\newcounter{theo}
\newtheorem{lmm}[theo]{Lemma}
\newtheorem{thr}{Theorem}
\newtheorem{stm}[theo]{Statement}
\newtheorem{crl}[theo]{Corollary}
\newtheorem{prp}[theo]{Proposition}
\newtheorem{defn}[theo]{Definition}
\newtheorem{rmrk}[theo]{Remark}
\newtheorem{thr_old}[theo]{Theorem}
\DeclareMathOperator{\arccosh}{arccosh}
\newcommand{\R}{\mathbb{R}}
\newcommand{\Compl}{\mathbb{C}}
\newcommand{\T}{\mathbb{T}}
\newcommand{\D}{\mathbb{D}}
\newcommand{\Prob}{\mathbb{P}}
\newcommand{\eps}{\varepsilon}
\newcommand\CnjCl[1]{#1}
\newcommand{\E}{\mathbb{E}}
\newcommand{\Diff}{\mathrm{Diff}}
\newcommand{\DiffG}{\widetilde{\mathrm{Diff}}}
\newcommand{\SL}{\mathrm{PSL}}
\newcommand{\measPin}[3]{\mathscr{L}_{#1}^{#2, #3}}
\newcommand{\measN}[2]{\measPin{#1}{#2}{0}}
\newcommand{\FMeas}[1]{\mathscr{M}_{#1}}
\newcommand{\FMeasSL}[1]{\widetilde{\mathscr{M}}_{#1}}
\newcommand{\TechMeas}{\mathcal{P}}
\newcommand{\Schw}{\mathcal{S}}
\newcommand{\A}{\mathsf{P}}
\newcommand{\B}{\mathsf{Q}}
\newcommand{\sh}[2]{\mathsf{R}^{#2} #1}
\renewcommand*{\d}{\mathop{}\!\mathrm{d}}
\newcommand{\WS}[3]{\mathcal{B}_{#1}^{\, #2, #3}} %
\newcommand{\Cfree}{C_{0,\text{free}}}
\newcommand{\Dom}{\Omega}
\newcommand{\Real}{\Re}
\newcommand{\He}{\mathrm{H}}
\def\obs#1#2#3{\def\temp@expr{\big(#1; #2, #3\big)}\mathcal{O}\obs@}
\def\obs@{%
	\@ifnextchar{_}{\obs@sub}{
	\@ifnextchar{^}{\obs@sup}{\temp@expr}}}
\def\obs@sub#1#2{_{#2}\obs@}
\def\obs@sup#1#2{^{\, #2}\obs@}
\newcommand{\fcolor}{blue}
\title{Probabilistic Correlation Functions\\
 of the Schwarzian Field Theory}
\author{Ilya Losev\footnote{Mathematical Institute, University of Oxford, Andrew Wiles Building, Radcliffe Observatory Quarter, Woodstock Road, Oxford, OX2 6GG, UK.
E-mail: \url{ilya.losev@maths.ox.ac.uk}.}}
\date{May 25, 2026}
\begin{document}

\maketitle

\begingroup
\renewcommand\thefootnote{}
\footnotetext{2020 Mathematics Subject Classification: 60B05, 81T08.}
\addtocounter{footnote}{-1}
\endgroup

\abstract{
We study correlation functions of the probabilistic Schwarzian Field Theory.
We compute cross-ratio correlation functions exactly in the case when the corresponding Wilson lines do not intersect, confirming predictions made in the physics literature via limit of the conformal bootstrap and the DOZZ formula. 
Moreover, we prove that these correlation functions characterise the measure uniquely.
We use them to define and compute the stress-energy tensor correlation functions,
and demonstrate, in particular, that these agree with the results obtained earlier by formal differentiation of the partition function.
}

\section{Introduction and main results}
\label{sect_intro_main_results}
\subsection{Introduction}
Recently, the Schwarzian Field Theory has attracted a lot of attention in the physics literature. 
It appears in the study of the AdS/CFT correspondence as a holographic dual of Jackiw–Teitelboim (JT) gravity in the disk \citep{SaadShenkerStanford2019, NearlyAdS, JT_Wilson_Line}.
The Schwarzian Field Theory also emerges in the low energy limit of the Sachdev–Ye–Kitaev (SYK) random matrix model (e.g. see \citep{MaldacenaStanford} and \citep{KitaevJosephine}), and is related to Liouville CFT, infinite dimensional symplectic geometry, representation theory of the Virasoro algebra and 2D Yang-Mills.

In our companion paper \citep*{BLW} we have defined and proved uniqueness and existence of a finite measure on $\Diff^1(\T)/\SL(2, \R)$, which corresponds to the Schwarzian Field Theory. 
The existence part follows the plan proposed in physics in \citep{BelokurovShavgulidzeExactSolutionSchwarz, BelokurovShavgulidzeCorrelationFunctionsSchwarz}.
We have also rigorously computed its partition function (i.e. total mass) using methods of stochastic analysis.
The obtained result for the partition function agrees with the formula derived non-rigorously in \citep{StanfordWittenFermionicLocalization} using a formal application of the Duistermaat–Heckman theorem on the infinite dimensional symplectic space $\Diff^1(\T)/\SL(2, \R)$.

In this paper we continue the study of this measure 
and rigorously compute a natural class of correlation functions of cross-ratio observables.
These correlation functions were originally formally derived in physics in \citep{ConformalBootstrap} by taking a $c\to \infty$ limit of two dimensional Liouville CFT and, in particular, a limit of the DOZZ formula.
Our results match the formulae obtained in \citep{ConformalBootstrap}. 
Moreover, we prove that the computed correlation functions determine the measure uniquely, which further confirms that the measure defined in \citep*{BLW} corresponds to the Schwarzian Field Theory studied in the physics literature.
This uniqueness theorem complements characterisation of the measure via change of variables formula which was obtained in \citep*{BLW}.
In addition, we show that using these observables we can make sense of and compute the stress-energy tensor correlation functions, using a procedure similar to one proposed in \citep{ConformalBootstrap}. 
We also prove that the stress-energy tensor correlation functions calculated this way agree with values obtained by differentiation of the partition function, as proposed in \citep{StanfordWittenFermionicLocalization} and carried out in \citep*{BLW}. 
This, in particular, proves a conjecture from \citep{ConformalBootstrap}.

\medskip

Formally, the measure corresponding to the Schwarzian Field Theory is supported on the topological space $\Diff^1(\T)/\SL(2, \R)$ with density (see \citep[(1.1)]{StanfordWittenFermionicLocalization})
\begin{equation}\label{eq:1}
\d\FMeas{\sigma^2}\big(\phi\big) = 
\exp\left\{+\frac{1}{\sigma^2}\int_{\T} 
\left[\Schw_\phi(\tau)+2\pi^2\phi'^{\, 2}(\tau)\right] \d\tau \right\}
\frac{\prod_{\tau \in \T}\frac{\d\phi(\tau)}{\phi'(\tau)}}{\SL(2, \R)} ,
\end{equation} 
where $\Schw_{\phi}(\tau)$ is the Schwarzian derivative of $\phi$ given by
\begin{equation}
\Schw_{\phi}(\tau) = \Schw(\phi, \tau) = \left(\frac{\phi''(\tau)}{\phi'(\tau)}\right)'- \frac{1}{2} \left(\frac{\phi''(\tau)}{\phi'(\tau)}\right)^2.
\end{equation}
Here, $\T = [0, 1]/\{0\sim 1\}$ is the unit circle, $\Diff^1(\T)$ is the space of $C^1$ orientation preserving diffeomorphisms of $\T$, and $\SL(2, \R)$ is the group of M\"{o}bius transformations of the unit disk (i.e. conformal isomorphisms of the unit disk) restricted to the boundary which is identified with $\T$.
The group $\SL(2, \R)$ acts on $\Diff^1(\T)$ by post-compositions. 
Following \citep{StanfordWittenFermionicLocalization} we call it a right action, since in \citep{StanfordWittenFermionicLocalization} it is interpreted as an action on the inverse elements.
We denote the quotient of $\Diff^1(\T)$ by this action of $\SL(2, \R)$ by $\Diff^1(\T)/\SL(2, \R)$, with the topology inherited from $C^1$ topology on $\Diff^1(\T)$. Heuristically, the formal density \eqref{eq:1} only depends on the orbit of this action and the quotient by $\SL(2,\R)$ therefore makes sense.
In Section \ref{sectMeasConstructAndProp} we recall the rigorous construction of the measure corresponding to the Schwarzian Field Theory, carried out in \citep*{BLW}.

In our investigation of correlation functions we consider bi-local observables given by cross-ratios 
\begin{equation}\label{eqDefObs}
\obs{\phi}{s}{t} = \frac{\pi\sqrt{\phi'(t)\phi'(s)}}{\sin\big(\pi\big[\phi(t)-\phi(s)\big]\big)}, \qquad s,t\in \T.
\end{equation}
These observables are defined for $\phi\in \Diff^1(\T)$ and are invariant under the $\SL(2, \R)$ action, see Proposition \ref{prpObsSLInvar}. 
Therefore, these cross-ratios naturally induce well-defined observables on $\Diff^1(\T)/\SL(2, \R)$, for which we use the same notation.
Correlations of these observables are predicted to correspond to boundary-anchored Wilson line correlators in the JT gravity \citep{Schwarzian_Wilson_Line}.
It is also predicted that these correlations describe the boundary-to-boundary propagator of a massive particle in the metric formulation of JT gravity \citep{JT_Wilson_Line}.
These observables are also known to be related to the Green's function of the SYK model \citep{Bagrets_SYK_as_LQG, SSS_ramp_SYK}.

We rigorously compute cross-ratio correlation functions in the case when Wilson lines (lines connecting $s$ and $t$ on the circle) do not intersect, see Figure \ref{fig:observables_diagram_def_fourier}. 
In this case we call a set of observables \textit{non-interlaced}, see Section \ref{sect_Diagram_circ}.
Moreover, we prove that the computed correlation functions characterise the measure uniquely.
Our methods rely on stochastic analysis and, as far as we know, differ from the methods existing in the physics literature.
In particular, we prove our results directly, without relying on the two dimensional Liouville Theory, and the DOZZ formula, which was rigorously proved  only recently in \citep{DOZZ}.

Furthermore, these observables are related to the stress-energy tensor of the Schwarzian Field Theory. 
Formally, the stress-energy tensor is given by the Schwarzian derivative $\Schw\big(\tan (\pi \phi), \cdot\big) = \Schw_\phi(\tau)+2\pi^2\phi'^{\, 2}(\tau)$, which is connected to observables \eqref{eqDefObs}, at least for sufficiently smooth $\phi$, by
\begin{equation}
\left(\frac{\pi\sqrt{\phi'(t)\phi'(s)}}{\sin\big(\pi\big[\phi(t)-\phi(s)\big]\big)} \right)^2
= (t-s)^{-2}+
\frac{1}{6}\, \Schw\big(\tan (\pi \phi), \tau \big)
+o(1), \qquad 
\text{as } t-s \to 0.
\end{equation}
Even though regularity of diffeomorphisms under $\d\FMeas{\sigma^2}$ is not sufficient for the Schwarzian derivative to exist, this relation can be used on the level of correlation functions. 
In Section \ref{sect_StressEnergy} we show that the corresponding limits of correlation functions exist, and we compute them explicitly.
We show that this limit coincides with formulae obtained by differentiating the partition function in \citep*{BLW}, in particular confirming the conjecture from \citep{ConformalBootstrap}, which was checked numerically there.

\subsection{Diagrammatic representation on $\T$}
\label{sect_Diagram_circ}
In order to formulate the main results we develop a diagrammatic language.
To a given set of observables we associate a diagram in a disk, where every observable is represented with a chord.

\medskip

Let $\big\{\obs{\phi}{s_{j}}{t_j}\big\}_{j=1}^{N}$ be a set of observables. 
Given this set we draw a unit circle and all the points $\{s_j\}_{j=1}^N$ and $\{t_j\}_{j=1}^N$ on it. 
For all $1\leq j \leq N$ we connect the point $s_j$ with the point $t_j$ with a line segment.
These line segments also correspond to Wilson lines, see \citep{Schwarzian_Wilson_Line, JT_Wilson_Line}.

\begin{defn}
We say that a set of observables $\big\{\obs{\phi}{s_{j}}{t_j}\big\}_{j=1}^{N}$ is non-interlaced if interiors of all drawn chords (Wilson lines) in the corresponding diagram are pairwise non-intersecting.
\end{defn}
Figure \ref{fig:interlacement} gives examples of graphic representations of two observables being interlaced and non-interlaced.

\begin{figure}
\centering
\begin{subfigure}[b]{0.3\textwidth}
\centering
\begin{tikzpicture}
  \draw (0,0) circle (1.3);
	\node[shape=circle,fill=black, scale=0.5,label={90:$s_p$}] (ns1) at (90:1.3) {};
	\node[shape=circle,fill=black, scale=0.5,label={180:$t_p$}] (nt1) at (180:1.3) {};
	\node[shape=circle,fill=black, scale=0.5,label={270:$s_q$}] (ns2) at (270:1.3) {};
	\node[shape=circle,fill=black, scale=0.5,label={360:$t_q$}] (nt2) at (360:1.3) {};
  \draw (ns1) -- (nt1);
  \draw (ns2) -- (nt2);
\end{tikzpicture}
\caption{Non-interlaced pair}
\end{subfigure}
\begin{subfigure}[b]{0.3\textwidth}
\centering
\begin{tikzpicture}
  \draw (0,0) circle (1.3);
	\node[shape=circle,fill=black, scale=0.5,label={90:$s_p=s_q$}] (ns1) at (90:1.3) {};
	\node[shape=circle,fill=black, scale=0.5,label={210:$t_p$}] (nt1) at (210:1.3) {};
	\node[shape=circle,fill=black, scale=0.5,label={330:$t_q$}] (nt2) at (330:1.3) {};
	\node[label={270:$\,_{\,}$}] (ns5) at (270:1.45) {};
  \draw (ns1) -- (nt1);
  \draw (ns1) -- (nt2);
\end{tikzpicture}
\caption{Non-interlaced pair}
\end{subfigure}
\begin{subfigure}[b]{0.3\textwidth}
\centering
\begin{tikzpicture}
  \draw (0,0) circle (1.3);
	\node[shape=circle,fill=black, scale=0.5,label={90:$s_p$}] (ns1) at (90:1.3) {};
	\node[shape=circle,fill=black, scale=0.5,label={270:$t_p$}] (nt1) at (270:1.3) {};
	\node[shape=circle,fill=black, scale=0.5,label={180:$s_q$}] (ns2) at (180:1.3) {};
	\node[shape=circle,fill=black, scale=0.5,label={0:$t_q$}] (nt2) at (0:1.3) {};
  \draw (ns1) -- (nt1);
  \draw (ns2) -- (nt2);
\end{tikzpicture}
\caption{Interlaced pair}
\end{subfigure}

\caption{An example of two observables.}\label{fig:interlacement}
\end{figure}

Let $\big\{\obs{\phi}{s_{j}}{t_j}\big\}_{j=1}^{N}$ be a set of non-interlaced observables. 
The $N$ drawn chords on the corresponding diagram divide the unit disk into $N+1$ connected domains.
We number them with integers from $1$ to $N+1$.
For each $m \in \{1, \ldots N+1\}$ we associate a Fourier variable $k_m$ to domain number $m$. We also let $\tau_m$ be the total length of all arcs (parts of the initial circle) which form the boundary of the $m$-th domain (see Figure \ref{fig:observables_diagram_def_fourier}).

For each $j \in \{1, \ldots N\}$ we define $w_1(j)$ and $w_2(j)$ to be the Fourier variables corresponding to the domains that contain the line segment connecting $s_j$ with $t_j$ in their boundaries (see Figure \ref{fig:observables_diagram_obs_fourier}). All formulae will be symmetric in $w_1$ and $w_2$, so the exact order does not matter.

\begin{figure}[tbh]\centering
\begin{subfigure}[t]{0.55\textwidth}
\centering
\begin{tikzpicture}
\newcommand\radius{3}
  \draw (0,0) circle (\radius);
	\node[shape=circle,fill=black, scale=0.5,label={10:$s_1$}] (ns1) at (-20:\radius) {};
	\node[shape=circle,fill=black, scale=0.5,label={60:$t_1$}] (nt1) at (60:\radius) {};
	\node[shape=circle,fill=black, scale=0.5,label={70:$s_2$}] (ns2) at (70:\radius) {};
	\node[shape=circle,fill=black, scale=0.5,label={155:$t_2$}] (nt2) at (155:\radius) {};
	\node[shape=circle,fill=black, scale=0.5,label={-35:$s_3$}] (ns3) at (-35:\radius) {};
	\node[shape=circle,fill=black, scale=0.5,label={190:$t_3=s_4$}] (nt3) at (190:\radius) {};
	\node[shape=circle,fill=black, scale=0.5,label={-70:$t_4$}] (nt4) at (-70:\radius) {};
  \draw (ns1) -- (nt1);
  \draw (ns2) -- (nt2);
  \draw (ns3) -- (nt3);
  \draw (nt3) -- (nt4);
  \node[label={20:$\color{\fcolor} k_1$}] (nf1) at (20:0.72*\radius) {};
  \node[label={45+22.5:$\color{\fcolor} k_2$}] (nf2) at (45+22.5:0.09*\radius) {};
  \node[label={90+22.5:$\color{\fcolor} k_3$}] (nf3) at (90+22.5:0.72*\radius) {};
  \node[label={280:$\color{\fcolor} k_4$}] (nf5) at (280:0.5*\radius) {};
  \node[label={225+22.5:$\color{\fcolor} k_5$}] (nf6) at (225+22.5:0.65*\radius) {};
\end{tikzpicture}
\caption{An example of a diagram. Here,\\ 
$\tau_1 = (t_1-s_1)$, \\
$\tau_2 = (s_1-s_3)+(s_2-t_1)+(t_3-t_2)$, \\
$\tau_3 = (t_2-s_2)$, \\
$\tau_4 = (s_3-t_4)$,\\ 
$\tau_5 = (t_4-s_4)$.
}\label{fig:observables_diagram_def_fourier}
\end{subfigure}
\hspace{0.05\textwidth}
\begin{subfigure}[t]{0.36\textwidth}
\centering
\begin{tikzpicture}
\newcommand\radius{2}
  \draw [domain=60:120] plot ({\radius*cos(\x)},{\radius*sin(\x)});
  \draw [domain=240:300] plot ({\radius*cos(\x)},{\radius*sin(\x)});
  \node[shape=circle,fill=black, scale=0.5,label={90:$s_j$}] (ns1) at (90:\radius) {};

	\node[shape=circle,fill=black, scale=0.5,label={270:$t_j$}] (nt1) at (270:\radius) {};
	\node[label={0:$\color{\fcolor} w_2(j)$}] (nf1) at (0:0.05*\radius) {};
	\node[label={180:$\color{\fcolor} w_1(j)$}] (nf2) at (180:0.05*\radius) {};
	
  \draw (ns1) -- (nt1);

\end{tikzpicture}
\caption{We use $w_1(j)$ and $w_2(j)$ to denote the Fourier variables $k_m$ corresponding to the domains that lie on both sides of the line segment that connects $s_j$ with $t_j$. \\
In Figure \ref{fig:observables_diagram_def_fourier}, for example, $w_1(3) = k_2$ and $w_2(3) = k_4$ (or vice versa).}
\label{fig:observables_diagram_obs_fourier}
\end{subfigure}
\caption{A diagrammatic representation of Fourier variables.}
\end{figure}

\begin{defn}
For all $l, k, w \in \R$ we define $\Gamma(l\pm ik\pm iw)$ as
\begin{equation}
\Gamma(l\pm ik\pm iw) := 
\Gamma(l + ik + iw)  \Gamma(l + ik - iw)  \Gamma(l - ik + iw)  \Gamma(l - ik - iw) .
\end{equation}
\end{defn}
\begin{rmrk}
For any $l>0$, the function $\Gamma(l\pm ik\pm iw) $ is non-negative, since  $\Gamma(z)\Gamma(\bar{z}) = |\Gamma(z)|^2$.
\end{rmrk}

\begin{rmrk}\label{rmrkGammaFormula}
It is well-known (see e.g. \citep[8.332, 8.331.1]{GradshteynRyzhik}) that for all positive integers $n$ and $x\in \R$,
\begin{align}
\left|\Gamma(1+n+ix)\right|^2 &= \frac{\pi x}{\sinh (\pi x)}\prod_{k=1}^n\left(k^2+x^2\right),\\
\left|\Gamma\left(\tfrac{1}{2}+n+ix\right)\right|^2 &= \frac{\pi }{\cosh (\pi x)}\prod_{k=1}^n\left(\left(k-\tfrac{1}{2}\right)^2+x^2\right).
\end{align}
\end{rmrk}

Using the diagrammatic language we can now formulate the main result about the correlation functions, confirming predictions from \citep{ConformalBootstrap}.
\begin{thr} \label{thrMainCorrelations}
For $N\geq 0$ let $\big\{\obs{\phi}{s_{j}}{t_j}\big\}_{j=1}^{N}$ be a set of non-interlaced observables, and $\{l_j\}_{j=1}^N$ be a set of positive integers. 
Let also $\{k_m\}_{m=1}^{N+1}$ and $\{\tau_m\}_{m=1}^{N+1}$ be as above.
Then 
\begin{multline}
\int\limits_{{\Diff^1(\T)/\SL(2, \R)}} \prod_{j=1}^N \obs{\phi}{s_j}{t_j}^{l_j}\d\FMeas{\sigma^2}(\CnjCl{\phi})
=
\int_{\R_+^{N+1}} 
\prod_{j=1}^N \frac{\Gamma\left(\dfrac{l_j}{2}\pm i w_1(j) \pm i w_2(j)\right)}{2\pi^2 \, \Gamma(l_j)}
\cdot\left(\frac{\sigma^2}{2}\right)^{l_j}\\
\times
\prod_{m=1}^{N+1}\exp\left(-\frac{\tau_m \sigma^2}{2}\cdot k_m^2\right)  \sinh(2\pi k_m) \, 2 k_m \d k_m,
\end{multline}
where the integral on the right-hand side converges absolutely.
\end{thr}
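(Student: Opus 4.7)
The idea is to reduce the correlation function to a convolution of one-dimensional ``heat-kernel'' integrals and ``vertex'' factors, exploiting the planar tree structure of a non-interlaced diagram. I would proceed in four steps, using the explicit construction of $\FMeas{\sigma^2}$ from \citep*{BLW} as the starting point. That construction realises the quotient measure as a Brownian-motion-type object, and I would first recast it in a form where the ``boundary data'' on an arc of length $\tau$ is governed by a one-parameter semigroup whose spectral decomposition produces the Plancherel weight $\sinh(2\pi k) \, 2k \d k$ and the Gaussian damping $e^{-\sigma^2 \tau k^2/2}$. The case $N=0$ is then exactly the partition-function identity proved in \citep*{BLW}, and serves as the base case of the induction.

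The key non-trivial input is the case $N=1$ of a single cross-ratio $\obs{\phi}{s}{t}^l$. Cutting the circle along the chord $\overline{s\,t}$ splits $\T$ into two arcs of lengths $\tau_1,\tau_2$ and the heat-kernel picture gives two spectral variables $k_1,k_2$. The substance of Step~2 is to prove the identity
\begin{equation*}
\int \obs{\phi}{s}{t}^{l}\,\d\FMeas{\sigma^2}(\phi) \;=\;
\int_{\R_+^2} \frac{\Gamma\!\left(\tfrac{l}{2}\pm ik_1 \pm ik_2\right)}{2\pi^2\,\Gamma(l)}\left(\tfrac{\sigma^2}{2}\right)^l
\prod_{m=1}^2 e^{-\sigma^2 \tau_m k_m^2/2}\sinh(2\pi k_m)\,2k_m\d k_m,
\end{equation*}
which identifies the $l$-th power of the cross-ratio with the integral operator on the spectral side whose kernel is $\Gamma\!\bigl(\tfrac{l}{2}\pm ik_1\pm ik_2\bigr)/(2\pi^2\Gamma(l))$. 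I would derive this by inserting a resolution of the identity via the Plancherel decomposition, expressing $\obs{\phi}{s}{t}^{l}$ through the underlying stochastic process of \citep*{BLW}, and reducing the resulting matrix element to a standard beta-type integral (essentially the $\SL(2,\R)$ intertwiner computation behind the vertex factors in the $c\to\infty$ limit of DOZZ).

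For general $N$ the non-interlacement hypothesis means that the dual graph of the diagram (vertices $=$ the $N+1$ regions, edges $=$ chords) is a tree. I would induct on $N$ by peeling off a leaf of this tree, i.e.\ a region bounded by a single chord of the diagram together with arcs of $\T$. A Markov-type property of the construction from \citep*{BLW}, combined with the $\SL(2,\R)$-invariance of $\obs{\phi}{s}{t}$ (Proposition~\ref{prpObsSLInvar}), shows that conditionally on the ``spectral data'' at that chord the two sides of the cut decouple: the leaf region contributes a heat-kernel factor $e^{-\sigma^2\tau_m k_m^2/2}\sinh(2\pi k_m)\,2k_m\d k_m$, the cut chord contributes the vertex factor $\Gamma\!\bigl(\tfrac{l_j}{2}\pm iw_1(j)\pm iw_2(j)\bigr)/(2\pi^2\Gamma(l_j))(\sigma^2/2)^{l_j}$ by Step~2, and the remaining diagram has one fewer chord and satisfies the inductive hypothesis.

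Finally, absolute convergence follows from Stirling's formula together with Remark~\ref{rmrkGammaFormula}: for $l\ge 1$ fixed, $\Gamma\!\bigl(\tfrac{l}{2}\pm ik\pm iw\bigr)$ decays like $e^{-\pi(|k|+|w|)}$ times polynomials in each argument, which exactly compensates the exponential growth of $\sinh(2\pi k_m)$ and leaves Gaussian tails in $k_m$. The main obstacle is Step~2: probabilistically identifying the cross-ratio observable with the explicit ``vertex'' kernel requires reconciling the very singular $\sin^{-l}$ factor as $s\to t$ with the limited regularity of $\phi$ under $\FMeas{\sigma^2}$, and then performing the spectral computation cleanly without appealing to the DOZZ formula. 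A secondary, more technical difficulty is to formulate and verify in Step~3 the precise Markov property of the quotient measure under cutting along an arbitrary chord, where the $\SL(2,\R)$-quotient makes the ``spectral data at a chord'' the correct object to condition on rather than pointwise values of $\phi$.
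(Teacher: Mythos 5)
Your plan reproduces the transfer-matrix heuristic from the physics literature, but the two steps you yourself flag as obstacles are genuine gaps, and the tools you propose for them do not exist in a usable form. Step 1/Step 2 presuppose that the measure constructed in \citep*{BLW} admits a rigorous semigroup/Plancherel decomposition in which arcs contribute $e^{-\sigma^2\tau k^2/2}\sinh(2\pi k)\,2k\,\d k$ and the cross-ratio acts as an explicit integral kernel; no such spectral resolution is available for $\FMeas{\sigma^2}$, and establishing it is essentially equivalent in difficulty to the theorem itself, so the plan is circular at its core. The paper obtains the vertex factor by a different mechanism: after regularising, $\FMeas{\sigma^2}$ is reached as the $\alpha\to\pi-$ limit of $\frac{4\pi(\pi-\alpha)}{\sigma^2}\meas{\sigma^2}{\alpha}$ (Propositions \ref{prpIntegralRegularization} and \ref{prpMainExpectObservRegular}), the weight $\exp\{2\alpha^2\sigma^{-2}\int\phi'^{\,2}\}$ is removed by the Girsanov-type Lemma \ref{crlBBMMeasureChange} (Proposition \ref{prpObsWeightChange}), and all computations are then done under the plain unnormalised Brownian bridge $\WS{\sigma^2}{a}{T}$. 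There, Lemma \ref{lmmObsExpMomentInsert} trades an exponential insertion $\exp\{z\sigma^{-2}\,\obs{\B_{\xi}}{0}{T}_0\}$ for the boundary shift $a\mapsto 2\arccosh[\cosh(a/2)-z/4]$, and the Fourier identity of Proposition \ref{prpKeyArccoshExpansion} expands $\cos(bk)$ in powers of $z$ with coefficients exactly $\Gamma(\tfrac{l}{2}\pm ik\pm iw)/(2\pi^2\Gamma(l))$; extracting the coefficient of $z^{l}$ is what produces the Gamma-vertex, with no appeal to any spectral theorem or DOZZ-type intertwiner computation.

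Your Step 3 has the same problem: a ``Markov property of the quotient measure under cutting along a chord'' is not something $\FMeas{\sigma^2}$ possesses in any directly usable sense --- the weight $\exp\{2\pi^2\sigma^{-2}\int\phi'^{\,2}\}$ is non-local over the circle, the $\SL(2,\R)$ orbit has infinite Haar volume, and ``conditioning on spectral data at a chord'' is not a defined operation for this measure. The paper's induction (proof of Proposition \ref{prpLineExpObsFormula}) does peel off chords much as you suggest, but only after the problem has been transported to an interval and to $\alpha=0$, where the genuine convolution property of the Brownian bridge (Proposition \ref{prop:BBcomposition}) applies; moreover, the chord whose endpoints are the interval endpoints cannot be handled by splitting at all and requires the separate generating-function argument (Case 1), for which your tree-peeling scheme has no analogue. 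Your Step 4 (absolute convergence from the exponential decay of $\Gamma(\tfrac{l}{2}\pm ik\pm iw)$ against $\sinh(2\pi k)$) is correct and matches the paper. So the skeleton --- base case $N=0$, one Gamma-vertex per chord, induction on the planar tree, convergence by Gamma decay --- has the right shape, but the regularisation-plus-Girsanov reduction and the arccosh/Fourier generating-function identity, which constitute the actual content of the proof, are missing from your proposal.
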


\begin{rmrk}
Taking $N=0$ gives a formula for the total mass of the measure $\d\FMeas{\sigma^2}$, which agrees with the result first proposed in \citep{StanfordWittenFermionicLocalization} and obtained rigorously in \citep*{BLW}.
\end{rmrk}

\begin{rmrk}
This is also true for $l_j=0$ if we interpret 
\begin{equation}
\frac{\Gamma\big(\pm i k \pm i w\big)}{2\pi^2\, \Gamma(0)}:=\frac{\delta(k-w)}{\sinh(2\pi k)\, 2 k },
\end{equation}
where $\delta(\cdot)$ is the delta function.
\end{rmrk}

As an immediate corollary we obtain formulae for moments of the observables.
\begin{crl}\label{crlMainObsMoments}
For positive integers $l$ the moments of observables are given by
\begin{multline}
\int \obs{\phi}{s}{t}^{l}\d\FMeas{\sigma^2}(\CnjCl{\phi}) 
= 
\int_{\R_+^{2}} 
\frac{\Gamma\big(\frac{l}{2} \pm i k_1 \pm i k_2\big)}{2\pi^2\, \Gamma(l)}\cdot\left(\frac{\sigma^2}{2}\right)^l
\\
\times
\exp\left(-\frac{(t-s)\sigma^2}{2}\cdot k_1^2 -\frac{\big(1-(t-s)\big)\sigma^2}{2}\cdot k_2^2\right)  
\sinh(2\pi k_1)\, 2 k_1 \sinh(2\pi k_2)\, 2 k_2 \d k_1 \d k_2.
\end{multline}
\end{crl}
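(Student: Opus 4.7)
The plan is to deduce Corollary \ref{crlMainObsMoments} as a direct specialization of Theorem \ref{thrMainCorrelations} to the case $N=1$, so this amounts to unpacking the diagrammatic data when only one chord is drawn.

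First, I would fix the single observable $\obs{\phi}{s}{t}$ and draw the corresponding diagram on the unit circle: it consists of just two marked points $s$ and $t$ joined by a single chord. Since $N=1$, this chord is trivially non-interlaced with itself (the set of chords has empty pairwise interior intersections), so Theorem \ref{thrMainCorrelations} applies. The chord divides the unit disk into exactly $N+1 = 2$ connected domains, whose arc boundaries on $\T$ have total lengths $\tau_1 = t-s$ and $\tau_2 = 1-(t-s)$ respectively. Labelling the associated Fourier variables $k_1$ and $k_2$, the two domains lie on opposite sides of the unique chord, so $\{w_1(1), w_2(1)\} = \{k_1, k_2\}$; since the integrand in the theorem depends on $w_1(j), w_2(j)$ only through the symmetric combination $\Gamma\left(\tfrac{l_j}{2}\pm i w_1(j)\pm i w_2(j)\right)$, the choice of labelling is immaterial.

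Substituting $N=1$, $l_1 = l$, together with the above values of $\tau_1, \tau_2, w_1(1), w_2(1)$, into the formula of Theorem \ref{thrMainCorrelations} yields precisely the right-hand side of Corollary \ref{crlMainObsMoments}, with the product over $m \in \{1,2\}$ giving the two Gaussian-type factors $\exp(-\tfrac{(t-s)\sigma^2}{2} k_1^2)$ and $\exp(-\tfrac{(1-(t-s))\sigma^2}{2} k_2^2)$ multiplied by the Plancherel-type factors $\sinh(2\pi k_m)\, 2 k_m$. Absolute convergence of the double integral is inherited from the absolute convergence statement of Theorem \ref{thrMainCorrelations}. There is no genuine obstacle here beyond carefully matching the diagrammatic notation, so the proof is complete once the specialization is written out.
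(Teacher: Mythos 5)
Your specialization of Theorem \ref{thrMainCorrelations} to $N=1$, with the single chord producing two domains of arc lengths $t-s$ and $1-(t-s)$ and $\{w_1(1),w_2(1)\}=\{k_1,k_2\}$, is exactly how the paper obtains this statement, which it presents as an immediate corollary of that theorem. The proposal is correct and matches the paper's approach.
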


As another useful corollary we show that all observables have exponential moments.
\begin{prp}\label{prpExpMoment}
For any $\sigma>0$ and any $s\neq t \in \T$, 
\begin{equation}
\int  \exp\left\{ \frac{8}{\sigma^2}\, \obs{\phi}{s}{t} \right\} \d\FMeas{\sigma^2}(\CnjCl{\phi}) < \infty.
\end{equation}
\end{prp}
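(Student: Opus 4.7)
My plan is to expand the exponential as a Taylor series in $\obs$ and control each moment via Corollary \ref{crlMainObsMoments}. Writing $\obs := \obs{\phi}{s}{t}$ and using that $\obs \geq 0$, the monotone convergence theorem gives
$$\int \exp\!\bigl(\tfrac{8}{\sigma^2}\obs\bigr)\,\d\FMeas{\sigma^2}(\phi) = \sum_{l=0}^{\infty}\frac{(8/\sigma^2)^l}{l!}\int \obs^l\,\d\FMeas{\sigma^2}(\phi).$$
By the corollary, the $l$-th moment equals $(\sigma^2/2)^l J_l/\Gamma(l)$, where
$$J_l := \frac{1}{2\pi^2}\int_{\R_+^2}\Gamma\bigl(\tfrac{l}{2}\pm ik_1\pm ik_2\bigr)\,e^{-\sigma^2(\tau_1 k_1^2+\tau_2 k_2^2)/2}\,\sinh(2\pi k_1)\,2k_1\sinh(2\pi k_2)\,2k_2\,\d k_1\,\d k_2,$$
with $\tau_1,\tau_2>0$, $\tau_1+\tau_2=1$. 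The series thus becomes $\sum_l 4^l J_l/(l!\,\Gamma(l))$, and everything reduces to bounding $J_l$ appropriately in $l$.

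The core estimate is the uniform inequality $\Gamma\bigl(\tfrac{l}{2}\pm ik_1\pm ik_2\bigr) \leq \Gamma(l/2)^4$ for all real $k_1,k_2$ and integers $l\geq 1$. Writing the left-hand side as $|\Gamma(l/2+i(k_1+k_2))|^2\cdot|\Gamma(l/2+i(k_1-k_2))|^2$ reduces this to the one-variable bound $|\Gamma(l/2+iy)|^2 \leq \Gamma(l/2)^2$. Using the explicit formulas of Remark \ref{rmrkGammaFormula} together with the Euler product identities $\sinh(\pi y)/(\pi y) = \prod_{j\geq 1}(1+y^2/j^2)$ and $\cosh(\pi y) = \prod_{j\geq 1}(1+y^2/(j-\tfrac12)^2)$, the ratio $\Gamma(l/2)^2/|\Gamma(l/2+iy)|^2$ is seen to be precisely the tail of such a positive infinite product, hence $\geq 1$. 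Substituting into $J_l$ yields
$$J_l \leq \Gamma(l/2)^4 \cdot C(\sigma,\tau_1,\tau_2),$$
where $C(\sigma,\tau_1,\tau_2)$ is the $l$-independent integral of the Gaussian against the $\sinh$ factors, finite because the Gaussian decay dominates the exponential growth of $\sinh(2\pi k_i)$.

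It remains to check that $\sum_l 4^l\,\Gamma(l/2)^4/(l!\,\Gamma(l)) < \infty$. Applying Legendre's duplication formula $\Gamma(l/2)\Gamma((l+1)/2) = 2^{1-l}\sqrt{\pi}\,\Gamma(l)$ recasts each summand as $16\pi^2\cdot 4^{-l}\,\Gamma(l)^2/\bigl(l\,\Gamma((l+1)/2)^4\bigr)$, and a routine Stirling computation shows this is $\Theta(1/l^2)$. Summability follows, finishing the proof.

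The principal obstacle is establishing the uniform Gamma bound, which the product identities above handle cleanly. It is worth noting that $8/\sigma^2$ is in fact the \emph{critical} coefficient for this strategy: with any larger constant the tail would grow geometrically instead of decaying like $1/l^2$, and the argument would break down exactly at this threshold.
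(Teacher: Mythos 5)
Your proof is correct, and it takes a genuinely different route from the paper's. Both arguments start from Corollary \ref{crlMainObsMoments} and the nonnegativity of $\obs{\phi}{s}{t}$ (so the series/integral exchange is legitimate), but the paper does not estimate the moments termwise: it bounds the $k_2$-factor $\exp\big(-\tfrac{(1-(t-s))\sigma^2}{2}k_2^2\big)\sinh(2\pi k_2)\,2k_2$ by its supremum and then resums the whole series over $l$ in closed form using Proposition \ref{prpKeyArccoshExpansion} with $\beta=0$, so that for $z=8\sin^2\tfrac{\alpha}{2}<8$ the sum collapses to $\cosh(2\alpha k)$ against the Gaussian, which is explicit, and the monotone limit $\alpha\to\pi$ finishes. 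You instead prove the uniform bound $\Gamma\big(\tfrac{l}{2}\pm ik_1\pm ik_2\big)\le\Gamma(l/2)^4$, correctly reduced via Remark \ref{rmrkGammaFormula} and the product identities to $|\Gamma(l/2+iy)|\le\Gamma(l/2)$ (the ratio is a tail of a product of factors $\ge 1$); this decouples the Gamma factor from the $(k_1,k_2)$-integral, which is finite precisely because $\tau_1=t-s$ and $\tau_2=1-(t-s)$ are both strictly positive, and your duplication-plus-Stirling computation of the summands $4^l\Gamma(l/2)^4/(l!\,\Gamma(l))$ is right (they behave like $8\pi/l^2$). Your route is more elementary -- it bypasses the Fourier expansion of Appendix \ref{sect_Fourier_Calc} entirely -- and it makes transparent why $8/\sigma^2$ is the threshold for this bound, while the paper's resummation yields a closed-form estimate and reuses machinery already needed elsewhere (e.g.\ Proposition \ref{prpLineExpObsWithMomentGenFuncFormula}). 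Two cosmetic points: treat the $l=0$ term separately (it is just the finite total mass; your formula with $\Gamma(l)$ in the denominator only applies for $l\ge 1$), and state explicitly that the corollary's formula is being applied for positive integers $l$ only.
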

\begin{rmrk}\label{rmrkExpOpt}
The constant $8/\sigma^2$ in the exponential is sharp.
\end{rmrk}

We demonstrate that the computed correlation functions determine the measure uniquely.
Recall that the topology of $\Diff^1(\T)/\SL(2,\R)$ is inherited from $C^1$ topology on $\Diff^1(\T)$.
\begin{thr}\label{thrMainUniq}
Let $\TechMeas$ be a Borel measure on $\Diff^1(\T)/\SL(2,\R)$. 
Suppose that 
for any $N\geq 0$, any set $\big\{\obs{\phi}{s_{j}}{t_j}\big\}_{j=1}^{N}$ of non-interlaced observables, and any set $\{l_j\}_{j=1}^N$ of positive integers we have 
\begin{equation}
\int \prod_{j=1}^N \obs{\phi}{s_j}{t_j}^{l_j} \d\TechMeas(\CnjCl{\phi})
=
\int \prod_{j=1}^N \obs{\phi}{s_j}{t_j}^{l_j} \d\FMeas{\sigma^2}(\CnjCl{\phi}).
\end{equation}
Then for any Borel set $A\subset \Diff^1(\T)/\SL(2, \R)$ we have
\begin{equation}
\TechMeas(A) = \FMeas{\sigma^2}(A).
\end{equation}
\end{thr}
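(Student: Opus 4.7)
The strategy combines moment-determinacy of joint distributions of non-interlaced observables (using Proposition~\ref{prpExpMoment}) with a separation-of-orbits argument, concluded via Dynkin's $\pi$-$\lambda$ theorem. I would fix a countable non-interlaced family $\mathcal{F}=\{(s_k,t_k)\}_{k\in\N}$ whose chords realize an ideal triangulation of the disk with vertices dense in $\T$ (for instance, the Farey tessellation). The first task is to show that $\mathcal{F}$ separates $\SL(2,\R)$-orbits: if $[\phi]\neq[\psi]$ then $\obs{\phi}{s_k}{t_k}\neq\obs{\psi}{s_k}{t_k}$ for some $k$. The key algebraic input is that, for any four vertices $a,b,c,d$ spanning two adjacent triangles $\{a,b,c\}$ and $\{a,c,d\}$ of $\mathcal{F}$, all five edge observables $\obs{\phi}{a}{b},\obs{\phi}{b}{c},\obs{\phi}{a}{c},\obs{\phi}{c}{d},\obs{\phi}{a}{d}$ belong to $\mathcal{F}$, and suitable monomials in them eliminate the $\sqrt{\phi'}$ factors and recover the M\"obius-invariant cross-ratio of $\phi(a),\phi(b),\phi(c),\phi(d)$. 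By $3$-transitivity of $\SL(2,\R)$ on $\T$, density of the vertex set, and continuity of $\phi\in\Diff^1(\T)$, these cross-ratios pin down $\phi$ modulo $\SL(2,\R)$.

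Second, for any finite subfamily $\mathcal{F}'\subset\mathcal{F}$ the hypothesis of the theorem equates all mixed polynomial moments of the vector $\bigl(\obs{\phi}{s_j}{t_j}\bigr)_{j\in\mathcal{F}'}$ under $\TechMeas$ and $\FMeas{\sigma^2}$; the $N=0$ case in particular shows $\TechMeas$ is a finite measure of the same total mass as $\FMeas{\sigma^2}$. Proposition~\ref{prpExpMoment} together with the positivity of the cross-ratio observables implies that the joint moment generating function of this vector under $\FMeas{\sigma^2}$ is finite in a neighbourhood of the origin. The multidimensional Cram\'er moment-determinacy criterion then yields that the pushforwards of $\TechMeas$ and $\FMeas{\sigma^2}$ under the evaluation map into $\R^{|\mathcal{F}'|}$ coincide.

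Third, let $\mathcal{C}$ be the class of cylinder sets $\{[\phi]:(\obs{\phi}{s_j}{t_j})_{j\in\mathcal{F}'}\in B\}$, with $\mathcal{F}'$ ranging over finite subfamilies of $\mathcal{F}$ and $B$ over Borel subsets of $\R^{|\mathcal{F}'|}$. Because $\mathcal{F}$ is itself non-interlaced, the union of any two finite subfamilies is again a finite non-interlaced subfamily of $\mathcal{F}$, so $\mathcal{C}$ is a $\pi$-system, and $\TechMeas$ and $\FMeas{\sigma^2}$ agree on it by the previous step. Using the Polish structure of $\Diff^1(\T)/\SL(2,\R)$, the continuity of each $\obs{\cdot}{s}{t}$, and the separation established in Step~1, $\sigma(\mathcal{C})$ coincides with the Borel $\sigma$-algebra on $\Diff^1(\T)/\SL(2,\R)$, and Dynkin's $\pi$-$\lambda$ theorem delivers $\TechMeas=\FMeas{\sigma^2}$.

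The technical heart of the argument is the separation step. Explicitly extracting the cross-ratio of $\phi(a),\phi(b),\phi(c),\phi(d)$ from the five adjacent-triangle observables, and then upgrading pointwise separation to a clean global injectivity statement modulo $\SL(2,\R)$, will be the delicate part; a parameter count ($5$ observables against the $4+4-3=5$ independent quantities in $\{\phi(a),\phi'(a),\ldots,\phi(d),\phi'(d)\}$ modulo $\SL(2,\R)$) suggests that the map is locally invertible, but performing the algebraic elimination explicitly and handling the global injectivity in a $\Diff^1$-setting will require real work.
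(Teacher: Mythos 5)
Your plan is viable and, once its deferred step is filled in, it would prove Theorem \ref{thrMainUniq}; but it takes a genuinely different and heavier route than the paper. The paper does not triangulate or invoke a Blackwell--Mackey-type generation argument: it gauge-fixes by passing to the explicit section $\DiffG^1(\T)=\{\phi\in\Diff^1(\T)\,:\,\phi(0)=0,\ \phi'(0)=1,\ \phi(1/2)=1/2\}$ of the quotient, takes the fan-plus-consecutive-chords family $\{\obs{\phi}{0}{t_j}\}\cup\{\obs{\phi}{t_j}{t_{j+1}}\}$ (which is non-interlaced), and uses the identity
\begin{equation}
\frac{\obs{\phi}{0}{t_j}\,\obs{\phi}{0}{t_{j+1}}}{\pi\,\obs{\phi}{t_j}{t_{j+1}}}
=\cot\big(\pi\phi(t_j)\big)-\cot\big(\pi\phi(t_{j+1})\big),
\end{equation}
together with $\cot\big(\pi\phi(t_M)\big)=0$ (forced by the normalisation $\phi(1/2)=1/2$), to recover every $\phi(t_j)$ by telescoping, and then every $\phi'(t_j)$ from $\obs{\phi}{0}{t_j}=\pi\sqrt{\phi'(t_j)}/\sin(\pi\phi(t_j))$, as explicit continuous functions of the observables. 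Equality of the joint laws of the observable vectors is then obtained exactly as in your second step (equal mixed moments plus exponential moments from Proposition \ref{prpExpMoment}, hence equal characteristic functions), the $N=0$ case matches total masses, and agreement on cylinder sets of the section finishes the proof. Your version buys the absence of any gauge fixing and a single countable separating family, at the price of the Farey/shear-coordinate separation argument and the standard-Borel machinery needed to see that $\sigma(\mathcal{C})$ is the full Borel $\sigma$-algebra.

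The one incomplete point is the separation step, which you rightly call the technical heart but leave unexecuted; it does close, and more easily than your final paragraph suggests. Around an interior edge $ac$ shared by triangles $abc$ and $acd$, the vertex-degree equations show that, up to powers, the \emph{only} monomial in the five edge observables cancelling all $\sqrt{\phi'}$ factors is $\obs{\phi}{a}{b}\,\obs{\phi}{c}{d}\,\big/\big(\obs{\phi}{b}{c}\,\obs{\phi}{a}{d}\big)$, whose value is $\sin\big(\pi[\phi(c)-\phi(b)]\big)\sin\big(\pi[\phi(d)-\phi(a)]\big)\big/\big(\sin\big(\pi[\phi(b)-\phi(a)]\big)\sin\big(\pi[\phi(d)-\phi(c)]\big)\big)$, i.e.\ the cross-ratio (shear coordinate) of the image quadruple; these shears along the Farey tessellation determine the vertex images up to M\"obius by reconstructing triangle by triangle. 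Consequently, if all observables agree for $\phi$ and $\psi$, then $\psi=M\circ\phi$ on the dense vertex set for some M\"obius $M$, hence everywhere by continuity, and then $[\phi]=[\psi]$ in $\Diff^1(\T)/\SL(2,\R)$ automatically --- no separate recovery of derivatives and no delicate ``global injectivity in a $\Diff^1$ setting'' is needed (though derivatives are in fact recoverable: products of squared observables over the three edges of a triangle yield the individual $\phi'$ values). Your parameter count is thus a red herring; either carry out the shear reconstruction as above, or simply adopt the paper's cotangent telescoping, which achieves the same recovery in one line.
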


\begin{rmrk}
In this work we only consider non-interlaced observables. 
It would be interesting to compute general correlation functions. 
It is expected that we get similar formulae, where every crossing contributes an extra factor, called $6j$-symbol, see \citep{ConformalBootstrap, Schwarzian_Wilson_Line, JT_Wilson_Line}.
Theorem \ref{thrMainUniq} tells us that, in principle, it should be possible to deduce this from the correlation functions already computed in Theorem \ref{thrMainCorrelations}.
\end{rmrk}
The theorem above further confirms that the measure defined and constructed in \citep*{BLW} coincides with the Schwarzian Field Theory studied in the physics literature.
This uniqueness theorem complements characterisation of the Schwarzian measure via change of variables formula established in \citep*{BLW}.

\subsection{Stress-energy tensor}\label{sect_StressEnergy}
In this section we make sense of the stress-energy tensor correlation functions studied in \citep{StanfordWittenFermionicLocalization}.
Formally, in Schwarzian Field Theory the stress-energy tensor is given by the Schwarzian derivative $\Schw\big(\tan (\pi \phi), \cdot\big)$. 
However, the Schwarzian derivative is a non-linear functional of the field which is apriori not well-defined on the support of $\d \FMeas{\sigma^2}$ due to lack of sufficient regularity (essentially, the measure is supported on diffeomorphisms of regularity $C^{3/2-}$).
Nevertheless, we show that it is possible to make sense of its correlation functions.

Our approach is based on the fact that, at least for sufficiently smooth $\phi$, we can express the stress-energy tensor $\Schw\big(\tan (\pi \phi), \cdot\big)$ using cross-ratio observables $\obs{\phi}{s}{t}$ by
\begin{equation}\label{eq_Schw_using_Obs}
\Schw\big(\tan (\pi \phi), \tau \big)
= 6\lim_{\substack{s\to \tau-\\
t\to \tau+}} 
\left(\obs{\phi}{s}{t}^2-(t-s)^{-2}\right).
\end{equation}
Even though the limit on the right-hand side might not exist on the support of $\d \FMeas{\sigma^2}$, we can \textit{define} correlation functions of the stress-energy tensor $\Schw\big(\tan (\pi \phi), \cdot \big)$ by taking the corresponding limit of the correlation functions of the expression appearing on the right-hand side of \eqref{eq_Schw_using_Obs}. 
However, in order to match the resulting correlation functions to formulae obtained by formal differentiation of the partition function in \citep{StanfordWittenFermionicLocalization} and \citep*{BLW} we need to add an extra shift by a constant. 
This is often expected when dealing with correlations of classically ill-posed composite fields. 

In the following Theorem we compute the joint correlation functions of cross-ratio observables and stress-energy tensors at non-coinciding points.  
\begin{thr}\label{thrStressEnergy}
Let $N\geq 0$, $M\geq 0$ be integers, $\big\{\obs{\phi}{s_{j}}{t_j}\big\}_{j=1}^{N}$ be a set of non-interlaced observables, $\left\{l_j\right\}_{j=1}^N$ be non-negative integers,
$\left\{r_p\right\}_{p=1}^{M}$  be distinct points on the unit circle $\T$, which are different from $\left\{s_j\right\}_{j=1}^N$ and $\left\{t_j\right\}_{j=1}^N$. 
For $t\in \T$ we write $k(t)$ for the Fourier variable which corresponds to the diagram domain whose boundary contains $t$.  
Then for any $\sigma>0$,
\begin{multline}
\lim_{\eps_1, \ldots \eps_M\to 0+}\int 6^M \prod_{p=1}^M \left(\obs{\phi}{r_p}{r_p+\eps_p}^2-\eps_p^{-2}-\frac{\sigma^4}{240}\right)
\prod_{j=1}^{N} \obs{\phi}{s_j}{t_{j}}^{l_j}
\d\FMeas{\sigma^2}(\CnjCl{\phi})\\ 
=
\int_{\R_+^{N+1}}\sigma^{4M} \prod_{p=1}^M k^2(r_p)
\prod_{j=1}^N \frac{\Gamma\left(\dfrac{l_j}{2}\pm i w_1(j) \pm i w_2(j)\right)}{2\pi^2 \, \Gamma(l_j)}
\cdot\left(\frac{\sigma^2}{2}\right)^{l_j}\\
\times
\prod_{m=1}^{N+1}\exp\left(-\frac{\tau_m \sigma^2}{2}\cdot k_m^2\right)  \sinh(2\pi k_m) \, 2 k_m \d k_m.
\end{multline}
\end{thr}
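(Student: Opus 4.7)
The plan is to apply Theorem \ref{thrMainCorrelations} to the integral with the additional observables $\obs{\phi}{r_p}{r_p+\eps_p}$ (which is legitimate for $\eps_p$ small enough, since the extended set of chords remains non-interlaced) and to take the limit $\eps_p\to 0+$ inside the resulting Fourier integral. Each new chord cuts off a thin lens from the domain $m_p$ that originally contained $r_p$, adding a new Fourier variable $\tilde k_p$ (with Gaussian weight $\exp(-\eps_p\sigma^2\tilde k_p^2/2)$) and shifting the arc length of domain $m_p$ by $-\eps_p$. Because the $\tilde k_p$-dependent factor involves only the $p$-th $\Gamma$-factor, the inner integrals over $\{\tilde k_p\}$ decouple completely; carrying them out turns the whole expression into an $(N+1)$-dimensional Fourier integral whose integrand contains, for each $p$, the factor
\[
F_\eps(k) \;:=\; e^{\eps\sigma^2 k^2/2}\cdot \frac{\sigma^4}{4}\int_0^\infty \frac{\Gamma(1\pm ik\pm i\tilde k)}{2\pi^2}\, e^{-\eps\sigma^2 \tilde k^2/2}\sinh(2\pi\tilde k)\cdot 2\tilde k\,d\tilde k,
\]
evaluated at $k=k_{m_p}$ (the prefactor $e^{\eps\sigma^2 k^2/2}$ absorbs the arc-length shift). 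The subtraction $\obs{\phi}{r_p}{r_p+\eps_p}^2 - \eps_p^{-2} - \sigma^4/240$ inside the expectation translates at the Fourier level into the subtraction of the same constant from $F_{\eps_p}(k_{m_p})$.

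The heart of the argument is the pointwise identity
\[
\lim_{\eps\to 0+}\,6\bigl[F_\eps(k) - \eps^{-2} - \sigma^4/240\bigr] = \sigma^4 k^2, \qquad k\geq 0.
\]
Combining Remark \ref{rmrkGammaFormula} (applied with $n=0$) with $\sinh A\sinh B=\tfrac12(\cosh(A+B)-\cosh(A-B))$ gives $\Gamma(1\pm ik\pm i\tilde k)=2\pi^2(\tilde k^2-k^2)/(\cosh(2\pi\tilde k)-\cosh(2\pi k))$, so the integrand in $F_\eps(k)$ rewrites as $(\sigma^4/4)\cdot 2\tilde k(\tilde k^2-k^2)\sinh(2\pi\tilde k)/(\cosh(2\pi\tilde k)-\cosh(2\pi k))$. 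Splitting off its large-$\tilde k$ polynomial part $P_k(\tilde k)=2\tilde k^3-2k^2\tilde k$ leaves a remainder $R_k(\tilde k)$ that decays exponentially; Gaussian integration of $P_k$ contributes $\eps^{-2}-k^2\sigma^2/(2\eps)$, and the $\eps^{-1}$-term cancels exactly against the cross term $\eps\sigma^2 k^2/2\cdot \eps^{-2}$ from the Taylor expansion of $e^{\eps\sigma^2 k^2/2}$. The remaining finite piece is $-\tfrac{3\sigma^4 k^4}{4}+\tfrac{3\sigma^4}{2}\int_0^\infty R_k(\tilde k)\,d\tilde k$, and an explicit evaluation of the integral --- e.g.\ via the expansion $\sinh(2\pi\tilde k)/(\cosh(2\pi\tilde k)-\cosh(2\pi k))=1+2\sum_{n\geq 1}\cosh(2\pi kn)e^{-2\pi n\tilde k}$ on $\tilde k>|k|$, a direct estimate on the bounded range, and the Bose--Einstein identities $\int_0^\infty\tilde k^{2m-1}/(e^{2\pi\tilde k}-1)\,d\tilde k=\zeta(2m)\Gamma(2m)/(2\pi)^{2m}$ --- yields the value $k^4/2+2k^2/3+1/60$. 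Thus the finite part equals $\sigma^4 k^2+\sigma^4/40$, and subtracting $6\sigma^4/240=\sigma^4/40$ leaves $\sigma^4 k^2$.

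To interchange the limit with the outer $(N+1)$-dimensional integration, one needs a uniform-in-$\eps$ polynomial bound $|6[F_\eps(k)-\eps^{-2}-\sigma^4/240]|\leq C(1+k^4)$, which can be read off from the explicit representation above. Combined with the Gaussian factors $\exp(-\tau_m\sigma^2 k_m^2/2)$ still present in the outer integrand, this provides an integrable majorant, and dominated convergence delivers the claimed formula. The main obstacle is the explicit computation of $\int_0^\infty R_k(\tilde k)\,d\tilde k$: one must verify that the finite constant surviving the $\eps^{-1}$ cancellation is precisely $\sigma^4/40$, so that the prescribed counterterm $\sigma^4/240$ removes it cleanly and leaves only the desired $\sigma^4 k^2$. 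A sanity check at $k=0$ is reassuring: the integral reduces to $\int_0^\infty 4\tilde k^3/(e^{2\pi\tilde k}-1)\,d\tilde k=4\zeta(4)\Gamma(4)/(2\pi)^4=1/60$, which correctly produces the constant $\tfrac{3\sigma^4}{2}\cdot\tfrac{1}{60}=\sigma^4/40$.
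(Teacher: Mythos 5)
Your reduction is the same as the paper's: insert the $M$ short chords into Theorem \ref{thrMainCorrelations}, observe that each insertion contributes the decoupled factor
\begin{equation*}
F_\eps(k)=e^{\eps\sigma^2k^2/2}\,\frac{\sigma^4}{4}\int_0^\infty \frac{\Gamma\left(1\pm ik\pm i\tilde k\right)}{2\pi^2\,\Gamma(2)}\,e^{-\eps\sigma^2\tilde k^2/2}\sinh(2\pi\tilde k)\,2\tilde k\,\d\tilde k ,
\end{equation*}
and pass to the limit under the outer integral; the paper does exactly this, obtaining the asymptotics of $F_\eps$ via the Fourier transform of $\coth$, Plancherel and Hermite polynomials, while you evaluate the limit through the hyperbolic identity. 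The gap is that your central pointwise identity is wrong. With your $R_k(\tilde k)=2\tilde k(\tilde k^2-k^2)\bigl[\tfrac{\sinh(2\pi\tilde k)}{\cosh(2\pi\tilde k)-\cosh(2\pi k)}-1\bigr]$ one has the exact identity (for all $\tilde k\neq k$, not merely as a series on $\tilde k>k$)
\begin{equation*}
\frac{\sinh(2\pi\tilde k)}{\cosh(2\pi\tilde k)-\cosh(2\pi k)}-1=\frac{1}{e^{2\pi(\tilde k-k)}-1}+\frac{1}{e^{2\pi(\tilde k+k)}-1},
\end{equation*}
and after the shifts $u=2\pi(\tilde k\mp k)$ in the two pieces (using $\tfrac{1}{e^{-u}-1}=-1-\tfrac{1}{e^{u}-1}$ on the reflected range) the integral collapses to
\begin{equation*}
\int_0^\infty R_k(\tilde k)\,\d\tilde k=\frac{1}{(2\pi)^4}\left[\frac{s^4}{2}+\int_0^\infty\frac{4u^3+8s^2u}{e^u-1}\,\d u\right]_{s=2\pi k}=\frac{k^4}{2}+\frac{k^2}{3}+\frac{1}{60},
\end{equation*}
not $\tfrac{k^4}{2}+\tfrac{2k^2}{3}+\tfrac{1}{60}$. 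Your check at $k=0$ only tests the constant; the error sits in the $k^2$ coefficient, which you can confirm independently by expanding $f(t\mp s)$ with $f(t)=1/(e^t-1)$ to order $s^2$ and using $\int_0^\infty t^3f''(t)\,\d t=6\int_0^\infty tf(t)\,\d t=\pi^2$. Feeding the correct value into your own decomposition gives $\lim_{\eps\to0}6\bigl[F_\eps(k)-\eps^{-2}-\sigma^4/240\bigr]=\sigma^4k^2/2$, in agreement with the Lemma the paper proves for Theorem \ref{thrStressEnergy} (namely $F_\eps(k)=\eps^{-2}+\sigma^4/240+\sigma^4k^2/12+(1+k^5)O(\eps^{1/2})$), and not $\sigma^4k^2$.

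Two consequences. First, with the correct evaluation each insertion carries $\sigma^4k^2(r_p)/2$, so this route produces the constant $(\sigma^4/2)^M\prod_p k^2(r_p)$ rather than the $\sigma^{4M}\prod_p k^2(r_p)$ displayed in the Theorem; your factor-of-two slip happens to reproduce the displayed constant, but it does not come from a valid computation, and that constant is in fact inconsistent with the paper's own Lemma (it is also the weight $\sigma^4k^2/2$ that formal differentiation of $\int_0^\infty e^{-\sigma^2k^2/2}\sinh(2\pi k)\,2k\,\d k$ with respect to $1/\sigma^2$ yields), so matching it is not evidence of correctness. Second, the majorant you invoke for dominated convergence, $|6[F_\eps(k)-\eps^{-2}-\sigma^4/240]|\leq C(1+k^4)$ uniformly in $\eps$, is false: at fixed $\eps$ this quantity grows like $e^{\eps\sigma^2k^2/2}\,\sigma^2k^2/(2\eps)$ as $k\to\infty$. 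What is true (and suffices) is $|F_\eps(k)-\eps^{-2}-\sigma^4/240|\leq C(1+k^4)\,e^{\eps\sigma^2k^2/2}$, obtained from $e^x-1-x\le \tfrac{x^2}{2}e^x$, $e^x-1\le xe^x$ and $0\le\int_0^\infty R_k e^{-\eps\sigma^2\tilde k^2/2}\,\d\tilde k\le\int_0^\infty R_k\,\d\tilde k$; the exponential must then be absorbed into the outer Gaussian $e^{-\tau_m\sigma^2k_m^2/2}$ (possible once $\eps_p<\tau_{m_p}/2$), or one uses the uniform remainder $(1+k^5)O(\eps^{1/2})$ from the paper's Lemma.
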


\begin{rmrk}
The Schwarzian derivative also appears in the expression of the stress-energy tensor of Loewner energy, see \citep{Yilin_variation}.
\end{rmrk}
As an immediate corollary, we verify that non-coinciding stress-energy tensor correlation functions defined this way are given by the moments of spectral density, and thus agree with the expressions obtained in \citep{StanfordWittenFermionicLocalization} and \citep*{BLW} by differentiating the partition function. 
This was conjectured and numerically verified in \citep{ConformalBootstrap} for a one-point function (i.e. for $M=1$).
\begin{crl}
Let $M>0$ be an integer, and $\left\{r_p\right\}_{p=1}^M$ be distinct points on the unit circle $\T$. 
Then for any $\sigma>0$,
\begin{multline}
\lim_{\eps_1, \ldots \eps_M\to 0+}\int 6^M \prod_{p=1}^M \left(\obs{\phi}{r_p}{r_p+\eps_p}^2-\eps_p^{-2}-\frac{\sigma^4}{240}\right)\d\FMeas{\sigma^2}(\CnjCl{\phi})\\ 
=
\int_{\R_+} \sigma^{4M} \exp\left(-\frac{\sigma^2 \, k^2}{2}\right)\sinh(2\pi k)\, 2 k^{2M+1} \d k.
\end{multline}
\end{crl}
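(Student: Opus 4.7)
The plan is to derive the Corollary as a direct specialization of Theorem \ref{thrStressEnergy} to the case $N = 0$. With no cross-ratio observables in the product, the left-hand side of Theorem \ref{thrStressEnergy} coincides verbatim with the left-hand side of the Corollary, so the entire task reduces to simplifying the right-hand side of the Theorem under $N = 0$.

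The diagrammatic bookkeeping in the empty-diagram case proceeds as follows. With $N = 0$ chords, no chord is drawn inside the unit disk, so the disk remains a single connected region; hence $N + 1 = 1$, there is a single Fourier variable $k_1$, and its associated arc length $\tau_1$ is the total length of the boundary of this unique region, namely the whole circle, so $\tau_1 = 1$. Every base point $r_p$ lies on the boundary of this single region, so $k(r_p) = k_1$ for every $p \in \{1, \ldots, M\}$, and therefore $\prod_{p=1}^M k^2(r_p) = k_1^{2M}$. The product over $j$ in the right-hand side of Theorem \ref{thrStressEnergy} is empty, and the product over $m$ collapses to a single factor.

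Substituting these simplifications into the right-hand side of Theorem \ref{thrStressEnergy}, the integral over $\R_+^{N+1}$ reduces to a one-dimensional integral in the single variable $k = k_1$ over $\R_+$, and collecting the $2M$ factors of $k$ from $\prod_{p=1}^M k^2(r_p)$ with the factor $2 k_1$ from the measure gives the integrand $\sigma^{4M} \exp(-\sigma^2 k^2 / 2) \sinh(2\pi k) \cdot 2 k^{2M+1}$, which is exactly (up to notation) the right-hand side of the Corollary. There is no substantive obstacle to the argument: the only step requiring care is to verify that the diagrammatic conventions of Section \ref{sect_Diagram_circ} unambiguously assign $\tau_1 = 1$ and $k(r_p) = k_1$ when no chords are present.
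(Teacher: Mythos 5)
Your proposal is correct and is exactly the paper's intended argument: the corollary is stated as an immediate consequence of Theorem \ref{thrStressEnergy}, obtained by setting $N=0$ so that the diagram has a single domain with $\tau_1=1$ and $k(r_p)=k_1$ for all $p$. Note that your computation produces the exponent $\exp\left(-\frac{\sigma^2 k^2}{2}\right)$, which shows the factor $\sigma$ in the corollary's displayed formula is a typographical slip for $\sigma^2$.
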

\begin{rmrk}
We find it interesting that when we turn \eqref{eq_Schw_using_Obs} into the regularisation, proposed in Theorem \ref{thrStressEnergy}, we do not introduce any new diverging constants, as it often happens when one makes sense of differential operators when there is not enough regularity.

It is also worth noting that the introduced shift by $\sigma^4/240$ depends on $\sigma$. However, if we change the normalisation and, instead of fixing the length of the circle and varying the temperature, we fix the temperature and vary the length of the circle (as, e.g., in \citep{ConformalBootstrap}, where the temperature is taken to be $1$ instead of $\sigma^2/2$ here, and the length of the circle is taken to be $\beta$ instead of $1$), this term becomes an absolute constant.
\end{rmrk}

\subsection{Related literature}
The measure corresponding to the Schwarzian Field Theory was
defined in \citep*{BLW} as a finite Borel measure satisfying a natural change of variables formula.
It was further shown that such a measure is unique and can be explicitly constructed following the plan proposed in \citep{BelokurovShavgulidzeExactSolutionSchwarz, BelokurovShavgulidzeCorrelationFunctionsSchwarz}.
Its partition function (i.e. total mass) was defined and computed in the same paper.
For general references about Schwarzian Field Theory and related topics see references therein, as well as \citep{BelokurovShavgulidze3, BelokurovShavgulidze4}.

The cross-ratio correlation functions studied in this work were formally derived in the physics literature in \citep{ConformalBootstrap} by relating Schwarzian Field Theory to a certain degenerate limit of 2D Liouville CFT, and using conformal bootstrap and the DOZZ formula.
Alternatively, the same formulae were formally obtained by connecting JT gravity (which is believed to be holographically dual to the Schwarzian Field Theory) to 2D BF theory and weakly coupled limit of 2D Yang-Mills \citep{Schwarzian_Wilson_Line, JT_Wilson_Line}.
There has been a lot of progress in understanding both 2D Liouville CFT (and conformal bootstrap there) and 2D Yang-Mills from the probabilistic point of view.
Below we briefly review some recent progress in these directions.
However, as of now, there is no rigorous connection between Schwarzian Field Theory and either of these theories from the probabilistic perspective.
It would be interesting to rigorously understand relationships between Schwarzian Field Theories and both 2D Yang-Mills and 2D Liouville CFT as predicted in the physics literature.

Two dimensional Liouville CFT was rigorously constructed as a probability measure on a surface in \citep{LCFT_construction_sphere, LCFT_construction}.
For these probability measures the DOZZ formula was rigorously obtained in \citep{DOZZ}, and the conformal bootstrap solution was derived in \citep{LCFT_conformal_bootstrap, LCFT_Segal}.
See \citep{LCFT_review} and the references therein for the review.
In addition, it is possible to make sense of the stress-energy tensor and derive conformal Ward identities for probabilistic Liouville CFT \citep{LCFT_Stress_Energy1, LCFT_Stress_Energy2, LCFT_Ward}.
The ideas used to make sense of the stress-energy tensor in the context of Liouville CFT are similar to the approaches we take here and in \citep*{BLW}.

Two dimensional Yang-Mills theory, in turn, can also be defined as a probability measure.
One of the main approaches is to view it as a stochastic process of observables given by Wilson loops \citep{Levy_Holonomy, book_Yang_Mills}.
Alternatively, one can also define 2D Yang-Mills as a random distribution \citep{Chevyrev_Yang_Mills}.
Recently, it was proved in \citep{Chevyrev_Yang_Mills_Invar, Chandra_Langevin} that these measures are invariant under the corresponding Langevin dynamics, providing alternative perspective in the context of stochastic quantisation.
In addition, Wilson loops in 2D Yang-Mills are known to satisfy the Makeenko-Migdal equation \citep{Levy_Master_Field, Yang_Mills_MM1, Yang_Mills_MM2}, which, in particular allows one to study certain limits of 2D Yang-Mills measure \citep{Dahlqvist_Yang_Mills_Master_Field}.

Schwarzian Field Theory has also been studied from the algebraic and geometric perspective \citep{AlekseevShatashvili, AlekseevBosonization, AlekseevShatashvili2}. 
From this point of view, it is naturally associated to a particular coadjoint orbit of the Virasoro algebra.
Changing the coadjoint orbit essentially corresponds to changing the regularisation parameter $\alpha$ in \eqref{defMeasureMeasAlpha}, where $\alpha=\pi$ corresponds to the Schwarzian Field Theory.
In \citep{AlekseevBosonization} it was shown that coadjoint orbits corresponding to $\alpha \in i\R$ (which are not $\SL(2, \R)$ invariant) admit global equivariant Darboux charts. 
Physically this amounts to a bosonisation (diagonalisation) of the theory.
As an application, they are able to obtain certain formulae for lower order correlation functions.
We remark that the observables considered in \citep{AlekseevBosonization} are, basically, inverses of $\obs{\phi}{s}{t}_{\alpha}$, as defined in \eqref{defObsAlpha}.

\subsection{Organisation of the paper}
In Section \ref{sectMeasConstructAndProp} we recall the construction of Schwarzian measure, corresponding to the Schwarzian Field Theory, from \citep*{BLW}.

In Section \ref{sect_regularisation} we show how to express correlation functions of $\d\FMeas{\sigma^2}$ as a $\alpha\nearrow \pi$ limit of correlation functions of regularised observables $\obs{\phi}{\cdot}{\cdot}_{\alpha}$ with respect to regularised measures $\d\measN{\sigma^2}{\alpha}$ over $\Diff^1(\T)$, with regularisation parameter $\alpha$ (see Proposition~\ref{prpMainExpectObservRegular}). 
We also show how the derived expressions can be reduced to computing appropriate expectations for the case $\alpha=0$ (see Proposition~\ref{prpObsWeightChange}). 
The latter is computed in Section~\ref{sect_main_calculations}.

All the results from Section~\ref{sect_intro_main_results} are proved in Section~\ref{sect_proofs_main_theorems}.

Appendices \ref{sect_Fourier_Calc}, and \ref{sect_Appendix} contain technical results, which are used throughout the proof.

\subsection{Preliminaries and notations}
\label{sectNotation}
Throughout the paper we will be using the following notations.

\begin{enumerate}

\item The unit circle is denoted by $\T=[0,1]/\{0\sim 1\}$, the non-negative real numbers are denoted by $\R_+ = [0, \infty)$, and for the open disk in the complex plane of radius $r$ we use $\D_r = \left\{z\in \Compl: |z|<r\right\}$.
Moreover, for $s, t\in \T$ we write $t-s$ for the length of the interval going from $s$ to $t$ in the positive direction.
In particular, $t-s\in [0,1)$.

\item We use $\Diff^k(\T)$ for the set of orientation-preserving $C^k$-diffeomorphisms of $\T$, i.e.\ satisfying $\phi'(\tau) > 0$. Note that $\Diff^k(\T)$ is not a linear space.
The topology on $\Diff^k(\T)$ is inherited from the natural topology on $C^k(\T)$.
It turns $\Diff^{k}(\T)$ into a Polish (separable completely metrisable) space as well as a topological group.
The topology on the quotient space $\Diff^{k}(\T)/\SL(2, \R)$ is inherited from $\Diff^{k}(\T)$.

It will also be useful to consider reparametrisations of $[0,1]$, or more general intervals $[0,T]$, whose derivatives are not periodic.
We write $\Diff^k [0, T]$ for the set of orientation-preserving $C^k$-diffeomorphisms of $[0, T]$, i.e.\ satisfying $\phi'(t)>0$, $\phi(0) = 0$, and $\phi(T) = T$.
In particular, the derivatives do not have to match at the endpoints.
The topology on $\Diff^k [0, T]$ is inherited from the natural topology on $C^k[0, T]$.

We further set $\Cfree[0,T] = \left\{f\in C[0,T]\, | \, f(0)=0\right\}$, and $C_0[0,T] = \left\{f\in C[0,T]\, | \, f(0)=f(T) = 0\right\}$, with the topology inherited from $C[0,T]$.

\item We will abuse the notation and for $\phi\in \Diff^1(\T)$ denote its conjugacy class in $\Diff^1(\T)/\SL(2,\R)$ by $\phi$ as well.

\item Throughout the paper we will be encountering expressions of the form $f\big(\arccosh[z]\big)$, for various even analytic functions $f$.
Even though $\arccosh[z]$ is not analytic at $z=0$, the composition $f\big(\arccosh[z]\big)$ still defines an analytic function around $z=0$.
More precisely, we identify $f\big(\arccosh[z]\big)$ with $\widetilde{f}\big(\arccosh^2[z]\big)$, where $\widetilde{f}$ is an analytic function such that $\widetilde{f}(\omega) = f(\sqrt{\omega})$, and  $\arccosh^2[z]$ is the analytic function described in Statement \ref{stmArccoshDef}.

\end{enumerate}

\section{Measure construction and observables}\label{sectMeasConstructAndProp}

The Schwarzian Field Theory was defined in \citep*{BLW} as the unique finite Borel measure which satisfies the expected change of variables formula.
In this section we recall both the rigorous construction of this Schwarzian measure, which is based on the plan from \citep{BelokurovShavgulidzeExactSolutionSchwarz, BelokurovShavgulidzeCorrelationFunctionsSchwarz}, 
and its main properties which are used in the present work.
For proofs and further details and discussions see \citep*{BLW}.

\subsection{Unnormalised Brownian bridge measure}
\label{sec:BB}
The definition of the Schwarzian measure is based on the appropriate reparametrisation of  unnormalised version of the Brownian bridge measure, which we discuss in this subsection.
The unnormalised version of the Brownian bridge measure is a finite measure on $\Cfree[0,T] = \left\{f\in C[0,T]\, | \, f(0)=0\right\}$ formally corresponding to
\begin{equation} \label{e:BB-formaldensity}
\d\WS{\sigma^2}{a}{T}(\xi) = \exp\left\{-\frac{1}{2\sigma^2}\int_{0}^{T}\xi'^{\, 2} (t)\d t\right\} \delta\big(\xi(0)\big) \delta\big(\xi(T)-a\big)\prod_{\tau \in (0,T)}\d\xi(\tau).
\end{equation}

\begin{defn} \label{defn:BB}
  The unnormalised Brownian bridge measure with variance $\sigma^2 > 0$ is a finite Borel measure $\d\WS{\sigma^2}{a}{T}$ on $\Cfree[0, T]$ such that
  \begin{equation}\label{eq:25}
    \sqrt{2\pi T}\sigma \, \exp\left\{\frac{a^2}{2 T\sigma^2}\right\}\d\WS{\sigma^2}{a}{T}(\xi)
  \end{equation}
  is the distribution of a Brownian bridge $\big(\xi(t)\big)_{t\in[0,T]}$ with variance $\sigma^2$ and $\xi(0) = 0$, $\xi(T) = a$.
\end{defn}

For us it will be important that the unnormalised Brownian bridge measure satisfies the following property.
\begin{prp} \label{prop:BBcomposition}
  For any $T_1, T_2>0$, $a\in \R$ and 
any positive continuous functional $F$ on $C[0, T_1+T_2]$, we have
\begin{equation}\label{eqConvWS}
\int F(\xi)\d \WS{\sigma^2}{a}{T_1+T_2}(\xi) = 
\int_{\R}\int \int 
F(\xi_1 \sqcup \xi_2)
\d \WS{\sigma^2}{b}{T_1}(\xi_1)\d \WS{\sigma^2}{a-b}{T_2}(\xi_2)\d b,
\end{equation}
where for $f\in \Cfree [0, T_1]$ and $g\in \Cfree [0, T_2]$, we denote by $f\sqcup g \in \Cfree[0,T_1+T_2]$ 
the function
\begin{equation}
(f\sqcup g)(t) =   
\begin{dcases} 
  f(t) & \text{if }  t\in[0, T_1], \\
   f(T_1)+g(t-T_1)   & \text{if } t\in(T_1, T_1+T_2].
  \end{dcases}
\end{equation}
\end{prp}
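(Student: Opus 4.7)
The plan is to reduce the identity to the standard Markov decomposition of the Brownian bridge at the intermediate time $T_1$, combined with bookkeeping of the Gaussian normalising constants that convert between the probability Brownian bridge law and the unnormalised version appearing in Definition~\ref{defn:BB}.

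First I would use Definition~\ref{defn:BB} to rewrite both sides in terms of the probability Brownian bridge laws $\Prob^{BB}_{0\to a,T}$: the left-hand side becomes
\[
\int F(\xi)\d\WS{\sigma^2}{a}{T_1+T_2}(\xi)=\frac{e^{-a^2/(2(T_1+T_2)\sigma^2)}}{\sqrt{2\pi(T_1+T_2)}\,\sigma}\,\E^{BB}_{0\to a,T_1+T_2}\bigl[F(\xi)\bigr],
\]
and the inner double integral in the right-hand side similarly carries the factor
\[
\frac{e^{-b^2/(2T_1\sigma^2)}}{\sqrt{2\pi T_1}\,\sigma}\cdot\frac{e^{-(a-b)^2/(2T_2\sigma^2)}}{\sqrt{2\pi T_2}\,\sigma}
\]
multiplying the expectation against the product of the two probability bridges.

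Next I would invoke the Markov property of the Brownian bridge: for $\xi\sim\Prob^{BB}_{0\to a,T_1+T_2}$, the random variable $\xi(T_1)$ is Gaussian with mean $aT_1/(T_1+T_2)$ and variance $\sigma^2 T_1T_2/(T_1+T_2)$, and conditionally on $\xi(T_1)=b$ the two restrictions $\xi|_{[0,T_1]}$ and $\xi|_{[T_1,T_1+T_2]}-b$ are independent Brownian bridges distributed as $\Prob^{BB}_{0\to b,T_1}$ and $\Prob^{BB}_{0\to a-b,T_2}$ respectively. Applying Fubini (here the positivity and continuity of $F$ ensure no integrability subtleties) gives
\[
\E^{BB}_{0\to a,T_1+T_2}\bigl[F(\xi)\bigr]=\int_{\R}\E^{BB}_{0\to b,T_1}\otimes\E^{BB}_{0\to a-b,T_2}\bigl[F(\xi_1\sqcup\xi_2)\bigr]\,\rho(b)\,\d b,
\]
where $\rho$ is the Gaussian density above.

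Finally I would match the constants. What remains is the elementary Gaussian identity
\[
\frac{e^{-a^2/(2(T_1+T_2)\sigma^2)}}{\sqrt{2\pi(T_1+T_2)}\,\sigma}\,\rho(b)=\frac{e^{-b^2/(2T_1\sigma^2)}}{\sqrt{2\pi T_1}\,\sigma}\cdot\frac{e^{-(a-b)^2/(2T_2\sigma^2)}}{\sqrt{2\pi T_2}\,\sigma},
\]
which follows from completing the square via
\[
\frac{b^2}{T_1}+\frac{(a-b)^2}{T_2}=\frac{a^2}{T_1+T_2}+\frac{T_1+T_2}{T_1T_2}\left(b-\frac{aT_1}{T_1+T_2}\right)^{\!2}.
\]
Substituting this back completes the proof. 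There is no real obstacle here; the only point of care is to keep track of the Gaussian normalisations, since the whole content of the statement is precisely that the "divergent" normalising factor in the formal density~\eqref{e:BB-formaldensity} is compensated by the Lebesgue integration over the intermediate value $b$, consistently with how the unnormalised Brownian bridge measure was defined.
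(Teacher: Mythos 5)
Your argument is correct: the reduction to the probability bridge laws via Definition~\ref{defn:BB}, the disintegration of the bridge at time $T_1$ (Gaussian midpoint with mean $aT_1/(T_1+T_2)$ and variance $\sigma^2 T_1T_2/(T_1+T_2)$, conditionally independent bridge pieces), and the completing-the-square identity all check out, and positivity of $F$ justifies Tonelli. Note that this paper states the proposition without proof, deferring to the companion paper; your route is the standard one that the statement is designed for, namely that the Lebesgue integration over the intermediate value $b$ exactly absorbs the unnormalised bridge prefactors, so there is nothing further to add.
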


\subsection{Measure construction}
\label{sec:MeasConstruction}

In order to define the Schwarzian measure $\d\FMeas{\sigma^2}$,
we first need to construct a finite measure $\mu_{\sigma^2}$ on $\Diff^1(\T)$ which is similar to what is known as the Malliavin--Shavgulidze measure,
see \cite[Section~11.5]{BogachevMalliavin}. 
Formally, this measure corresponds to
\begin{equation} \label{e:MeasureMuFormal}
\d\mu_{\sigma^2}(\phi) 
= \exp\left\{-\frac{1}{2\sigma^2}\int_{0}^{1}\left(\frac{\phi''(\tau)}{\phi'(\tau)}\right)^2\d\tau\right\} \prod_{\tau \in [0,1)}\frac{\d\phi(\tau)}{\phi'(\tau)}.
\end{equation}

We can make sense of this measure by defining it as a push-forward of an unnormalised Brownian bridge on $[0,1]$ with respect to a suitable change of variables.
We \emph{define} $\mu_{\sigma^2}$ by
\begin{equation}\label{defMeasureMu}
\d\mu_{\sigma^2}(\phi) \coloneqq 
\d \WS{\sigma^2}{0}{1}(\xi)
\otimes \d\Theta, \qquad \text{with } \phi(t) = \Theta + \A_{\xi}(t)\enspace (\mathrm{mod}\, 1), \text{ for } \Theta\in [0, 1),
\end{equation}
where $\d\Theta$ is the Lebesgue measure on $[0,1)$ and
\begin{equation} \label{defP}
\A(\xi)(t) \coloneqq \A_{\xi}(t) 
\coloneqq \frac{\int_{0}^t e^{\xi(\tau)}\d\tau}{\int_{0}^1 e^{\xi(\tau)}\d\tau},
\end{equation}
The variable $\Theta$ corresponds to the value of $\phi(0)$.
Note that the map $\xi \mapsto \A(\xi)$ is a bijection between $\Cfree[0, 1]$
and $\Diff^1 [0,1]$ with inverse map
\begin{align} 
\A^{-1}: \Diff^1[0,1] &\to \Cfree[0, 1]\\
\varphi &\mapsto \log \varphi'(\cdot) - \log \varphi'(0).
\end{align}

In view of \eqref{eq:1} and \eqref{e:MeasureMuFormal},
the unquotiented Schwarzian measure is constructed as
\begin{equation} \label{defMeasureMeas}
\d\FMeasSL{\sigma^2}(\phi) = \exp\left\{ \frac{2\pi^2}{\sigma^2}\int_{0}^{1} \phi'^{\, 2}(\tau)\d\tau\right\}\d \mu_{\sigma^2}(\phi).
\end{equation}
Since $\mu_{\sigma^2}$ is supported on $\Diff^1(\T)$, this defines a Borel measure on $\Diff^1(\T)$, 
which turns out to be the unique measure satisfying the natural change of variables formula.

\begin{thr_old}[Theorem~1, \citep*{BLW}]
The measure $\FMeasSL{\sigma^2}$ is the unique (up to a multiplicative constant) $\SL(2,\R)$-invariant Borel measure supported on $\Diff^1(\T)$
  that satisfies the expected change of variables formula
  \begin{equation}  \label{eqDiffeoMeasureChange}
    \frac{\d \psi^{\ast}\!\FMeasSL{\sigma^2}(\phi)}{\d\FMeasSL{\sigma^2}(\phi)} =
    \frac{\d \FMeasSL{\sigma^2}(\psi\circ \phi)}{\d\FMeasSL{\sigma^2}(\phi)} = 
    \exp\left\{
      \frac{1}{\sigma^2}\int_{\T}\Big[ \Schw(\tan(\pi\psi),\phi(\tau))-2\pi^2 \Big]\, \phi'(\tau)^2 \d \tau \right\}
    ,
  \end{equation}
    for  any $\psi\in \Diff^3(\T)$, and has a quotient $\FMeas{\sigma^2} = \FMeasSL{\sigma^{2}}/\SL(2,\R)$ that is a finite Borel measure on $\Diff^1(\T)/\SL(2,\R)$.
\end{thr_old}

\begin{defn}\label{defn_schwarzian}
  The Schwarzian measure is given by $\FMeas{\sigma^2}$.
\end{defn}

One of the main tools which will allow us to compute integrals with respect to $\d\mu_{\sigma^2}$ and $\d\FMeasSL{\sigma^2}$ exactly is the following analogue of Girsanov's Theorem (see \citep*{BLW} for the proof).

\begin{lmm}
\label{crlBBMMeasureChange}
For $g\in \Diff^3[0,1]$ we denote by $L_g$ the left composition operator on $\Diff^1[0,1]$:
\begin{equation}\label{eq:39}
L_g(\phi) = g \circ \phi.
\end{equation}
Suppose $f\in \Diff^3[0,1]$. 
Denote $b = \log f'(1) - \log f'(0)$. 
Let $f^{\sharp} \WS{\sigma^2}{a}{1} = f_{\sharp}^{-1} \WS{\sigma^2}{a}{1}$ be the push-forward of $\WS{\sigma^2}{a}{1}$ under $\A^{-1} \circ L_{f^{-1}} \circ \A=\left(\A^{-1} \circ L_{f} \circ \A\right)^{-1}$.
Then for any $a\in \R$, $f^{\sharp} \WS{\sigma^2}{a}{1}$ is absolutely continuous with respect to $\WS{\sigma^2}{a-b}{1}$ and
\begin{equation}
\frac{\d f^{\sharp} \WS{\sigma^2}{a}{1}(\xi)}{\d  \WS{\sigma^2}{a-b}{1}(\xi)} =  
\frac{1}{\sqrt{f'(0)f'(1)}}
\exp\left\{ 
\frac{1}{\sigma^2}\left[\frac{f''(0)}{f'(0)}\A_{\xi}'(0)-\frac{f''(1)}{f'(1)}\A_{\xi}'(1)\right]
+
\frac{1}{\sigma^2}\int_0^1 \Schw_f \Big(\A_{\xi}(t)\Big)\Big(\A_{\xi}'(t)\Big)^2\d t
\right\}.
\end{equation}
\end{lmm}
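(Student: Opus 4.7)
The plan is to derive the formula via a formal change of variables (using $T_{f^{-1}}=T_f^{-1}$) on the formal density of $\WS{\sigma^2}{a}{1}$, and then justify rigorously by reducing the bridge identity to the Malliavin--Shavgulidze quasi-invariance theorem for the full Wiener measure $\mathcal{W}^{\sigma^2}$ on $\Cfree[0,1]$. If $\eta\sim\WS{\sigma^2}{a}{1}$ and $\xi = T_{f^{-1}}(\eta)$, writing $\eta = T_f(\xi)$ and performing the change of variables produces three effects: a shift of the endpoint constraint, a transformation of the kinetic term in the formal density, and an infinite-dimensional Jacobian of $T_f$. The endpoint shift is immediate: $T_f(\xi)(1) = \xi(1) + b$, so the constraint $\eta(1)=a$ becomes $\xi(1) = a-b$, matching the support of $\WS{\sigma^2}{a-b}{1}$.

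For the kinetic contribution, differentiating $T_f(\xi)(t) = \xi(t) + \log f'(\A_\xi(t)) - \log f'(0)$ yields $T_f(\xi)'(t) = \xi'(t) + h(\A_\xi(t))\A_\xi'(t)$ with $h := f''/f'$. Combined with the identity $\A_\xi''(t) = \xi'(t)\A_\xi'(t)$ (which follows from $\A_\xi'(t) = e^{\xi(t)}/\int_0^1 e^{\xi}$), integrating the cross term $\int h(\A_\xi)\A_\xi''\,dt$ by parts using $\A_\xi(0)=0$, $\A_\xi(1)=1$ and substituting $(f''/f')' = \Schw_f + \tfrac{1}{2}(f''/f')^2$ makes the pure $h^2$ terms cancel, leaving
\[
\tfrac{1}{2\sigma^2}\!\left[\int_0^1\xi'(t)^2\,dt - \int_0^1 T_f(\xi)'(t)^2\,dt\right] = \tfrac{1}{\sigma^2}\!\left[\tfrac{f''(0)}{f'(0)}\A_\xi'(0) - \tfrac{f''(1)}{f'(1)}\A_\xi'(1) + \int_0^1\Schw_f(\A_\xi(t))\A_\xi'(t)^2\,dt\right],
\]
which is precisely the exponential factor appearing in the stated density.

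The remaining factor $(f'(0)f'(1))^{-1/2}$ is the infinite-dimensional Jacobian of $T_f$. Rather than attempting a direct Carleman--Fredholm determinant computation for $DT_f(\xi) = I + K$ with kernel $K(t,s) = h(\A_\xi(t))\A_\xi'(s)\bigl[\mathbb{1}_{s\leq t}-\A_\xi(t)\bigr]$ -- delicate because the kernel is discontinuous on the diagonal, so that a naive Fredholm-type answer depends sensitively on the regularisation -- my plan is to first establish the analogous change-of-measure formula at the level of the full Wiener measure $\mathcal{W}^{\sigma^2}$ (equivalently, for $\A_{\sharp}\mathcal{W}^{\sigma^2}$ on $\Diff^1[0,1]$), where the identity with prefactor $(f'(0)f'(1))^{-1/2}$ is a classical consequence of the Malliavin--Shavgulidze quasi-invariance theorem; see \citep[Section 11.5]{BogachevMalliavin}. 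Disintegrating $\mathcal{W}^{\sigma^2}$ against the endpoint value via Definition~\ref{defn:BB}, the Gaussian endpoint-density ratio $\exp(-a^2/(2\sigma^2))/\exp(-(a-b)^2/(2\sigma^2)) = \exp(-ab/\sigma^2 + b^2/(2\sigma^2))$ cancels the additional factor introduced by the conditional Girsanov step, so the Wiener-level prefactor $(f'(0)f'(1))^{-1/2}$ passes unchanged to the bridge formula. The main obstacle is thus the classical Wiener-level quasi-invariance, which bypasses the direct infinite-dimensional determinant calculation at the bridge level.
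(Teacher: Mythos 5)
You should first note that this paper does not actually prove Lemma \ref{crlBBMMeasureChange}: it is imported verbatim from the companion paper \citep{BLW} (Section \ref{sectMeasConstructAndProp} explicitly defers all proofs there), so there is no internal argument to compare against; in \citep{BLW} the formula is established directly at the level of the bridge measures. Within your proposal, the formal part is right and matches the statement: the endpoint shift $T_f(\xi)(1)=\xi(1)+b$, the identity $T_f(\xi)'=\xi'+h(\A_\xi)\A_\xi'$ with $h=f''/f'$, and the integration by parts using $\A_\xi''=\xi'\A_\xi'$ and $h'=\Schw_f+\tfrac12 h^2$ do reproduce exactly the exponent in the lemma.

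The genuine gap is in how you handle the prefactor $\big(f'(0)f'(1)\big)^{-1/2}$, which is the entire analytic content of the lemma. You declare it to be "a classical consequence of the Malliavin--Shavgulidze quasi-invariance theorem" at the Wiener level and then transfer it to the bridges by disintegration. But because $\WS{\sigma^2}{a}{1}$ is precisely the Lebesgue disintegration of the Wiener measure over the endpoint $\xi(1)$ (Definition \ref{defn:BB}), the Wiener-level density with this exact prefactor and these exact boundary terms is essentially equivalent to the statement being proved; citing it is circular unless the reference really contains the explicit density for left composition on $\Diff^1[0,1]$ by $f$ with arbitrary $f'(0),f'(1)$, in exactly this normalisation. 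The paper only says $\mu_{\sigma^2}$ is ``similar to'' the Malliavin--Shavgulidze measure of \citep[Section 11.5]{BogachevMalliavin}; the quasi-invariance statements available there (circle case, or densities in other conventions) do not hand you the factor $\big(f'(0)f'(1)\big)^{-1/2}$ together with the boundary terms for free, and obtaining them rigorously is nontrivial precisely because the drift $h(\A_\xi(t))\A_\xi'(t)$ is non-adapted ($\A_\xi$ involves $\int_0^1 e^{\xi}$), so the standard Girsanov theorem does not apply directly and one must use an anticipating change of variables or finite-dimensional approximation -- the step your plan never carries out. Two further points are asserted but not argued: the disintegration identity only yields $f^{\sharp}\WS{\sigma^2}{a}{1}=\rho\,\WS{\sigma^2}{a-b}{1}$ for Lebesgue-a.e.\ $a$, whereas the lemma claims it for every $a$, so you need a continuity-in-$a$ (with uniform integrability of the density, cf.\ Lemma \ref{lmmExpMomentAprioriBound}) argument; and the sentence about the Gaussian endpoint ratio ``cancelling the factor introduced by the conditional Girsanov step'' is not a proof -- with the unnormalised bridges no cancellation occurs at all, the endpoint-independent Wiener density passes to the slices by uniqueness of disintegration, which is the statement that needs to be made precise.
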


\subsection{Measure regularisation}
In order to evaluate expectations with respect to the (finite) quotient measure $\d\FMeas{\sigma^2}$,
we approximate the (infinite) unquotiented measure $\d\FMeasSL{\sigma^2}$  by a particular family of finite measures.
Since these measures are finite, they necessarily break the $\SL(2,\R)$-invariance.
The following regularisation was proposed in \citep{BelokurovShavgulidzeExactSolutionSchwarz} and used in \citep*{BLW} to calculate the partition function.
For $\alpha \in [0,\pi]\cup i\R$ consider the measures given by
\begin{equation}\label{defMeasureMeasAlpha}
\d\measN{\sigma^2}{\alpha} (\phi) = 
\exp\left\{\frac{2 \alpha^2}{\sigma^2} \int_0^{1}\phi'^{\, 2}(t)\d t\right\} 
 \d\WS{\sigma^2}{0}{1}( \xi)
, \qquad \text{where } \phi = \A(\xi).
\end{equation}
In \citep*{BLW} these measures are called \emph{pinned Virasoro $\alpha$-orbital measures},
where pinning refers to the condition that $\phi(0)=0$.
In particular, $\d\FMeasSL{\sigma^2}$ differs from $\d\measN{\sigma^2}{\pi}$ only by rotation by the random angle $\Theta$, which is chosen independently and uniformly on $\T$.
This feature allows us to use these measures to calculate expectations with respect to the Schwarzian measure $\d\FMeas{\sigma^2}$.
\begin{prp}\label{prpIntegralRegularization}
Let $F:\Diff^1(\T)/\SL(2, \R) \to [0, \infty]$ be a continuous function. Then 
\begin{equation}
\int\limits_{\mathclap{\Diff^1(\T)/\SL(2,\R)}} F(\CnjCl{\phi}) \d\FMeas{\sigma^2}(\CnjCl{\phi}) 
=
 \lim_{\alpha \to \pi-} \frac{4\pi(\pi-\alpha)}{\sigma^2}\int\limits_{\mathclap{\Diff^1(\T)}} F(\CnjCl{\phi}) \d\measN{\sigma^2}{\alpha}(\phi),
\end{equation}
where, by slight abuse of notation, we denote the lift of $F$ along the quotient map $\Diff^1(\T) \twoheadrightarrow \Diff^1(\T)/\SL(2,\R)$ by $F$ as well.
\end{prp}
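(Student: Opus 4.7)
My plan is to combine the $\SL(2,\R)$-disintegration of Proposition \ref{propMeasureFactor} with a pointwise Laplace asymptotic on $\SL(2,\R)$, and then upgrade this to $L^1$ convergence using the $F\equiv 1$ case (the partition function identity established in \citep*{BLW}) together with Scheff\'e's lemma. Since the lift of any continuous $F:\Diff^1(\T)/\SL(2,\R)\to[0,\infty]$ is $\SL(2,\R)$-invariant on $\Diff^1(\T)$ (hence rotation-invariant), comparing \eqref{defMeasureMeas} and \eqref{defMeasureMeasAlpha} shows that the random rotation $\Theta$ in $\d\FMeasSL{\sigma^2}$ contributes trivially, giving
$$\int F\, \d\meas{\sigma^2}{\alpha} = \int F(\phi)\, G_\alpha(\phi)\, \d\FMeasSL{\sigma^2}(\phi), \qquad G_\alpha(\phi) := \exp\Bigl(-\tfrac{2(\pi^2-\alpha^2)}{\sigma^2}\int_0^1 \phi'^{\,2}(\tau)\, \d\tau\Bigr).$$
Applying Proposition \ref{propMeasureFactor} to $F\cdot G_\alpha$ and using $F(\psi\circ\phi)=F([\phi])$ yields
$$\int F\, \d\meas{\sigma^2}{\alpha} = \int_{\Diff^1(\T)/\SL(2,\R)} F([\phi])\, J(\alpha,\phi)\, \d\FMeas{\sigma^2}([\phi]),\qquad J(\alpha,\phi) := \int_{\SL(2,\R)} G_\alpha(\psi\circ\phi)\, \d\nu_H(\psi),$$
so it suffices to show that $h_\alpha([\phi]) := \frac{4\pi(\pi-\alpha)}{\sigma^2}\, J(\alpha,\phi)$ tends to $1$ in $L^1(\d\FMeas{\sigma^2})$.

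For the pointwise limit I would parametrise $\psi\in\SL(2,\R)$ by $\psi(z)=e^{i\theta}(z-a)/(1-\bar a z)$ with $(\theta, a)\in[0,2\pi)\times\D_1$, so that $|\psi'(u)|=(1-|a|^2)/|1-\bar a e^{2\pi i u}|^2$ is independent of $\theta$, and (in the normalisation compatible with the proposition) $\d\nu_H=\pi^{-2}(1-|a|^2)^{-2}\,\d\theta\, \d A(a)$. The change of variable $u=\phi(\tau)$ rewrites $\int_0^1(\psi\circ\phi)'^{\,2}\,\d\tau$ as $\int_\T \psi'(u)^2\rho(u)\,\d u$, where $\rho(u):=\phi'(\phi^{-1}(u))$ satisfies $\int_\T \d u/\rho(u)=\int_\T(\phi^{-1})'(u)\,\d u=1$. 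As $|a|\to 1$ the squared Poisson kernel $\psi'(u)^2$ concentrates at $u_a:=\arg(a)/(2\pi)$, producing $\int(\psi\circ\phi)'^{\,2}\,\d\tau\sim \rho(u_a)/(1-|a|)$. Substituting $s=1/(1-|a|)$ reduces the radial integration to $\int_1^\infty e^{-\eps\rho(u_a) s}\,\d s\sim(\eps\rho(u_a))^{-1}$ with $\eps:=2(\pi^2-\alpha^2)/\sigma^2$; integrating $1/\rho(u_a)$ over $u_a\in[0,1)$ collapses the $\phi$-dependence via $\int \d u/\rho=1$, and using $\pi^2-\alpha^2\sim 2\pi(\pi-\alpha)$ one reads off $h_\alpha([\phi])\to 1$.

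The upgrade to $L^1$ uses that $\int h_\alpha\,\d\FMeas{\sigma^2}=\frac{4\pi(\pi-\alpha)}{\sigma^2}\int\d\meas{\sigma^2}{\alpha}$ converges to $\FMeas{\sigma^2}(\Diff^1(\T)/\SL(2,\R))$ by the partition function computation of \citep*{BLW}. Combined with $h_\alpha\ge 0$ and the pointwise convergence above, Scheff\'e's lemma gives $h_\alpha\to 1$ in $L^1(\d\FMeas{\sigma^2})$. For bounded continuous $F$, the conclusion follows from continuity of multiplication by $F\in L^\infty$ on $L^1$, and monotone convergence via $F\mapsto F\wedge n$ promotes this to arbitrary non-negative continuous $F$. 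The main obstacle is the Laplace step: pinning down the precise normalisation of $\nu_H$ and controlling the subleading corrections in the radial expansion, where one must verify that the $\phi$-dependence indeed collapses into the universal identity $\int\d u/\rho=1$; the Scheff\'e argument then bypasses the otherwise delicate task of constructing a pointwise uniform dominating function.
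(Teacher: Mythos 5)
The paper itself does not prove this proposition: Section \ref{sectMeasConstructAndProp} recalls it from the companion paper \citep*{BLW}, so your attempt can only be judged on its own merits. Your overall architecture is reasonable and may well be close in spirit to what is done there: reduce everything to the statement that $h_\alpha([\phi])=\frac{4\pi(\pi-\alpha)}{\sigma^2}\int_{\SL(2,\R)}\exp\bigl(-\tfrac{2(\pi^2-\alpha^2)}{\sigma^2}\int_0^1(\psi\circ\phi)'^{\,2}\bigr)\d\nu_H(\psi)$ tends to $1$, using the disintegration of Proposition \ref{propMeasureFactor}; your first two reductions (rotation invariance of the lift to pass from $\d\meas{\sigma^2}{\alpha}$ to $G_\alpha\,\d\FMeasSL{\sigma^2}$, then the $\SL(2,\R)$-disintegration) are correct, and the Poisson-kernel concentration together with the identity $\int_\T \d u/\rho(u)=1$ is exactly the mechanism that makes the limit of $h_\alpha$ independent of $\phi$. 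Two caveats on this part: the normalisation of $\nu_H$ is never fixed in the present paper, and the constant $4\pi(\pi-\alpha)/\sigma^2$ is precisely a statement about that normalisation, so writing down $\d\nu_H=\pi^{-2}(1-|a|^2)^{-2}\d\theta\,\d A(a)$ ``in the normalisation compatible with the proposition'' is circular as stated (you acknowledge this); and the pointwise Laplace asymptotics still needs genuine two-sided estimates, since for fixed $\alpha$ the error in $h_\alpha(\phi)$ depends on the modulus of continuity and the minimum of $\phi'$ and is not uniform in $\phi$.

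The genuine gap is the last step. Scheff\'e plus boundedness of $F$ gives the claim for bounded continuous $F$, but the promotion to arbitrary non-negative continuous $F$ ``by monotone convergence via $F\wedge n$'' only yields $\liminf_{\alpha\to\pi-}\int F h_\alpha\,\d\FMeas{\sigma^2}\ge\int F\,\d\FMeas{\sigma^2}$; when $\int F\,\d\FMeas{\sigma^2}<\infty$ the reverse inequality does not follow from $h_\alpha\to1$ in $L^1$ alone, because $h_\alpha$ is not (obviously) uniformly bounded in $\phi$ and the mass of $F h_\alpha$ could escape along the set where $F$ is large and $h_\alpha$ exceeds $1$. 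This matters for this paper: Proposition \ref{prpMainExpectObservRegular} applies the result to unbounded functionals (products of cross-ratio observables), so the unbounded case is the one actually used. To close the gap you need something quantitative, e.g.\ a bound $\sup_{\phi}h_\alpha(\phi)\le C$ uniformly for $\alpha$ near $\pi$ (which should be extractable from the constraint $\int\d u/\rho=1$ via two-sided kernel estimates, but is not a consequence of the pointwise limit), or uniform integrability of $\{F h_\alpha\}$; as written, the final step of your argument does not deliver the claimed equality for unbounded $F$.
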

Another advantage of using the constructed measures $\d\measN{\sigma^2}{\alpha}$ is that for them integrals of certain functionals can be computed explicitly, see Section \ref{sect_obs_approximation}.

\section{Observables and their regularisation}\label{sect_regularisation} 

\subsection{Observables}\label{sect:Observables}
The main property of cross-ratio observables \eqref{eqDefObs} is that they define observables on the quotient space $\Diff^1(\T)/\SL(2, \R)$. 

\begin{prp}\label{prpObsSLInvar}
Observables $\obs{\phi}{s}{t}$ are invariant under the action of M\"{o}bius transformations. 
In other words, if $\psi\in \SL(2, \R)$, then
\begin{equation}
\obs{\psi\circ\phi}{s}{t} = \obs{\phi}{s}{t}.
\end{equation}
In particular, they induce well-defined observables on $\Diff^1(\T)/\SL(2, \R)$, which we, slightly abusing the notation, also denote by $ \obs{\phi}{\cdot}{\cdot}$.
\end{prp}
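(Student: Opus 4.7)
The strategy is to recognise $\obs{\phi}{s}{t}$ as a "chord–length" ratio of an auxiliary map from $\T$ to $\partial\D$, and then invoke the classical Möbius invariance of such ratios. Set
\begin{equation*}
F_\phi:\T\to\partial\D, \qquad F_\phi(\tau) = e^{2\pi i\phi(\tau)}.
\end{equation*}
From the identity $e^{2\pi i b} - e^{2\pi i a} = 2i\,\sin\bigl(\pi(b-a)\bigr)e^{i\pi(a+b)}$ together with $|F_\phi'(\tau)| = 2\pi\,\phi'(\tau)$, a one-line computation rewrites the observable as
\begin{equation*}
\obs{\phi}{s}{t} = \frac{\sqrt{|F_\phi'(s)|\,|F_\phi'(t)|}}{\bigl|F_\phi(t) - F_\phi(s)\bigr|},
\end{equation*}
where the signs cause no issue because $\phi$ is orientation-preserving and the convention $t-s\in[0,1)$ keeps the sine positive.

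Next I would translate the $\SL(2,\R)$-action into the $F_\phi$-picture. By the definition recalled in the introduction, an element $\psi\in\SL(2,\R)=\mathrm{PSL}(2,\R)$ is the boundary restriction of a Möbius automorphism $\tilde\psi$ of $\D$; under the identification $\T\ni\tau\mapsto e^{2\pi i\tau}\in\partial\D$ this intertwines left composition and post-composition, i.e.\ $F_{\psi\circ\phi} = \tilde\psi\circ F_\phi$. So invariance of the observable reduces to invariance of $\sqrt{|G'(z_1)|\,|G'(z_2)|}\,/\,|G(z_1)-G(z_2)|$ under replacing $G$ by $\tilde\psi\circ G$ for $\tilde\psi\in\mathrm{Aut}(\D)$.

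This last step is a direct computation. Writing $\tilde\psi(z) = (az+b)/(\bar b z+\bar a)$ with $|a|^2-|b|^2=1$, the standard identity $(az_1+b)(\bar b z_2+\bar a) - (az_2+b)(\bar b z_1+\bar a) = (|a|^2-|b|^2)(z_1-z_2)$ yields
\begin{equation*}
\tilde\psi(z_1) - \tilde\psi(z_2) = \frac{z_1-z_2}{(\bar b z_1+\bar a)(\bar b z_2+\bar a)},\qquad |\tilde\psi'(z)| = \frac{1}{|\bar b z+\bar a|^2},
\end{equation*}
so that $\sqrt{|\tilde\psi'(z_1)|\,|\tilde\psi'(z_2)|}\,/\,|\tilde\psi(z_1)-\tilde\psi(z_2)| = 1/|z_1-z_2|$. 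Combining the three steps gives $\obs{\psi\circ\phi}{s}{t} = \obs{\phi}{s}{t}$, and the well-definedness on $\Diff^1(\T)/\SL(2,\R)$ is immediate.

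I do not anticipate any real obstacle here; the only point requiring care is matching the conventions between $\SL(2,\R)$ as a group of circle diffeomorphisms and as the automorphism group of $\D$, and tracking signs/absolute values in the passage to $F_\phi$, both of which are routine once the orientation-preserving property of $\phi$ is used.
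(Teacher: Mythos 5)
Your proof is correct and follows essentially the same route as the paper: rewrite the observable as a Möbius-invariant two-point ratio in a model coordinate and then invoke the standard fractional-linear identity. The only difference is the choice of chart — you use the boundary map $e^{2\pi i \phi}$ and disk automorphisms $z\mapsto (az+b)/(\bar b z+\bar a)$, whereas the paper conjugates by $\tan\big(\pi(\cdot-a)\big)$ with $a$ chosen away from $\phi(s),\phi(t)$ and cites the invariance of the real cross-ratio $\obs{f}{s}{t}_{0}$ under real fractional-linear maps; your variant avoids that auxiliary choice of $a$ but is otherwise the same argument.
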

\begin{proof}
Let $\psi\in \SL(2, \R)$ be M\"{o}bius transformation of the unit circle. 
Fix $\phi \in \Diff^1(\T)$ and $s\neq t \in T$. 
Denote $F(\tau) = \tan\big(\pi (\tau-a)\big)$, where $a\in \T$ is such that $a\neq \phi(s)$, and $a\neq \phi(t)$.
Also denote $f = F \circ \phi $ and $g = F \circ \psi \circ F^{-1}$. 
Then
\begin{equation}\label{eqLmmObsSLInvarLineMob}
g (x) = \frac{a x+b}{c x +d},
\end{equation}
for some $a, b, c, d\in \R$ with $ad-bc=1$, and all $x\in \R$.

We have that
\begin{equation}
\obs{\phi}{s}{t} = \left|\obs{f}{s}{t}_0\right|,
\end{equation}
where 
\begin{equation}\label{defObsZero}
\obs{\phi}{s}{t}_{0} 
= 
\dfrac{\sqrt{\phi'(t)\phi'(s)}}{\phi(t)-\phi(s)}.
\end{equation}
It is well-known that the cross-ratio $\obs{f}{\cdot}{\cdot}_0$ is invariant under transformations of the form \eqref{eqLmmObsSLInvarLineMob}. 
Thus,
\begin{equation}
\obs{\phi}{s}{t} 
= \left| \obs{f}{s}{t}_0 \right|
= \left|\obs{g\circ f}{s}{t}_0\right|
=\obs{\psi \circ \phi}{s}{t}.
\end{equation}
\end{proof}

\subsection{Regularised observables}
\label{sect_regularised_obs}
It turns out that in order to be able to compute correlation functions for the measures $\d\measN{\sigma^2}{\alpha}$ we need to modify the observables too.
The new observables will depend on $\alpha$, but they will agree with \eqref{eqDefObs} when $\alpha = \pi$. 
Thus, we will calculate the expectations of new observables against $\d\measN{\sigma^2}{\alpha}$, and take $\alpha\nearrow \pi$.

For $\alpha \in [0, \pi]$ and $0 \leq s \neq t \leq 1$ we define
\begin{equation}\label{defObsAlpha}
\obs{\phi}{s}{t}_{\alpha} = 
\begin{dcases} 
   \dfrac{\alpha\sqrt{\phi'(t)\phi'(s)}}{\sin\big(\alpha[\phi(t)-\phi(s)]\big)} & \text{if } \alpha > 0, \\
   \dfrac{\sqrt{\phi'(t)\phi'(s)}}{\phi(t)-\phi(s)}	      & \text{if } \alpha= 0.
  \end{dcases}
\end{equation}
In this definition we also adopt the convention that $\phi(1)-\phi(0) = 1$, meaning that 
\begin{equation}
\obs{\phi}{0}{1}_{\alpha} = 
\begin{dcases} 
   \dfrac{\alpha \, \phi'(0)}{\sin(\alpha)} & \text{if } \alpha > 0, \\
   \phi'(0)	      & \text{if } \alpha= 0.
  \end{dcases}
\end{equation}

Notice that 
$\obs{\phi}{s}{t}_{\pi} =  \obs{\phi}{s}{t}.$
However, even though it is true that $\lim_{\alpha \to \pi-} \obs{\phi}{t}{s}_{\alpha} = \obs{\phi}{s}{t}_{\pi}$, this limit is not sufficiently uniform in $\phi$, and so it will not allow us to pass to the $\alpha \nearrow \pi$ limit for the integrals of new observables against $\d\measN{\sigma^2}{\alpha}$ (in fact, doing so leads to wrong answer).

Also observe that, generally, for $\alpha<\pi$ we have $\obs{\phi}{s}{t}_{\alpha}\neq \obs{\phi}{t}{s}_{\alpha}$, since, recall, with our notation for $\T$, $\phi(t)-\phi(s)$ and $\phi(s)-\phi(t)$ sum to $1$.
This will be important in Lemma \ref{lmmObsSymmetrizationApprox}, when we exploit this property to approximate $\obs{\phi}{s}{t}_{\pi}$ sufficiently uniformly in $\alpha \nearrow \pi$.

\subsection{Observables approximation}\label{sect_obs_approximation}
Our goal in this subsection is to express correlation functions of $\obs{\phi}{\cdot}{\cdot}$ with respect to $\d\FMeas{\sigma^2}$ as appropriate limits of correlation functions of $\obs{\phi}{\cdot}{\cdot}_{\alpha}$ with respect to $\d\measN{\sigma^2}{\alpha}$, see Proposition \ref{prpMainExpectObservRegular}.
For this, we first need to approximate products of $\obs{\phi}{\cdot}{\cdot}$ with products of $\obs{\phi}{\cdot}{\cdot}_{\alpha}$ uniformly in $\phi$ when $\alpha\to \pi$, see Lemma \ref{lmmObsSymmetrizationApprox}.

\medskip

Let $\left\{s_j\right\}_{j=1}^N$ and $\left\{t_j\right\}_{j=1}^N$ be points on $\T$.
Enumerate the union of points  $\left\{s_j\right\}_{j=1}^N$ and $\left\{t_j\right\}_{j=1}^N$ counter-clockwise as $r_1\leq r_2 \leq \ldots \leq r_{2N}\leq r_{2N+1} = r_1+1$.

For $x \in \T$ define rotation as
\begin{equation}
\Big(\sh{\phi}{x}\Big)(\tau) = \phi(\tau+x) - \phi(x), 
\qquad
\sh{s_j}{x} = \min\left\{s_j-x, t_j-x\right\},
\qquad
\sh{t_j}{x} = \max\left\{s_j-x, t_j-x\right\}.
\end{equation}

\begin{lmm}\label{lmmObsSymmetrizationApprox}
Let $\left\{s_j\right\}_{j=1}^N$ and $\left\{t_j\right\}_{j=1}^N$ be points on $\T$, and $\left\{l_j\right\}_{j=1}^{N}$ be positive numbers. 
Let also $\{r_j\}_{j=1}^N$
be as above.
Then
\begin{multline}
\sum_{m=1}^{2N}\left[\Big(\phi(r_{m+1})-\phi(r_{m})\Big) 
\prod_{j=1}^{N}\obs{\sh{\phi}{r_{m+1}}}{\sh{s_j}{r_{m+1}}}{\sh{t_j}{r_{m+1}}}_{\alpha}^{l_j}\right]\\
\underset{\alpha\to \pi}{=}
\left[\prod_{j=1}^{N}\obs{\phi}{s_j}{t_j}^{l_j}\right]\big(1+o(1)\big),
\end{multline}
uniformly in $\phi \in \Diff^1(\T)$.
\end{lmm}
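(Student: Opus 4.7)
The plan is to cancel the common $\sqrt{\phi'(s_j)\phi'(t_j)}$ factors on both sides and reduce the lemma to a uniform asymptotic estimate on a weighted sum of ratios of sines, which I then control by combining a simple combinatorial weight identity with a telescoping product bound and a standard concavity estimate on $\sin$.

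A direct computation using $(\sh{\phi}{r_{m+1}})'(\tau) = \phi'(\tau + r_{m+1})$ gives
\begin{equation*}
\obs{\sh{\phi}{r_{m+1}}}{\sh{s_j}{r_{m+1}}}{\sh{t_j}{r_{m+1}}}_{\alpha} = \frac{\alpha\,\sqrt{\phi'(s_j)\phi'(t_j)}}{\sin(\alpha D_j^{(m)})},
\end{equation*}
where $u_j := \phi(t_j) - \phi(s_j) \in (0,1)$ and $D_j^{(m)} \in \{u_j,\, 1-u_j\}$ is the $\phi$-length of whichever of the two $\{s_j,t_j\}$-arcs does not contain $r_{m+1}$. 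Dividing by $\obs{\phi}{s_j}{t_j}^{l_j}$ and using $\sin(\pi u_j) = \sin(\pi(1-u_j))$, the claim becomes
\begin{equation*}
Q_\alpha(\phi) := \Big(\tfrac{\alpha}{\pi}\Big)^{\sum_j l_j} \sum_{m=1}^{2N} W_m \prod_{j=1}^N R_j\big(D_j^{(m)}\big)^{l_j} \xrightarrow[\alpha \to \pi]{} 1 \quad \text{uniformly in } \phi,
\end{equation*}
where $W_m := \phi(r_{m+1}) - \phi(r_m)$ and $R_j(D) := \sin(\pi u_j)/\sin(\alpha D)$. The crucial combinatorial input is the pair of weight identities $\sum_{m:\, D_j^{(m)} = u_j} W_m = 1 - u_j$ and $\sum_{m:\, D_j^{(m)} = 1-u_j} W_m = u_j$, which hold because the $\phi$-arcs $\big[\phi(r_m),\phi(r_{m+1})\big]$ exactly tile the two $\{s_j,t_j\}$-arcs of $\phi$-lengths $u_j$ and $1-u_j$.

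Next, I would record two elementary estimates on $R_j$. Since $x \mapsto \sin(x)/x$ is decreasing on $(0,\pi]$ and $\alpha D \le \pi D \le \pi$, one has $R_j(D) \le \pi/\alpha$ for $D \in (0,1)$. Moreover, combining $|\sin(\pi u_j) - \sin(\alpha D)| = |\sin(\pi D) - \sin(\alpha D)| \le (\pi-\alpha) D$ (valid for $D \in \{u_j, 1-u_j\}$) with the standard trigonometric bound $\sin(y) \ge (2/\pi^2)\, y(\pi - y)$ on $[0,\pi]$ applied at $y = \alpha D$ yields
\begin{equation*}
(1-D)\, |R_j(D) - 1| \;\le\; \frac{\pi^2 (\pi - \alpha)}{2 \alpha^2} \qquad \text{for every } D \in \{u_j,\, 1-u_j\},\ u_j \in (0,1),
\end{equation*}
after noting that $(1-D)/(\pi-\alpha + \alpha(1-D)) \le 1/\alpha$. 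Applying this at $D = u_j$ and $D = 1-u_j$ and invoking the weight identities yields $\sum_m W_m |R_j(D_j^{(m)}) - 1| \le \pi^2(\pi-\alpha)/\alpha^2$, uniformly in $\phi$.

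Finally, the telescoping identity $\prod_j R_j^{l_j} - 1 = \sum_i (R_i^{l_i} - 1)\prod_{k<i} R_k^{l_k}$, combined with the elementary inequality $|a^l - 1| \le l(\pi/\alpha)^{l-1}|a - 1|$ valid for $a \in [0,\pi/\alpha]$, gives $\big|\prod_j R_j^{l_j}(D_j^{(m)}) - 1\big| \le C \sum_i |R_i(D_i^{(m)}) - 1|$ with a constant $C = C(\alpha,\{l_j\})$ that remains bounded as $\alpha \to \pi$. Summing this against $W_m$ and using the previous estimate produces $\sum_m W_m \prod_j R_j^{l_j}(D_j^{(m)}) = 1 + O(\pi-\alpha)$, and hence $Q_\alpha(\phi) = 1 + O(\pi-\alpha)$ uniformly in $\phi$, which is exactly the asserted asymptotic. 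The main technical subtlety is the uniform bound on $(1-D)|R_j(D) - 1|$ when $D$ approaches $0$ or $1$: there $\sin(\alpha D)$ becomes small, but the smallness is compensated either by the factor $D$ in $(\pi-\alpha)D$ (for $D$ small) or by the prefactor $(1-D)$ together with the lower bound $\sin(\alpha D) \gtrsim \alpha(1-D)$ (for $D$ close to $1$). This is the only place where configurations with some $u_j$ near $0$ or $1$ appear, and it is precisely the $W_m$-weight structure that keeps their contribution $o(1)$ uniformly in $\phi \in \Diff^1(\T)$.
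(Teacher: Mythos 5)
Your proof is correct, but it follows a genuinely different route from the paper's. Both arguments start with the same reduction: the $\sqrt{\phi'(s_j)\phi'(t_j)}$ factors cancel and, using $\sin(\pi u)=\sin(\pi(1-u))$, the claim becomes a statement about the weighted sum $\sum_m W_m\prod_j R_j(D_j^{(m)})^{l_j}$ with $W_m=\phi(r_{m+1})-\phi(r_m)$. From there the paper argues by a threshold decomposition: it sets $\delta=\sqrt{\pi-\alpha}$, discards the indices $m$ with $W_m\le\delta$ using only the crude bound $R_j\le\pi/\alpha$ (cost $O(\delta)$), and on the remaining indices uses that $D_j^{(m)}\le 1-\delta$ together with a uniform logarithmic estimate ($|\log(\sin(\pi x)/\sin(\alpha x))|\le(\pi-\alpha)/\sin(\pi\delta/2)$, Statement \ref{stmSinRatioUniformBound}) to get a multiplicative $1+o(1)$; this yields the result with rate $O(\sqrt{\pi-\alpha})$. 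You avoid the threshold entirely: your exact tiling identities $\sum_{m:D_j^{(m)}=u_j}W_m=1-u_j$ and $\sum_{m:D_j^{(m)}=1-u_j}W_m=u_j$, combined with the pointwise estimate $(1-D)\,|R_j(D)-1|\le\pi^2(\pi-\alpha)/(2\alpha^2)$ (whose derivation via $\sin y\ge\tfrac{2}{\pi^2}y(\pi-y)$ is correct) and a telescoping bound, give the stronger conclusion $1+O(\pi-\alpha)$ uniformly in $\phi$, with explicit constants depending only on $N$ and $\sum_j l_j$. The one blemish is the inequality $|a^{l}-1|\le l(\pi/\alpha)^{l-1}|a-1|$ on $[0,\pi/\alpha]$: it is valid for $l\ge1$ but fails for $0<l<1$ when $a$ is near $0$ (which can occur, since $R_j(D)=\sin(\pi D)/\sin(\alpha D)\to0$ as $D\to1$ for fixed $\alpha<\pi$); since the lemma allows arbitrary positive exponents, you should replace it for $l\le1$ by the elementary bound $|a^{l}-1|\le|a-1|$ (valid for all $a\ge0$), which leaves your constant $C$ bounded as $\alpha\to\pi$ and the rest of the argument untouched.
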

\begin{proof}
We assume that $\alpha>\pi - \frac{1}{100}$.
Denote
\begin{equation}
s_j^m = r_{m+1}+\sh{s_j}{r_{m+1}}, \qquad t_j^m = r_{m+1}+\sh{t_j}{r_{m+1}} .
\end{equation}

Note that $\forall m, j$,
\begin{equation}
\frac{\obs{\sh{\phi}{r_{m+1}}}{\sh{s_j}{r_{m+1}}}{\sh{t_j}{r_{m+1}}}_{\alpha}}{\obs{\phi}{s_j}{t_j}} 
= \frac{\alpha\,\sin\Big(\pi\big[\phi(t_j^m)-\phi(s_j^m)\big]\Big)}
{\pi\,\sin\Big(\alpha\big[\phi(t_j^m)-\phi(s_j^m)\big]\Big)}.
\end{equation}
Thus, it is sufficient to prove that
\begin{equation}
\sum_{m=1}^{2N}\left[\Big(\phi(r_{m+1})-\phi(r_{m})\Big) 
\prod_{j=1}^{N}\frac{\sin^{l_j}\Big(\pi\big[\phi(t_j^m)-\phi(s_j^m)\big]\Big)}{\sin^{l_j}\Big(\alpha\big[\phi(t_j^m)-\phi(s_j^m)\big]\Big)}\right]
\underset{\alpha\to \pi}{=}
1+o(1),
\end{equation}
We take $\delta = \sqrt{\pi-\alpha} \in (0, 1/10)$.
Let $X = X(\phi) = \left\{m \leq 2N: \Big(\phi(r_{m+1})-\phi(r_{m})\Big) >\delta\right\}$.

Note that since $\sin (y)/y$ is decreasing in $y$ we have
\begin{equation}
\frac{\sin\Big(\pi\big[\phi(t_j^m)-\phi(s_j^m)\big]\Big)}
{\sin\Big(\alpha\big[\phi(t_j^m)-\phi(s_j^m)\big]\Big)}\leq \frac{\pi}{\alpha}.
\end{equation}
Therefore, for any $\phi\in \Diff^1(\T)$,
\begin{equation}
\sum_{m \notin X}\left[
\Big(\phi(r_{m+1})-\phi(r_{m})\Big) 
\prod_{j=1}^{N}\frac{\sin^{l_j}\Big(\pi\big[\phi(t_j^m)-\phi(s_j^m)\big]\Big)}{\sin^{l_j}\Big(\alpha\big[\phi(t_j^m)-\phi(s_j^m)\big]\Big)}\right]
\leq 2 N \left(\frac{\pi}{\alpha}\right)^{\sum_{j=1}^N l_j}\delta .
\end{equation}

Moreover, observe that for $m\in X$ we have $\phi(t_j^m)-\phi(s_j^m)\leq 1-\delta$, so applying Statement \ref{stmSinRatioUniformBound} we get
\begin{align}
\left|\log \prod_{j=1}^{N}
\frac{\sin^{l_j}\Big(\pi\big[\phi(t_j^m)-\phi(s_j^m)\big]\Big)}{\sin^{l_j}\Big(\alpha\big[\phi(t_j^m)-\phi(s_j^m)\big]\Big)}\right|
&\leq \frac{(\pi-\alpha)\sum_{j=1}^N l_j}{\sin(\pi\delta /2)}, \qquad \text{for }m\in X,\\
\sum_{m \in X} \Big(\phi(r_{m+1})-\phi(r_{m})\Big) &\geq 1-2N\delta,
\end{align}
which finishes the proof.
\end{proof}

\begin{prp}\label{prpMainExpectObservRegular}
Let $\left\{s_j\right\}_{j=1}^N$ and $\left\{t_j\right\}_{j=1}^N$ be points on $\T$, and $\left\{l_j\right\}_{j=1}^{N}$ be positive numbers. 
Then
\begin{multline}
\int_{\Diff^1(\T)/\SL(2, \R)}
\left[\prod_{j=1}^{N}\obs{\phi}{s_j}{t_j}^{l_j}\right]\d \FMeas{\sigma^2}\big(\phi\big)\\
=
\lim_{\alpha\to\pi-} \frac{4\pi(\pi-\alpha)}{\sigma^2} 
\int_{\T}\left[  \int_{\Diff^1(\T)} \phi'(0) \prod_{j=1}^{N}\obs{\phi}{\sh{s_j}{x}}{\sh{t_j}{x}}_{\alpha}^{l_j} \d\measN{\sigma^2}{\alpha}(\phi) \right] \d x.
\end{multline}
\end{prp}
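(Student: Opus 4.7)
My plan is to combine Proposition \ref{prpIntegralRegularization} with Lemma \ref{lmmObsSymmetrizationApprox} and the rotational invariance of the measure $\d\meas{\sigma^2}{\alpha}$. Applying Proposition \ref{prpIntegralRegularization} to the $\SL(2,\R)$-invariant functional $F(\phi) := \prod_{j=1}^{N}\obs{\phi}{s_j}{t_j}^{l_j}$ rewrites the left-hand side as $\lim_{\alpha\to\pi-}\frac{4\pi(\pi-\alpha)}{\sigma^2}\int F(\phi)\d\meas{\sigma^2}{\alpha}(\phi)$. Lemma \ref{lmmObsSymmetrizationApprox} replaces $F(\phi)$, uniformly in $\phi$ and up to a factor $(1+o(1))$, by the sum $\sum_{m=1}^{2N}\bigl(\phi(r_{m+1})-\phi(r_m)\bigr)\prod_{j}\obs{\sh{\phi}{r_{m+1}}}{\sh{s_j}{r_{m+1}}}{\sh{t_j}{r_{m+1}}}_{\alpha}^{l_j}$. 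Since the limit on the left is finite, the $(1+o(1))$ error can be absorbed, so it suffices to prove the exact identity
\begin{multline*}
\sum_{m=1}^{2N}\int \bigl(\phi(r_{m+1})-\phi(r_m)\bigr)\prod_{j}\obs{\sh{\phi}{r_{m+1}}}{\sh{s_j}{r_{m+1}}}{\sh{t_j}{r_{m+1}}}_{\alpha}^{l_j}\d\meas{\sigma^2}{\alpha}(\phi)\\
=\int_{\T}\int \phi'(0)\prod_{j}\obs{\phi}{\sh{s_j}{x}}{\sh{t_j}{x}}_{\alpha}^{l_j}\d\meas{\sigma^2}{\alpha}(\phi)\d x.
\end{multline*}

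The key input is that $\d\meas{\sigma^2}{\alpha}$ is invariant under the shift $\phi\mapsto\sh{\phi}{y}$ for every $y\in\T$. This reduces, via $\phi = \A(\xi)$ and the shift-invariance of $\int\phi'^{\,2}$, to the rotational invariance of the unnormalised Brownian bridge $\d\WS{\sigma^2}{0}{1}$ under $\xi\mapsto\xi(\cdot+y)-\xi(y)$ (periodically extended), which follows from Proposition \ref{prop:BBcomposition} by splitting the bridge at $y$, interchanging the two halves, and exploiting the $b\leftrightarrow -b$ symmetry of $\d\WS{\sigma^2}{b}{\cdot}$. To rewrite the sum as an integral over $\T$, I would expand $\phi(r_{m+1})-\phi(r_m)=\int_{r_m}^{r_{m+1}}\phi'(x)\d x$, swap the order of integration, and for each $x \in (r_m, r_{m+1})$ apply the shift $\phi\mapsto\sh{\phi}{-x}$. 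Using $(\sh{\phi}{-x})'(x)=\phi'(0)$ and the cocycle identity $\sh{(\sh{\phi}{-x})}{r_{m+1}}=\sh{\phi}{r_{m+1}-x}$, the $m$-th integrand then transforms into $\phi'(0)\prod_{j}\obs{\sh{\phi}{r_{m+1}-x}}{\sh{s_j}{r_{m+1}}}{\sh{t_j}{r_{m+1}}}_{\alpha}^{l_j}$.

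It then remains to check the pointwise identity $\obs{\sh{\phi}{r_{m+1}-x}}{\sh{s_j}{r_{m+1}}}{\sh{t_j}{r_{m+1}}}_{\alpha}=\obs{\phi}{\sh{s_j}{x}}{\sh{t_j}{x}}_{\alpha}$ for every $x\in(r_m,r_{m+1})$ and every $j$. The crucial point is that as $x$ varies over the open interval $(r_m,r_{m+1})$ no point among $\{s_j,t_j\}$ is crossed, so the ordering between $s_j-x$ and $t_j-x$ modulo $1$ is stable; consequently $(\sh{s_j}{x},\sh{t_j}{x})$ is a common shift of $(\sh{s_j}{r_{m+1}},\sh{t_j}{r_{m+1}})$ by $r_{m+1}-x$. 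The derivative factors then match by periodicity of $\phi'$, while both sine-arguments equal $\phi(\tau_j-x)-\phi(\sigma_j-x)$ modulo $\Z$, where $\{\sigma_j,\tau_j\}=\{s_j,t_j\}$ reflects the chosen ordering; since both values lie in $[0,1)$ by monotonicity of $\phi$ and of $\sh{\phi}{r_{m+1}-x}$, they coincide. Summing over $m$ tiles $\T$ by the intervals $(r_m,r_{m+1})$ and produces the right-hand side. I expect the main obstacle to be the careful bookkeeping of the integer offsets introduced by the periodic extension $\phi(\tau+1)=\phi(\tau)+1$ when unfolding the shifted arguments on both sides of this last identity.
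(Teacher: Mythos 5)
Your proposal is correct and follows essentially the same route as the paper: Proposition \ref{prpIntegralRegularization} plus the uniform approximation of Lemma \ref{lmmObsSymmetrizationApprox}, rewriting each increment $\phi(r_{m+1})-\phi(r_m)$ as $\int_{r_m}^{r_{m+1}}\phi'(x)\d x$, swapping integrals by Tonelli, and invoking the invariance of $\d\meas{\sigma^2}{\alpha}$ under the rotations $\sh{\,}{x}$. Your pointwise identity $\obs{\sh{\phi}{r_{m+1}-x}}{\sh{s_j}{r_{m+1}}}{\sh{t_j}{r_{m+1}}}_{\alpha}=\obs{\phi}{\sh{s_j}{x}}{\sh{t_j}{x}}_{\alpha}$ is just the paper's observation that $\obs{\sh{\phi}{x}}{\sh{s_j}{x}}{\sh{t_j}{x}}_{\alpha}$ is constant in $x$ on $(r_m,r_{m+1}]$, read after the change of variables, and your sketch of rotation invariance via Proposition \ref{prop:BBcomposition} is a correct justification of a fact the paper uses without comment.
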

\begin{proof}
From Proposition \ref{prpIntegralRegularization} and Lemma \ref{lmmObsSymmetrizationApprox} we get
\begin{multline}\label{eqPrpMainExpectObservRegularFirstStep}
\int_{\Diff^1(\T)/\SL(2, \R)}
\left[\prod_{j=1}^{N}\obs{\phi}{s_j}{t_j}^{l_j}\right]\d \FMeas{\sigma^2}\big(\phi\big)
\\
=
\lim_{\alpha\to\pi-} \frac{4\pi(\pi-\alpha)}{\sigma^2} 
\int_{\Diff^1(\T)} \sum_{m=1}^{2N}\left[\Big(\phi(r_{m+1})-\phi(r_{m})\Big) 
\prod_{j=1}^{N}\obs{\sh{\phi}{r_{m+1}}}{\sh{s_j}{r_{m+1}}}{\sh{t_j}{r_{m+1}}}_{\alpha}^{l_j}\right]
\d \measN{\sigma^2}{\alpha}(\phi)
\end{multline}
Moreover, since
\begin{equation}
\forall x \in (r_m, r_{m+1}]:\qquad
\obs{\sh{\phi}{x}}{\sh{s_j}{x}}{\sh{t_j}{x}}_{\alpha}
=
\obs{\sh{\phi}{r_{m+1}}}{\sh{s_j}{r_{m+1}}}{\sh{t_j}{r_{m+1}}}_{\alpha},
\end{equation}
we get
\begin{multline}
 \Big(\phi(r_{m+1})-\phi(r_{m})\Big) 
\prod_{j=1}^{N}\obs{\sh{\phi}{r_{m+1}}}{\sh{s_j}{r_{m+1}}}{\sh{t_j}{r_{m+1}}}_{\alpha}^{l_j}\\
=
\int_{r_m}^{r_{m+1}}  \phi'(x) \prod_{j=1}^{N}\obs{\sh{\phi}{x}}{\sh{s_j}{x}}{\sh{t_j}{x}}_{\alpha}^{l_j} \d x.
\end{multline}
Combining this with \eqref{eqPrpMainExpectObservRegularFirstStep} and Tonelli's Theorem, we obtain that
\begin{multline}
\int_{\Diff^1(\T)/\SL(2, \R)}
\left[\prod_{j=1}^{N}\obs{\phi}{s_j}{t_j}^{l_j}\right]\d \FMeas{\sigma^2}\big(\phi\big)\\
=
\lim_{\alpha\to\pi-} \frac{4\pi(\pi-\alpha)}{\sigma^2} 
\int_{\T}\left[  \int_{\Diff^1(\T)} \big(\sh{\phi}{x}\big)'(0) \prod_{j=1}^{N}\obs{\sh{\phi}{x}}{\sh{s_j}{x}}{\sh{t_j}{x}}_{\alpha}^{l_j} \d\measN{\sigma^2}{\alpha}(\phi) \right] \d x,
\end{multline}
which together with invariance of $\d\measN{\sigma^2}{\alpha}$ under $\sh{\,}{x}$ finishes the proof. 
\end{proof}

\subsection{Observables and the weight}
Now we reduce the integral with respect to $\d\measN{\sigma^2}{\alpha}$ obtained in Proposition \ref{prpMainExpectObservRegular} to a calculation of an appropriate integral with respect to $\d\measN{\sigma^2}{0}$.
\begin{prp}\label{prpObsWeightChange}
Let $\Big\{\obs{\phi}{s_{j}}{t_j}_{\alpha}\Big\}_{j=1}^{N}$ be a set of observables, and $\left\{l_j\right\}_{j=1}^N$ be real numbers. Then
\begin{multline}
\int_{\Diff^1(\T)} \phi'(0)\prod_{j=1}^N  \obs{\phi}{s_{j}}{t_j}_{\alpha}^{l_j} \d \measN{\sigma^2}{\alpha}(\phi) \\
=
\int_{\Diff^1(\T)}  \obs{\phi}{0}{1}_0 \exp\left\{\frac{8\sin^2\frac{\alpha}{2}}{\sigma^2}\, \obs{\phi}{0}{1}_0 \right\}
 \prod_{j=1}^N  \obs{\phi}{s_{j}}{t_j}_{0}^{l_j} \d \measN{\sigma^2}{0}(\phi) .
\end{multline}
\end{prp}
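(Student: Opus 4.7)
The natural strategy is a Girsanov-type change of variables based on Lemma \ref{crlBBMMeasureChange}. I look for a single $f\in\Diff^3[0,1]$ and substitute $\phi=f\circ\tilde\phi$, in the hope that one judicious choice simultaneously (i) converts $\obs{\phi}{s_j}{t_j}_\alpha$ into $\obs{\tilde\phi}{s_j}{t_j}_0$, (ii) cancels the weight $\exp\{\frac{2\alpha^2}{\sigma^2}\int\phi'^2\}$ that distinguishes $\d\meas{\sigma^2}{\alpha}$ from $\d\meas{\sigma^2}{0}$, and (iii) generates the new exponential weight $\exp\{\frac{8\sin^2(\alpha/2)}{\sigma^2}\obs{\phi}{0}{1}_0\}$ out of the Girsanov boundary term.

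The candidate I propose is
\[
f(y)\;=\;\tfrac{1}{\alpha}\arctan\!\bigl(\tan(\alpha/2)(2y-1)\bigr)\,+\,\tfrac{1}{2}.
\]
Writing $f=g\circ N$ with $N(y)=\tan(\alpha/2)(2y-1)$ affine and $g(z)=\tfrac{1}{\alpha}\arctan(z)+\tfrac{1}{2}$, a short direct calculation yields:
(a) $f\in\Diff^3[0,1]$ for $\alpha\in[0,\pi)$, and the symmetrization $(2y-1)$ gives $f'(0)=f'(1)=\sin\alpha/\alpha$;
(b) $\Schw_f(y)=-2\alpha^2 f'(y)^2$, via the chain rule for the Schwarzian together with $\Schw_{\arctan}(z)=-2/(1+z^2)^2$;
(c) $f''(0)/f'(0)=4\sin^2(\alpha/2)$ and $f''(1)/f'(1)=-4\sin^2(\alpha/2)$;
(d) $\obs{f\circ\tilde\phi}{s}{t}_\alpha=\obs{\tilde\phi}{s}{t}_0$, using the addition formula $\sin(\arctan A-\arctan B)=(A-B)/\sqrt{(1+A^2)(1+B^2)}$ together with Möbius invariance of $\obs{\cdot}{\cdot}{\cdot}_0$ (Proposition \ref{prpObsSLInvar}) applied to the affine $N$.

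\textbf{Execution.} Property (a) gives $b=\log f'(1)-\log f'(0)=0$, so Lemma \ref{crlBBMMeasureChange} with $a=0$ keeps the reference Brownian bridge $\WS{\sigma^2}{0}{1}$ intact after the substitution $\phi=f\circ\tilde\phi$. Since $\int\phi'^2=\int f'(\tilde\phi)^2\tilde\phi'^2$, the Girsanov integral term $\frac{1}{\sigma^2}\int\Schw_f(\tilde\phi)\tilde\phi'^2$ exactly cancels the measure weight $\frac{2\alpha^2}{\sigma^2}\int\phi'^2$ by (b). The Girsanov boundary term $\frac{1}{\sigma^2}[\tfrac{f''(0)}{f'(0)}\tilde\phi'(0)-\tfrac{f''(1)}{f'(1)}\tilde\phi'(1)]$ simplifies by (c) to $\frac{4\sin^2(\alpha/2)}{\sigma^2}[\tilde\phi'(0)+\tilde\phi'(1)]$. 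Under $\WS{\sigma^2}{0}{1}$ one has $\tilde\xi(0)=\tilde\xi(1)=0$, so $\tilde\phi'(0)=\tilde\phi'(1)=\obs{\tilde\phi}{0}{1}_0$, and this boundary term becomes $\frac{8\sin^2(\alpha/2)}{\sigma^2}\obs{\tilde\phi}{0}{1}_0$, precisely the exponential weight on the right-hand side. The remaining prefactor is $\phi'(0)\cdot(f'(0)f'(1))^{-1/2}=f'(0)\tilde\phi'(0)\cdot\alpha/\sin\alpha=\tilde\phi'(0)=\obs{\tilde\phi}{0}{1}_0$. Finally (d) replaces each $\obs{\phi}{s_j}{t_j}_\alpha$ by $\obs{\tilde\phi}{s_j}{t_j}_0$; renaming the dummy variable $\tilde\phi\mapsto\phi$ delivers the right-hand side.

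\textbf{Main obstacle.} All the genuine content sits in the guess of $f$: properties (a)--(d) are individually delicate, yet one single function realizes all four. In particular, the half-angle factor $\sin^2(\alpha/2)$ on the right-hand side (rather than, say, $\sin^2\alpha$ or $\tan^2\alpha$) is traceable to (c), which in turn relies on the centred shift $(2y-1)$ inside the $\arctan$; had one instead chosen $f(y)=\arctan(y\tan\alpha)/\alpha$, the measure would be shifted to $\WS{\sigma^2}{-b}{1}$ and the resulting boundary factor would be $2\tan^2\alpha$ instead of $8\sin^2(\alpha/2)$. Once $f$ is in hand, the proof is pure bookkeeping inside Girsanov's formula and requires no further estimates; it is valid for $\alpha\in[0,\pi)$, which is all that is needed since the proposition feeds into the regularisation $\alpha\nearrow\pi$ of Proposition \ref{prpIntegralRegularization}.
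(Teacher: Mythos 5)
Your proof is correct and is essentially the paper's argument run in the opposite direction: the paper applies the Girsanov-type Lemma \ref{crlBBMMeasureChange} with $f(t)=\tfrac{1}{2}\bigl[\cot(\tfrac{\alpha}{2})\tan\bigl(\alpha(t-\tfrac{1}{2})\bigr)+1\bigr]$, which has $\Schw_f=2\alpha^2$ and turns $0$-observables into $\alpha$-observables, whereas your $f$ is precisely the inverse of that map, so you transform the $\alpha$-integral directly into the $0$-integral rather than the reverse. Your properties (a)--(d) and the ensuing bookkeeping (cancellation of the $\tfrac{2\alpha^2}{\sigma^2}\int\phi'^2$ weight, the $\tfrac{8\sin^2(\alpha/2)}{\sigma^2}\,\obs{\tilde\phi}{0}{1}_0$ boundary term, and the prefactor) all check out, so the two proofs coincide up to this change of direction.
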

\begin{proof}
Take 
\begin{equation}
f(t) = \frac{1}{2}\left[
\frac{1}{\tan \frac{\alpha}{2}} \tan\left(\alpha\left(t-\frac{1}{2}\right)\right)+1
\right].
\end{equation}
It is easy to check that
\begin{equation}
\Schw_f(t) = 2\alpha^2,
\qquad
f'(0) = f'(1) = \frac{\alpha}{\sin \alpha},
\qquad
-\frac{f''(0)}{f'(0)} = \frac{f''(1)}{f'(1)} = 2\alpha \tan \frac{\alpha}{2}.
\end{equation}
Thus, it follows from Lemma \ref{crlBBMMeasureChange} that for any non-negative continuous functional $F$ on $\Diff^1(\T)$ we have
\begin{multline} \label{eqLemmaMeasAlphaCalculationMeasureChange}
\int_{\Diff^1(\T)} F(\phi) \d\measN{\sigma^2}{0}(\phi) \\
= \frac{\sin \alpha}{\alpha}  \int_{\Diff^1(\T)} F(f\circ \phi) \,
\exp\left\{-\frac{4\alpha}{\sigma^2} \tan\frac{\alpha}{2} 
\cdot \phi'(0)+\frac{2\alpha^2}{\sigma^2}\int_0^1 \phi'^{\, 2}(t) \d t\right\}
\d\measN{\sigma^2}{0}(\phi).
\end{multline}
Now we choose $F$ to be
\begin{equation}
F(\phi) =  \phi'(0)\,
\exp\left\{\frac{8 \sin^2 \frac{\alpha}{2}}{\sigma^2}\cdot \phi'(0)\right\}
 \prod_{j=1}^N  \obs{\phi}{s_{j}}{t_j}_{0}^{l_j},
\end{equation}
which guarantees that 
\begin{equation}
F(f \circ \phi) = \frac{\alpha}{\sin\alpha}\cdot \phi'(0)\,
\exp \left\{\frac{4\alpha}{\sigma^2} \tan\frac{\alpha}{2} \cdot \phi'(0)\right\}
 \prod_{j=1}^N  \obs{\phi}{s_{j}}{t_j}_{\alpha}^{l_j},
\end{equation} 
whenever $\phi(0)=0$.
Observing that
\begin{equation}
\phi'(0) = \obs{\phi}{0}{1}_0
\end{equation}
 gives the desired claim.
\end{proof}

\section{Main calculations}
\label{sect_main_calculations}
The main goal of this Section is to calculate the integral involving $\obs{\phi}{s}{t}_0$ with respect to $\d\measN{\sigma^2}{0}$ appearing Proposition~\ref{prpObsWeightChange}. 
An important feature of $\obs{\phi}{s}{t}_0$ that we use here is that it is homogeneous in $\phi$. 
In other words, it does not change if multiply $\phi$ by a constant. 
This means that instead of taking $\phi = \A(\xi)$, as in \eqref{defMeasureMeasAlpha}, we can consider $\B(\xi)$, where we define
\begin{equation}
\B(\xi)(t):= \B_{\xi}(t):= \int_{0}^{t} e^{\xi(s)}\d s.
\end{equation}
An advantage of using $\B$ instead of $\A$ is that it is given by a more "local" and simpler expression.
We emphasise again that
\begin{equation}
\obs{\A_{\xi}}{s}{t}_0 = \obs{\B_{\xi}}{s}{t}_0.
\end{equation}
Therefore, in order to calculate the integral from the right-hand side of Proposition \ref{prpObsWeightChange} it is sufficient to study integrals involving $\obs{\B_{\xi}}{\cdot}{\cdot}_0$ with respect to $\d\WS{\sigma^2}{0}{1}(\xi)$.

\subsection{Key Lemma}
The following Lemma is key for the main calculation. 
Informally, it means that when taking integrals with respect to $\d\WS{\sigma^2}{a}{T}$ we can trade exponentials of $\obs{\B_{\xi}}{\cdot}{\cdot}_0$ for the value of parameter $a$.
\begin{lmm}\label{lmmObsExpMomentInsert}
Fix $T>0$.
Let $\left\{s_j\right\}_{j=1}^N$ and $\left\{t_j\right\}_{j=1}^N$ be points on $[0, T]$, and $\left\{l_j\right\}_{j=1}^N$ be positive numbers.
Then for any $z<0$, we have
\begin{equation}
\int 
\exp\left\{
\frac{z}{\sigma^2}\, \obs{\B_{\xi}}{0}{T}_0
\right\}
\prod_{j=1}^N \obs{\B_{\xi}}{s_j}{t_j}^{l_j}_0
\d \WS{\sigma^2}{a}{T}(\xi)
=
\int 
\prod_{j=1}^N \obs{\B_{\xi}}{s_j}{t_j}^{l_j}_0
\d \WS{\sigma^2}{b}{T}(\xi),
\end{equation}
where $b=2\arccosh\left[\cosh\left(\frac{a}{2}\right)-\frac{z}{4}\right]$.
\end{lmm}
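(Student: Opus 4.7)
My plan is to deduce the identity from the change-of-measure formula of Lemma \ref{crlBBMMeasureChange}, specialised to a carefully chosen M\"{o}bius diffeomorphism. The first step is to reduce to $T=1$ by the time-rescaling $\tilde{\xi}(s):=\xi(sT)$, which sends $\d\WS{\sigma^2}{a}{T}$ to $\d\WS{T\sigma^2}{a}{1}$ and rescales every cross-ratio by $1/T$ (in particular, $\obs{\B_\xi}{0}{T}_0 = T^{-1}\obs{\B_{\tilde{\xi}}}{0}{1}_0$). The factor $T^{-1}$ combines with $1/\sigma^2$ to give $z/(T\sigma^2)$, matching the $T=1$ statement at variance $T\sigma^2$, and the defining relation $\cosh(b/2) = \cosh(a/2) - z/4$ does not involve $T$, so it survives the reduction intact.

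With $T=1$ fixed, I take $f(x) := x/(\lambda x + (1-\lambda))$ with $\lambda := 1 - e^{(b-a)/2}$. This is a M\"{o}bius diffeomorphism of $[0,1]$ fixing both endpoints, satisfying $\log f'(1) - \log f'(0) = b-a$. Applying Lemma \ref{crlBBMMeasureChange} to this $f$ (with the roles of its $a$ and $b$ parameters played by $b$ and $b-a$ respectively) pushes $\d\WS{\sigma^2}{b}{1}$ forward through $\Psi_f := \A^{-1}\circ L_{f^{-1}}\circ \A$ onto a measure absolutely continuous with respect to $\d\WS{\sigma^2}{a}{1}$. Because $\Schw_f \equiv 0$ for a M\"{o}bius $f$, the bulk integral term in the Radon--Nikodym factor vanishes and the prefactor $1/\sqrt{f'(0)f'(1)}$ equals $1$; the remaining boundary terms, evaluated on bridges with $\xi(1)=a$ (so that $\A_\xi'(0)=1/\B_\xi(1)$ and $\A_\xi'(1)=e^a/\B_\xi(1)$), collapse via the identities $1+e^a = 2e^{a/2}\cosh(a/2)$ and $e^{(a-b)/2} + e^{(a+b)/2} = 2e^{a/2}\cosh(b/2)$ to $\frac{4 e^{a/2}}{\sigma^2\B_\xi(1)}\bigl[\cosh(a/2) - \cosh(b/2)\bigr]$. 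Under the lemma's assumption $\cosh(b/2) = \cosh(a/2) - z/4$ this is precisely $(z/\sigma^2)\,\obs{\B_\xi}{0}{1}_0$.

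The final ingredient is that $\Psi_f$ preserves every observable $\obs{\B_\xi}{s}{t}_0$ when $f$ is M\"{o}bius. A direct computation gives $\B_{\Psi_f(\xi)}(t) = f'(0)\,\B_\xi(1)\,f^{-1}\bigl(\B_\xi(t)/\B_\xi(1)\bigr)$, so $\B_{\Psi_f(\xi)}$ is obtained from $\B_\xi$ by post-composition with a M\"{o}bius transformation of $\R$ (whose coefficients depend on $\B_\xi(1)$, but this is irrelevant). The M\"{o}bius invariance of the cross-ratio $\obs{\cdot}{s}{t}_0$ on the line, already exploited in the proof of Proposition \ref{prpObsSLInvar}, then yields $\obs{\B_{\Psi_f(\xi)}}{s}{t}_0 = \obs{\B_\xi}{s}{t}_0$. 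Testing the pushforward identity above against the $\Psi_f$-invariant functional $\prod_j\obs{\B_\xi}{s_j}{t_j}^{l_j}_0$ gives the lemma. The main technical hurdle is the algebraic reassembly of the four boundary exponentials into $2e^{a/2}[\cosh(a/2)-\cosh(b/2)]$; everything else (the time-rescaling, the identification of the induced M\"{o}bius map on $\R$, and the bookkeeping of the pushforward) is routine.
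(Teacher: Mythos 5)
Your proof is correct and follows essentially the same route as the paper: rescale to $T=1$, apply Lemma \ref{crlBBMMeasureChange} with an endpoint-fixing M\"obius map (so $\Schw_f\equiv 0$ and $f'(0)f'(1)=1$), use fractional-linear invariance of the cross-ratio observables, and match the boundary terms to the exponential weight via $\cosh(a/2)-\cosh(b/2)=z/4$. The only differences---reading the change of measure from the $b$-bridge onto the $a$-bridge instead of shifting $a\mapsto a+2\lambda$, and a different parametrisation of the same family of M\"obius maps---are cosmetic.
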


\begin{proof}

For $s\in [0, 1]$ define $\widetilde{\xi}(s) = \xi(s\, T)$. 
If $\xi$ is distributed according to $\d \WS{\sigma^2}{p}{T}(\xi)$ for some $p$, then $\widetilde{\xi}$ is distributed according to $\d \WS{\sigma^2 T}{p}{1}(\widetilde{\xi})$.

Notice that
\begin{equation}
\obs{\B_{\xi}}{s}{t}_0 = \frac{1}{T}\obs{\A(\widetilde{\xi})}{s/T}{t/T}_0.
\end{equation}
Therefore,
\begin{multline}
\int 
\exp\left\{
\frac{z}{\sigma^2}\,\obs{\B_{\xi}}{0}{T}_0
\right\}
\prod_{j=1}^N \obs{\B_{\xi}}{s_j}{t_j}^{l_j}_0
\d \WS{\sigma^2}{a}{T}(\xi) \\
=T^{-\sum_{j=1}^N l_j}
\int 
\exp\left\{
\frac{z}{\sigma^2 T}\, \obs{\A(\widetilde{\xi})}{0}{1}_0
\right\}
\prod_{j=1}^N \obs{\A(\widetilde{\xi})}{s_j/T}{t_j/T}^{l_j}_0
\d \WS{\sigma^2 T}{a}{1}(\widetilde{\xi}) 
\end{multline}

Consider $f\in \Diff^3[0,1]$ given by
\begin{equation}
f(t) = \frac{e^{\lambda}\, t}{(e^{\lambda}-1)t+1},
\end{equation}
with $\lambda$ to be chosen later.
It is easy to see that
\begin{equation}
f'(t) = \frac{e^{\lambda}}{((e^{\lambda}-1)t+1)^2},
\qquad
f''(t) = -\frac{2e^{\lambda}(e^{\lambda}-1)}{\big((e^{\lambda}-1) t+1\big )^3},
\qquad
\Schw_f(t) = 0.
\end{equation}
Moreover,
\begin{equation}
\frac{f''(0)}{f'(0)} = 2-2e^{\lambda}, 
\qquad
\frac{f''(1)}{f'(1)} = 2e^{-\lambda}-2.
\end{equation}
Also notice that since $f$ is a fractional linear transformation, we get that observables are invariant under the post-composition with $f$,
\begin{equation}
\obs{\phi}{s}{t}_0 = \obs{f\circ \phi}{s}{t}_0, \qquad \text{for all } \phi\in\Diff^1(\T), s, t\in \T.
\end{equation} 
Therefore, by Lemma \ref{crlBBMMeasureChange} we get
\begin{multline}
\int 
\exp\left\{
\frac{z}{\sigma^2 T}\, \obs{\A(\widetilde{\xi})}{0}{1}_0
\right\}
\prod_{j=1}^N \obs{\A(\widetilde{\xi})}{s_j/T}{t_j/T}^{l_j}_0
\d \WS{\sigma^2 T}{a}{1}(\widetilde{\xi})  \\
= \int 
\exp\left\{
\frac{z}{\sigma^2 T} \, \obs{\A(\widetilde{\xi})}{0}{1}_0
+ \frac{2-2e^{\lambda}}{\sigma^2 T}\, \A'(\widetilde{\xi})(0)  + \frac{2-2e^{-\lambda}}{\sigma^2 T}\,\A'(\widetilde{\xi})(1)
\right\}\\
\times\prod_{j=1}^N \obs{\A(\widetilde{\xi})}{s_j/T}{t_j/T}^{l_j}_0
\d \WS{\sigma^2 T}{a+2\lambda}{1}(\widetilde{\xi}) 
\end{multline}
Notice that if $\widetilde{\xi}\sim \d \WS{\sigma^2 T}{a+2\lambda}{1}(\widetilde{\xi})$, then
\begin{equation}
\A'(\widetilde{\xi})(0) = e^{-\lambda-\frac{a}{2}}\obs{\A(\widetilde{\xi})}{0}{1}_0 ,
\qquad
\A'(\widetilde{\xi})(1) = e^{\lambda+\frac{a}{2}} \obs{\A(\widetilde{\xi})}{0}{1}_0.
\end{equation} 
Therefore,
\begin{equation}
\frac{2-2e^{\lambda}}{\sigma^2 T}\,\A'(\widetilde{\xi})(0)  
+ \frac{2-2e^{-\lambda}}{\sigma^2 T}\,\A'(\widetilde{\xi})(1) 
=
\frac{4\big[\cosh(\lambda+\frac{a}{2})-\cosh(\frac{a}{2})\big]}{\sigma^2 T} \,
\obs{\A(\widetilde{\xi})}{0}{1}_0.
\end{equation}

Taking 
\begin{equation}
\lambda =\arccosh\left[\cosh\left(\frac{a}{2}\right)-\frac{z}{4}\right]-\frac{a}{2}
\end{equation}
we get the desired claim.
\end{proof}

Now we can also prove that observables $\obs{\B_{\xi}}{\cdot}{\cdot}$ have exponential moments.
\begin{prp}\label{prpObsExpMomentBound}
Let $T>0$ and $a\in \R$. For any $0\leq s< t\leq T$ we have
\begin{equation}
\int \exp\left\{\frac{8}{\sigma^2}\, \obs{\B_{\xi}}{s}{t}_0 \right\} 
\d \WS{\sigma^2}{a}{T}(\xi) 
\leq
\frac{1}{\sqrt{2\pi T}\sigma}\exp\left(\frac{2000}{(t-s)\sigma^2}-\frac{a^2}{2 T\sigma^2}\right).
\end{equation}
In particular, the expression above is finite.
\end{prp}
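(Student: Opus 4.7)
The plan is to localize the integral to the interval $[s,t]$ via the composition property of the Brownian bridge measure, bound the resulting inner integral using an analytic extension of Lemma \ref{lmmObsExpMomentInsert}, and finish by a Gaussian convolution.

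First, applying Proposition \ref{prop:BBcomposition} twice to split $[0,T]$ into $[0,s]$, $[s,t]$, $[t,T]$, and observing that $\obs{\B_\xi}{s}{t}_0$ is invariant under adding a constant to $\xi|_{[s,t]}$ (so it depends only on the increment $c := \xi(t)-\xi(s)$), I integrate out the two outer bridges together with the value $\xi(s)$ via the standard Gaussian convolution. Writing $M(x,\tau) := (2\pi\tau\sigma^2)^{-1/2}\exp(-x^2/(2\tau\sigma^2))$ for the total mass of $\WS{\sigma^2}{x}{\tau}$, this reduces the proposition to the one-dimensional integral
\begin{equation*}
\int\exp\!\left(\tfrac{8}{\sigma^2}\obs{\B_\xi}{s}{t}_0\right)\d\WS{\sigma^2}{a}{T}(\xi) \;=\; \int_{\R} J(c)\,M(a-c,T-\Delta)\,\d c,\qquad \Delta := t-s,
\end{equation*}
where $J(c) := \int \exp(\tfrac{8}{\sigma^2}\obs{\B_\eta}{0}{\Delta}_0)\d\WS{\sigma^2}{c}{\Delta}(\eta)$.

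Second, I would establish the pointwise bound $J(c) \leq e^{2\pi^2/(\sigma^2\Delta)}M(c,\Delta)$ for all $c\in\R$. Running the proof of Lemma \ref{lmmObsExpMomentInsert} verbatim with $z=8$ and the map $f(t)=e^\lambda t/((e^\lambda-1)t+1)$ yields, for every real $\lambda$, the identity $J(c) = \int \exp(\tfrac{8+4\cosh(c/2+\lambda)-4\cosh(c/2)}{\sigma^2}\obs{\B_\eta}{0}{\Delta}_0)\d\WS{\sigma^2}{c+2\lambda}{\Delta}(\eta)$. When $\cosh(c/2)\geq 3$ a real $\lambda$ makes the bracket vanish and one reads off the exact formula $J(c) = M(b(c),\Delta)$ with $b(c):=2\arccosh[\cosh(c/2)-2]$. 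To extend to the remaining range $\cosh(c/2)<3$, I would show that the Laplace transform $z\mapsto J(c;z):=\int\exp(\tfrac{z}{\sigma^2}\obs{\B_\eta}{0}{\Delta}_0)\d\WS{\sigma^2}{c}{\Delta}(\eta)$ is entire in $z$: a Schilder-type large-deviations estimate (the cheapest path making $\B_\eta(\Delta)$ smaller than $y^{-1}$ is a square-well of depth $\log y$ with linear transitions of width of order $y^{-1}\log y$, costing a Dirichlet energy of order $y\log y$) gives the super-exponentially thin tail $P(\obs{\B_\eta}{0}{\Delta}_0>y)\leq \exp(-c_0 y\log y)$, hence finiteness of all exponential moments. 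Uniqueness of analytic continuation from $z<0$, where Lemma \ref{lmmObsExpMomentInsert} applies, then gives $J(c;8) = M(\tilde b(c),\Delta)$ with $\tilde b(c)^2 := 4\arccosh^2[\cosh(c/2)-2]$ read as the entire function of Statement \ref{stmArccoshDef} (so $\tilde b(0)^2 = -4\pi^2$). The bound reduces to the pointwise inequality $c^2 - \tilde b(c)^2 \leq 4\pi^2$ for $c\in\R$, verified by direct analysis of the even function $g(c) := c^2-\tilde b(c)^2$: $g(0)=4\pi^2$, $g(\pm 2\arccosh 3) = (2\arccosh 3)^2 < 4\pi^2$, $g(c)\to 0$ as $|c|\to\infty$, and $g$ is monotone decreasing on $[0,\infty)$.

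Substituting this bound into the reduction and using the Gaussian convolution identity $\int M(c,\Delta)M(a-c,T-\Delta)\,\d c = M(a,T)$ yields
\begin{equation*}
\int\exp\!\left(\tfrac{8}{\sigma^2}\obs{\B_\xi}{s}{t}_0\right)\d\WS{\sigma^2}{a}{T}(\xi)\;\leq\;e^{2\pi^2/(\sigma^2\Delta)}M(a,T)\;=\;\tfrac{1}{\sigma\sqrt{2\pi T}}\exp\!\left(\tfrac{2\pi^2}{\sigma^2\Delta}-\tfrac{a^2}{2T\sigma^2}\right),
\end{equation*}
and since $2\pi^2<2000$ the claim follows. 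The main obstacle is the analytic-continuation step: establishing that $J(c;z)$ is entire (via the super-exponentially thin tail) and identifying it with the formal closed form on the full range of $c$, including the regime $\cosh(c/2)<3$ where the real-$\lambda$ argument from the proof of Lemma \ref{lmmObsExpMomentInsert} breaks down and the shift $\tilde b(c)$ becomes imaginary.
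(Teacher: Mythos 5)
Your overall route is, at its core, the paper's own proof: split off $[s,t]$ with Proposition \ref{prop:BBcomposition}, use the shift identity of Lemma \ref{lmmObsExpMomentInsert} together with the explicit bridge mass to obtain the closed form $\frac{1}{\sqrt{2\pi\Delta}\sigma}\exp\left(-\tfrac{2}{\Delta\sigma^2}\arccosh^2\left[\cosh(c/2)-2\right]\right)$ (with $\Delta=t-s$) for the inner integral, and finish with an elementary lower bound of the type $\arccosh^2[\cosh x-2]\geq x^2-\mathrm{const}$ plus a Gaussian convolution. The paper phrases the reduction as ``it suffices to take $s=0$, $t=T$'' rather than as your convolution in the increment $c$, but that is the same computation; your constant $2\pi^2$ is sharper than the paper's $2000$, which comes from the cruder Statement \ref{stmArccoshOfCoshLinearBound}.

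The genuine divergence is how you justify the closed form at $z=8$, and that is where the gap sits. For $\cosh(c/2)<3$ you want to continue the identity of Lemma \ref{lmmObsExpMomentInsert} from $z<0$ to $z=8$, and for that you propose to prove that your $J(c;z)$ is entire via a Schilder-type lower-tail estimate $P\big(\obs{\B_{\eta}}{0}{\Delta}_0>y\big)\leq\exp(-c_0\, y\log y)$. As written this is an optimal-profile heuristic, not a proof, and making it rigorous is more work than the proposition itself; you flag it yourself as the main obstacle. The paper closes exactly this point much more cheaply: Lemma \ref{lmmExpMomentAprioriBound} supplies \emph{some} exponential moment, and Lemma \ref{lemmaAnalyticContinuationExponentialMoments} then upgrades it to absolute convergence on all of $\D_8$, because the explicit right-hand side is already analytic there by Statement \ref{stmArccoshDef} (as $\Real\big(\cosh(c/2)-z/4\big)>-1$ for $|z|<8$); no entire moment generating function and no large deviations are needed, and the value at $z=8$ is recovered by a monotone limit $z\to 8-$, which also handles the branch point you hit at $c=0$, where $\cosh(c/2)-2=-1$. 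Your real-shift trick at $z=8$ in the regime $\cosh(c/2)\geq 3$ is correct (the change of measure of Lemma \ref{crlBBMMeasureChange} is an identity in $[0,\infty]$ for nonnegative integrands, and the shifted side is finite), but it becomes unnecessary once the continuation is done as above. Finally, your sharpened inequality $c^2-\tilde b(c)^2\leq 4\pi^2$ is true, but the monotonicity of $g$ is asserted rather than proved; either supply the short derivative computation or simply invoke Statement \ref{stmArccoshOfCoshLinearBound}, which is all that the stated bound with the constant $2000$ requires.
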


\begin{proof}
We divide the interval $[0, T]$ into three smaller intervals $[0, s],\, [s , t]$ and $[t, T]$. 
Let $\xi_1 \in \Cfree[0, s]$, $\xi_2 \in \Cfree[0, t-s]$, and $\xi_3 \in \Cfree[0, T-t]$.
Notice that 
\begin{align}
\obs{\B(\xi_1\sqcup \xi_2 \sqcup \xi_3)}{s}{t}_0 &= \obs{\B(\xi_2)}{0}{t-s}_0.
\end{align}

Thus, using \eqref{eqConvWS} we get
\begin{multline}
\int 
\exp\left\{\frac{8}{\sigma^2}\, \obs{\B_{\xi}}{s}{t}_0 \right\} 
\d \WS{\sigma^2}{a}{T}(\xi)
= \int_{\R^2}
\left[\int \d \WS{\sigma^2}{u}{s}(\xi_1)\right]\\
\times
\left[\int  \exp\left\{\frac{8}{\sigma^2}\, \obs{\B(\xi_2)}{0}{t-s}_0 \right\} \d \WS{\sigma^2}{v}{t-s}(\xi_2)\right]\\
\times 
\left[\int \d \WS{\sigma^2}{a-u-v}{T-t}(\xi_3)\right]
\d u \d v
\end{multline}
Therefore, it is sufficient to prove the Proposition for $s=0$ and $t=T$.

\medskip

From Lemma \ref{lmmExpMomentAprioriBound} there exists $\eps >0$ such that
\begin{equation}\label{eqAprioriExpMoments}
\int \exp\left\{\eps \, \obs{\B_{\xi}}{0}{T}_0 \right\} 
\d \WS{\sigma^2}{a}{T}(\xi) <\infty.
\end{equation}
Moreover, from Lemma \ref{lmmObsExpMomentInsert} for any $z<0$ we get that 
\begin{equation}\label{eqObsZeroExpMoment}
\int 
\exp\left\{
\frac{z}{\sigma^2}\, \obs{\B_{\xi}}{0}{T}_0
\right\}
\d \WS{\sigma^2}{a}{T}(\xi) 
= \frac{1}{\sqrt{2\pi T}\sigma}\exp\left(- \frac{2}{T \sigma^2} \arccosh^2\left[\cosh\left(\frac{a}{2}\right)-\frac{z}{4}\right]\right).
\end{equation}
Using Lemma \ref{lemmaAnalyticContinuationExponentialMoments} and \eqref{eqAprioriExpMoments} we deduce that the left-hand side is absolutely convergent for all $z\in \D_8$ and the equality above holds for all $z\in \D_8$. Applying Statement \ref{stmArccoshOfCoshLinearBound} we get that for all $z\in[0,8)$,
\begin{equation}
\int 
\exp\left\{
\frac{z}{\sigma^2}\, \obs{\B_{\xi}}{0}{T}_0
\right\}
\d \WS{\sigma^2}{a}{T}(\xi) 
\leq
\frac{1}{\sqrt{2\pi T}\sigma}\exp\left(\frac{2000}{T\sigma^2}-\frac{a^2}{2 T\sigma^2}\right).
\end{equation}
Taking limit $z\to 8$ we finish the proof.
\end{proof}

\subsection{Diagrammatic representation on an interval}

Here we describe a diagrammatic representation for observables $\obs{\phi}{\cdot}{\cdot}_0$ defined on the interval $[0, T]$. This representation is similar to the one described for the circle.

Let $\left\{s_j\right\}_{l=1}^N$  and $\left\{t_j\right\}_{l=1}^N$ be points on $[0, T]$, such that $\forall j: s_j < t_j$.
Let also $\Big\{\obs{\phi}{s_{j}}{t_j}_0\Big\}_{j=1}^{N}$ be a set of observables. 
We are going to represent them as a diagram on the interval $[0, T]$.

We draw the interval $[0, T]$ and all the points $\{s_j\}_{j=1}^N$ and $\{t_j\}_{j=1}^N$ on it. 
For all $1\leq j \leq N$ we connect the point $s_j$ with the point $t_j$ with a half-circle in the upper half-plane.

\begin{defn}
We say that a set of observables $\Big\{\obs{\phi}{s_{j}}{t_j}_0\Big\}_{j=1}^{N}$ is non-interlaced if interiors of all  drawn half-circles on the corresponding diagram are pairwise non-intersecting. In other words, $\forall p, q \in \{1, \ldots, N\}$ we have that one of the following holds: $s_p\leq s_q < t_q\leq t_p$, $s_p< t_p\leq s_q< t_q$, $s_q\leq s_p < t_p\leq t_q$, or $s_q< t_q\leq s_p< t_p$.
\end{defn}

Let $\Big\{\obs{\phi}{s_{j}}{t_j}_0\Big\}_{j=1}^{N}$ be a set of non-interlaced observables. 
The $N$ drawn chords on the corresponding diagram divide the unit disk into $N+1$ connected domains, one of which is unbounded and the $N$ others are bounded.
We number the bounded domains with integers from $1$ to $N$, and assign number $0$ to the unbounded domain.
For each $m \in \{0, \ldots N\}$ we associate a Fourier variable $k_m$ to domain number $m$. We also let $\tau_m$ be the total length of all line line segments (parts of the initial interval $[0, T]$) which form the boundary of $m$-th domain (see Figure \ref{fig:observables_line_diagram_def_fourier}).

For each $j \in \{1, \ldots N\}$ we define $w_1(j)$ and $w_2(j)$ to be the Fourier variables corresponding to the domains that contain the half-circle connecting $s_j$ with $t_j$ in their boundaries (see Figure \ref{fig:observables_line_diagram_obs_fourier}). 
All formulae will be symmetric in $w_1$ and $w_2$, so the exact order does not matter.

\begin{figure}[tbh]\centering
\begin{subfigure}[t]{0.55\textwidth}
\centering
\begin{tikzpicture}
\newcommand\radius{8}
\newcommand\ra{0.43}
\newcommand\ca{0.5}
\newcommand\rb{0.19}
\newcommand\cb{0.39}
\newcommand\rc{0.1}
\newcommand\cc{0.75}
\newcommand\rd{0.1}
\newcommand\cd{0.3}

	\node[shape=circle, fill=black, scale=0.5,label={-90:$0$}] (n0) at (0, 0) {};
	\node[shape=circle, fill=black, scale=0.5,label={-90:$T$}] (nT) at (\radius, 0) {};
	\node[shape=circle, fill=black, scale=0.5,label={-90:$s_1$}] (ns1) at (\ca *\radius - \ra *\radius, 0) {};
	\node[shape=circle, fill=black, scale=0.5,label={-90:$t_1$}] (nt1) at (\ca *\radius + \ra *\radius, 0) {};
	\node[shape=circle, fill=black, scale=0.5,label={-90:$s_2=s_4$}] (ns2) at (\cb *\radius - \rb *\radius, 0) {};
	\node[shape=circle, fill=black, scale=0.5,label={-90:$t_2$}] (nt2) at (\cb *\radius + \rb *\radius, 0) {};
	\node[shape=circle, fill=black, scale=0.5,label={-90:$s_3$}] (ns3) at (\cc *\radius - \rc *\radius, 0) {};
	\node[shape=circle, fill=black, scale=0.5,label={-90:$t_3$}] (nt3) at (\cc *\radius + \rc *\radius, 0) {};
	\node[shape=circle, fill=black, scale=0.5,label={-90:$t_4$}] (nt4) at (\cd *\radius + \rd *\radius, 0) {};

  	\draw (n0) -- (nT);
  	\draw (\ca *\radius + \ra *\radius, 0) arc  (0:180:\ra *\radius);
  	\draw (\cb *\radius + \rb *\radius, 0) arc  (0:180:\rb *\radius);
  	\draw (\cc *\radius + \rc *\radius, 0) arc  (0:180:\rc *\radius);
  	\draw (\cd *\radius + \rd *\radius, 0) arc  (0:180:\rd *\radius);

  \node[label={90:$\color{\fcolor} k_0$}] (nf0) at ( 0.5* \radius , 0.45* \radius) {};
  \node[label={90:$\color{\fcolor} k_1$}] (nf1) at ( 0.55* \radius , 0.23* \radius) {};
  \node[label={90:$\color{\fcolor} k_2$}] (nf2) at ( 0.44* \radius , 0.05* \radius) {};
  \node[label={90:$\color{\fcolor} k_3$}] (nf3) at ( 0.3* \radius , 0.0* \radius) {};
  \node[label={90:$\color{\fcolor} k_4$}] (nf4) at ( 0.75* \radius , 0.0* \radius) {};

\end{tikzpicture}
\caption{An example of a diagram. Here,\\ 
$\tau_0 = s_1+(T-t_1)$, \\
$\tau_1 = (s_2-s_1)+(s_3-t_2)+(t_1-t_3)$, \\
$\tau_2 = t_2-t_4$, \\
$\tau_3 = t_4-s_4$,\\ 
$\tau_4 = t_3-s_3$.
}\label{fig:observables_line_diagram_def_fourier}
\end{subfigure}
\hspace{0.05\textwidth}
\begin{subfigure}[t]{0.36\textwidth}
\centering
\begin{tikzpicture}
\newcommand\radius{4}
 	\draw (-0.2*\radius, 0) -- (0.2*\radius, 0);
 	\draw (0.8* \radius, 0) -- (1.2*\radius, 0);
  \node[shape=circle,fill=black, scale=0.5,label={-90:$s_j$}] (ns1) at (0, 0) {};
  \node[shape=circle,fill=black, scale=0.5,label={-90:$t_j$}] (nt1) at (\radius, 0) {};
 
	\draw (0,0) arc (180: 0:0.5*\radius);

	\node[label={90:$\color{\fcolor} w_2(j)$}] (nf2) at (0.5*\radius, 0.2*\radius) {};
	\node[label={90:$\color{\fcolor} w_1(j)$}] (nf1) at (0.5*\radius, 0.55*\radius) {};

\end{tikzpicture}
\caption{We use $w_1(j)$ and $w_2(j)$ to denote the Fourier variables $k_m$ corresponding to the domains that lie on both sides of the half-circle that connects $s_j$ with $t_j$. \\
In Figure \ref{fig:observables_line_diagram_def_fourier}, for example, $w_1(2) = k_1$ and $w_2(2) = k_2$ (or vice versa).}
\label{fig:observables_line_diagram_obs_fourier}
\end{subfigure}
\caption{A diagrammatic representation of Fourier variables.}
\end{figure}

\begin{prp}\label{prpLineExpObsFormula}
Fix $T>0$, $a\in \R$, and $N\geq 0$.
Let $\Big\{\obs{\phi}{s_{j}}{t_j}_0\Big\}_{j=1}^{N}$ be a set of non-interlaced observables on $[0, T]$, 
 and $\left\{l_j\right\}_{j=1}^N$ be positive integers. 
Then,
\begin{multline} \label{eqPrpFinalResultLine}
\int 
\prod_{j=1}^N \obs{\B_{\xi}}{s_j}{t_j}^{l_j}_0
\d \WS{\sigma^2}{a}{T}(\xi)
=\int_{\R_+^{N+1}} 
\prod_{j=1}^N \frac{\Gamma\Big(\frac{l_j}{2}\pm i w_1(j) \pm i w_2(j)\Big) }{2 \pi^2\, \Gamma(l_j)} \cdot \left(\frac{\sigma^{2}}{2}\right)^{l_j}\\
\times
\exp\left(-\frac{\tau_0\sigma^2}{2}\cdot k_0^2\right)  \frac{\cos(a\, k_0)}{\pi} \d k_0 \cdot
\prod_{m=1}^{N}\exp\left(-\frac{\tau_m \sigma^2}{2}\cdot k_m^2\right)  \sinh(2\pi k_m)\, 2 k_m \d k_m,
\end{multline}
where the right-hand side converges absolutely.
Moreover, if $\forall j:$ either $s_j\neq 0$ or $t_j\neq T$, then the integral on the right-hand side converges absolutely for any $a\in \D_{2\pi}$.
\end{prp}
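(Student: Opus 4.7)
My plan is to prove the formula by induction on the number of observables $N$. The base case $N=0$ reduces to the standard Gaussian Fourier identity $\tfrac{1}{\sqrt{2\pi T}\sigma}e^{-a^2/(2T\sigma^2)}=\int_0^\infty e^{-T\sigma^2 k_0^2/2}\cos(a k_0)/\pi\,\d k_0$ combined with the total-mass formula for $\d\WS{\sigma^2}{a}{T}$ from Definition \ref{defn:BB}.

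The main engine is the computation of the single-observable moment $M_l(\tau,b):=\int\obs{\B_\xi}{0}{\tau}^{l}_0\,\d\WS{\sigma^2}{b}{\tau}(\xi)$ via a generating-function argument. By Lemma \ref{lmmObsExpMomentInsert} applied for $z<0$,
\[
\int \exp\!\left(\tfrac{z}{\sigma^2}\obs{\B_\xi}{0}{\tau}_0\right)\d\WS{\sigma^2}{b}{\tau}(\xi)=\tfrac{1}{\sqrt{2\pi\tau}\sigma}\exp\!\left(-\tfrac{2}{\tau\sigma^2}\arccosh^2\!\big[\cosh(b/2)-z/4\big]\right).
\]
Proposition \ref{prpObsExpMomentBound} combined with analytic continuation (Statement \ref{stmArccoshDef}) extends this identity to a complex disk around $z=0$, so $M_l(\tau,b)=l!\sigma^{2l}\,[z^l]$ of the right-hand side. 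Inserting the Fourier representation $\tfrac{1}{\sqrt{2\pi\tau}\sigma}e^{-(b')^2/(2\tau\sigma^2)}=\int_0^\infty e^{-\tau\sigma^2 w^2/2}\cos(b'w)/\pi\,\d w$ with $b'=2\arccosh[\cosh(b/2)-z/4]$, and then invoking a Plancherel-type identity from Appendix \ref{sect_Fourier_Calc} that expands $\cos(2w\arccosh[\cosh(b/2)-z/4])$ in powers of $z$ with coefficients given by inverse Fourier transforms in $b$ of $\Gamma(l/2\pm ik\pm iw)$ against the Plancherel measure $\sinh(2\pi w)\,2w\,\d w$, yields $M_l(\tau,b)$ in precisely the double-integral form required for a single arc in \eqref{eqPrpFinalResultLine}.

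For the inductive step, given a non-interlaced diagram with $N\geq 1$ observables I pick an innermost arc $j^*$ (which exists by the forest structure of non-interlaced arcs) and apply Proposition \ref{prop:BBcomposition} to split $\d\WS{\sigma^2}{a}{T}$ into three independent bridges $\xi_1,\xi_2,\xi_3$ on $[0,s_{j^*}],[0,\tau_{j^*}],[0,T-t_{j^*}]$ (with $\tau_{j^*}=t_{j^*}-s_{j^*}$) together with integrations over the endpoint parameters $b_1,b_2$. A direct computation with $\B$ shows $\obs{\B_\xi}{s_{j^*}}{t_{j^*}}_0=\obs{\B_{\xi_2}}{0}{\tau_{j^*}}_0$, so the $\xi_2$-integration produces $M_{l_{j^*}}(\tau_{j^*},b_2)$. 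Observables on arcs disjoint from $j^*$ depend only on $\xi_1$ or only on $\xi_3$, while observables on arcs enclosing $j^*$ depend additionally on $b_1+b_2$ and on the scalar $e^{b_1}\B_{\xi_2}(\tau_{j^*})=e^{b_1+b_2/2}/\obs{\B_{\xi_2}}{0}{\tau_{j^*}}_0$. Substituting the Fourier form of $M_{l_{j^*}}$ and integrating over $b_2$ against $\cos(b_2 k_{j^*})$ produces the Plancherel weight $\sinh(2\pi k_{j^*})\,2k_{j^*}\,\d k_{j^*}$ attached to the inner domain of arc $j^*$; what remains is a cross-ratio correlation on the concatenated bridge on $[0,s_{j^*}]\cup[t_{j^*},T]$ with arc $j^*$ deleted and the adjacent outer domain's $\tau$-length increased by $\tau_{j^*}$. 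The induction hypothesis closes the argument.

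The hardest step is the enclosing-arc case: I must show that the dependence of enclosing observables on $e^{b_1}\B_{\xi_2}(\tau_{j^*})$ enters only through the M\"obius-invariant cross-ratio structure, so that after the $\xi_2$- and $b_2$-integrations it collapses exactly into the Gamma-factor $\Gamma(l_{j^*}/2\pm iw_1(j^*)\pm iw_2(j^*))$ linking $k_{j^*}$ to the adjacent Fourier variables $w_1(j^*),w_2(j^*)$. Absolute convergence follows from Remark \ref{rmrkGammaFormula}, giving exponential decay of $|\Gamma(l_j/2\pm ik\pm iw)|$ combined with Gaussian damping from the factors $e^{-\tau_m\sigma^2 k_m^2/2}$; the assumption that for every $j$ either $s_j\neq 0$ or $t_j\neq T$ ensures $\tau_0>0$, producing the $k_0$-Gaussian damping needed to dominate $|\cos(ak_0)|$ for $a\in\D_{2\pi}$.
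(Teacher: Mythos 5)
Your base case and your single-arc moment computation (Lemma \ref{lmmObsExpMomentInsert} plus the $\arccosh$ expansion of Appendix \ref{sect_Fourier_Calc}, then coefficient extraction in $z$) are sound and mirror the paper's ingredients. The genuine gap is in your inductive step, which splits at an \emph{innermost} arc $j^*$. When some arc $j$ encloses $j^*$, the observable $\obs{\B_\xi}{s_j}{t_j}_0$ depends on the middle bridge $\xi_2$ through the random quantity $e^{\xi(s_{j^*})}\B_{\xi_2}(\tau_{j^*})$ sitting \emph{inside the sum} in its denominator, $\B_\xi(t_j)-\B_\xi(s_j)=\big(\text{part from }\xi_1\big)+e^{\xi(s_{j^*})}\B_{\xi_2}(\tau_{j^*})+\big(\text{part from }\xi_3\big)$, raised to the power $l_j$. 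Consequently the integrand over $\xi_2$ is \emph{not} $\obs{\B_{\xi_2}}{0}{\tau_{j^*}}_0^{\,l_{j^*}}$ times something independent of $\xi_2$, so the $\xi_2$-integration does not produce $M_{l_{j^*}}(\tau_{j^*},b_2)$, and the remaining integral is not a product of cross-ratio observables of the concatenated bridge $\xi_1\sqcup\xi_3$ to which the induction hypothesis applies (nor is the adjacent outer domain's Gaussian weight simply shifted by $\tau_{j^*}$ — in the target formula the inner and outer Fourier variables stay distinct and are coupled by the Gamma factor). You flag exactly this collapse as ``the hardest step,'' but you do not prove it; carrying it out would require the joint law of the endpoint $b_2$ and the exponential functional $\B_{\xi_2}(\tau_{j^*})$ and a further identity showing the resulting insertion integrates against the enclosing arcs to yield $\Gamma\big(\tfrac{l_{j^*}}{2}\pm iw_1(j^*)\pm iw_2(j^*)\big)$, none of which is supplied. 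The paper sidesteps this entirely by inducting on an \emph{outermost} arc: either it spans all of $[0,T]$, in which case the exponential-insertion trick of Lemma \ref{lmmObsExpMomentInsert} converts it into a shift $a\mapsto b=2\arccosh\big[\cosh(a/2)-z/4\big]$ of the bridge endpoint, the induction hypothesis is applied to the nested observables, and Proposition \ref{prpKeyArccoshExpansion} plus extraction of the $z^{l_N}$ coefficient yields the Gamma coupling; or its endpoints cut $[0,T]$ into three pieces with \emph{no} enclosing arcs, so Proposition \ref{prop:BBcomposition} factorizes the integrand exactly and the convolution identity \eqref{eqFourierConv} closes the induction.

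A secondary error: your convergence argument claims that ``for every $j$ either $s_j\neq 0$ or $t_j\neq T$'' forces $\tau_0>0$. It does not (take two arcs $[0,T/2]$ and $[T/2,T]$, where $\tau_0=0$); the correct reason, as in the paper, is that under this hypothesis the unbounded face is bounded by at least two half-circles or a positive-length segment, so either the Gaussian factor or the product of at least two Gamma factors (each decaying like $e^{-(\pi-\eps)|k_0|}$ by Remark \ref{rmrkGammaFormula}) dominates $\cosh(|a|k_0)$ for $a\in\D_{2\pi}$.
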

\begin{proof}
First, we show the absolute convergence.

From Remark \ref{rmrkGammaFormula} for every $\eps>0$ and any positive integer $l$ there exists $C(\eps,l)$ such that
\begin{equation}
\Gamma\left(\frac{l}{2}\pm ik \pm i\omega \right)\leq C(\eps, l) \exp\Big(-(\pi-\eps)\big(|k|+|w|\big)\Big),\qquad \forall k, \omega\in \R.
\end{equation}
Furthermore, the boundary of the face number $0$ (the unbounded face, corresponding to $k_0$) contains either at least $1$ half-circle or a line segment, and boundary of any other face contains either at least $3$ half-circles or a line segment. 
Therefore, the integral on the right-hand side of \eqref{eqPrpFinalResultLine} converges absolutely.

Moreover, if $\forall j:$ either $s_j\neq 0$ or $t_j\neq T$, then boundary of the face number $0$ (the unbounded face) contains either at least $2$ half-circles or a line segment.  
Thus, the integral on the right-hand side of \eqref{eqPrpFinalResultLine} converges absolutely for all $a\in \D_{2\pi}$.

\medskip

Secondly, we prove \eqref{eqPrpFinalResultLine} by induction on $N$.

\textbf{Base case:} $N = 0$.

In this case $\tau_0 = T$. Thus, the left-hand side of \eqref{eqPrpFinalResultLine} equals
\begin{equation}
\int 
\d \WS{\sigma^2}{a}{T}(\xi) 
= \frac{1}{\sqrt{2\pi T}\sigma} \exp\left(-\frac{a^2}{2T\sigma^2}\right)
\end{equation}
and the right-hand side is equal to
\begin{equation}
\int_{\R_+^1}\exp\left(-\frac{T \sigma^2}{2}\cdot k_0^2\right)
\frac{\cos(a\, k_0)}{\pi}  \d k_0 
= \frac{1}{\sqrt{2\pi T}\sigma}\exp\left(-\frac{a^2}{2 T\sigma^2}\right),
\end{equation}
which proves the base case.

\medskip
\textbf{Induction step:} $N-1\mapsto N$.
By renumbering observables we can assume that $\forall j<N$ we have $s_N\leq s_j$.

There are two possible cases. 

\medskip
\textit{Case 1:} $s_N = 0$ and $t_N = T$.
Renumber Fourier variables so that $w_1(N) =k_0$ and $w_2(N) =k_1$.
Using Lemma \ref{lmmObsExpMomentInsert}, and induction hypothesis we obtain for all $z<0$,
\begin{multline}\label{eqPrpFinalResultLineExponentInduction}
\int 
\exp\left\{
\frac{z}{\sigma^2}\, \obs{\B_{\xi}}{s_N}{t_N}_0
\right\}
\prod_{j=1}^{N-1} \obs{\B_{\xi}}{s_j}{t_j}^{l_j}_0
\d \WS{\sigma^2}{a}{T}(\xi)\\
=
\int_{
\R_+^{N}
} \left[
\prod_{j=1}^{N-1}\frac{\Gamma\Big(\frac{l_j}{2}\pm i w_1(j) \pm i w_2(j)\Big)}{2\pi^2\, \Gamma(l_j)}
\cdot \left(\frac{\sigma^2}{2}\right)^{l_j}
\right]
\exp\left(-\frac{\tau_1 \sigma^2}{2}\cdot k_1^2\right)
\frac{\cos\big(b\, k_1\big)}{\pi} \d k_1 
\\
\times
\prod_{m=2}^{N}\exp\left(-\frac{\tau_m \sigma^2}{2}\cdot k_m^2\right)  \sinh(2\pi k_m)\, 2 k_m \d k_m.
\end{multline}
where $b=2\arccosh\left[\cosh\left(\frac{a}{2}\right)-\frac{z}{4}\right]$.
Notice that it follows from Proposition \ref{prpObsExpMomentBound} that the left-hand side is analytic for $z\in \D_8$. 
Moreover, as we have already proved, the right-hand side is also absolutely convergent whenever $b \in \D_{2\pi}$.

Applying Proposition \ref{prpKeyArccoshExpansion} we obtain
\begin{equation}\label{eqCosExpansionInPrpLineExpObsFormula}
\cos\big(b\, k_1\big) 
= \cos(a\, k_1) + 2 k_1 \sinh(2\pi k_1 )\int_0^{\infty}\sum_{l=1}^{\infty} 
\frac{\Gamma\Big(\frac{l}{2}\pm i k_1 \pm i k_0 \Big)}{2\pi^2\, \Gamma(l)}
\cdot \frac{z^l}{2^l l!}\cos(a\, k_0)\d k_0.
\end{equation}
Now we substitute this expression into \eqref{eqPrpFinalResultLineExponentInduction}. 
We obtain an absolutely convergent expression, since it is dominated by the same expression with $a=0$  which, as we have already proved, is equal to the left-hand side of \eqref{eqPrpFinalResultLineExponentInduction} with $a=0$ which, in turn, is finite because of Proposition \ref{prpObsExpMomentBound}.

Substituting \eqref{eqCosExpansionInPrpLineExpObsFormula} into \eqref{eqPrpFinalResultLineExponentInduction} and taking the coefficient in front of $z^{l_0}$ we get the desired claim.

\medskip

\textit{Case 2:}  
 $t_N<T$.
Now we divide the interval $[0, T]$ into three smaller intervals $[0, s_N],\, [s_N , t_N]$ and $[t_N, T]$. 
Let $\xi_1 \in \Cfree[0, s_N]$, $\xi_2 \in \Cfree[0, t_N-s_N]$, and $\xi_3 \in \Cfree[0, T-t_N]$.
Notice that 
\begin{align}
\obs{\B(\xi_1\sqcup \xi_2 \sqcup \xi_3)}{s}{t}_0 &= \obs{\B(\xi_2)}{s-s_N}{t-s_N}_0  \qquad &\text{if } s,t\in [s_N, t_N];\\
\obs{\B(\xi_1\sqcup \xi_2 \sqcup \xi_3)}{s}{t}_0 &= \obs{\B(\xi_3)}{s-t_N}{t-t_N}_0  &\qquad \text{if } s,t\in [t_N, T].
\end{align}
Let $X = \{j: s_N\leq t_j\leq t_N\}$ and $Y = \{j: t_N \leq s_j \leq T\}$. Observe that $X\cup Y = \{1, \ldots N\}$, since our observables are non-interlaced.
Therefore, we get that
\begin{multline}
\prod_{j=1}^N  \obs{\B\big(\xi_1\sqcup\xi_2\sqcup\xi_3\big)}{s_j}{t_j}^{l_j}_0\\
= \prod_{j\in X}  \obs{\B(\xi_2)}{s_j-s_N}{t_j-s_N}^{l_j}_0\cdot \prod_{j\in Y}  \obs{\B(\xi_3)}{s_j-t_N}{t_j-t_N}^{l_j}_0.
\end{multline}
Thus, using \eqref{eqConvWS} we get
\begin{multline}
\int 
\prod_{j=1}^N 
\obs{\B_{\xi}}{s_j}{t_j}^{l_j}_0
\d \WS{\sigma^2}{a}{T}(\xi)
= \int_{\R^2}
\left[\int \d \WS{\sigma^2}{b}{s_N}(\xi_1)\right]\\
\times
\left[\int \prod_{j\in X}  \obs{\B(\xi_2)}{s_j-s_N}{t_j-s_N}^{l_j}_0\d \WS{\sigma^2}{c}{t_N-s_N}(\xi_2)\right]\\
\times 
\left[\int\prod_{j\in Y}  \obs{\B(\xi_3)}{s_j-t_N}{t_j-t_N}^{l_j}_0  \d \WS{\sigma^2}{a-b-c}{T-t_N}(\xi_3)\right]
\d b \d c
\end{multline}
Applying induction hypothesis to all three factors on the right-hand side (if $Y = \emptyset$, we apply Case 1 to the second factor) and \eqref{eqFourierConv}
finishes the proof of Case 2.

\end{proof}

A similar Proposition holds if we add an exponential of $ \obs{\B_{\xi}}{0}{T}_0$ in the expectation.
\begin{prp}\label{prpLineExpObsWithMomentGenFuncFormula}
Fix $T>0$, $a\in \R$, and $N\geq 0$.
Let $\Big\{\obs{\phi}{s_{j}}{t_j}_0\Big\}_{j=1}^{N}$ be a set of non-interlaced observables on $(0, T)$,
$\left\{l_j\right\}_{j=1}^N$ be positive integers,
and $\alpha\in[0, \pi)$. 
Then,
\begin{multline} \label{eqPrpFinalResultLineWithExponent}
\int 
\exp\left\{
\frac{8 \sin^2\frac{\alpha}{2}}{\sigma^2}\, \obs{\B_{\xi}}{0}{T}_0
\right\}
\prod_{j=1}^N \obs{\B_{\xi}}{s_j}{t_j}^{l_j}_0
\d \WS{\sigma^2}{0}{T}(\xi)\\
=\int_{\R_+^{N+1}} 
\prod_{j=1}^N \frac{\Gamma\Big(\frac{l_j}{2}\pm i w_1(j) \pm i w_2(j)\Big)}{2\pi^2\Gamma(l_j)} 
\cdot \left(\frac{\sigma^2}{2}\right)^{l_j}
\\
\times
\exp\left(-\frac{\tau_0\sigma^2}{2}\cdot k_0^2\right)  \frac{\cosh(2\alpha\, k_0)}{\pi} \d k_0 \cdot
\prod_{m=1}^{N}\exp\left(-\frac{\tau_m \sigma^2}{2}\cdot k_m^2\right)  \sinh(2\pi k_m)\, 2 k_m \d k_m.
\end{multline}
\end{prp}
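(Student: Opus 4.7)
The plan is to derive Proposition \ref{prpLineExpObsWithMomentGenFuncFormula} from Proposition \ref{prpLineExpObsFormula} by trading the exponential factor $\exp\{8\sin^2(\alpha/2)/\sigma^2 \cdot \obs{\B_{\xi}}{0}{T}_0\}$ for a shift in the Brownian bridge endpoint via Lemma \ref{lmmObsExpMomentInsert}, and then analytically continuing from real negative values of the trade parameter $z$ to the physical value $z = 8\sin^2(\alpha/2) \in (0, 8)$. The key observation is that this value of $z$ corresponds to a purely imaginary shift $b = 2\arccosh[\cos\alpha] = 2i\alpha$, so that the factor $\cos(b\, k_0)$ appearing in the Fourier integral of Proposition \ref{prpLineExpObsFormula} becomes $\cosh(2\alpha k_0)$, matching the target formula \eqref{eqPrpFinalResultLineWithExponent}.

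First, for real $z < 0$, Lemma \ref{lmmObsExpMomentInsert} applied with $a = 0$ yields
\begin{equation}
\int \exp\left\{\frac{z}{\sigma^2}\, \obs{\B_{\xi}}{0}{T}_0\right\} \prod_{j=1}^N \obs{\B_{\xi}}{s_j}{t_j}^{l_j}_0 \d\WS{\sigma^2}{0}{T}(\xi) = \int \prod_{j=1}^N \obs{\B_{\xi}}{s_j}{t_j}^{l_j}_0 \d\WS{\sigma^2}{b(z)}{T}(\xi),
\end{equation}
with $b(z) := 2\arccosh[1 - z/4] \in \R_+$. Since each $s_j, t_j$ lies in the open interval $(0, T)$, the strengthened convergence clause of Proposition \ref{prpLineExpObsFormula} applies, and the right-hand side equals the Fourier integral \eqref{eqPrpFinalResultLine} with $a$ replaced by $b(z)$, absolutely convergent for every $b(z) \in \D_{2\pi}$.

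Next, I extend this identity analytically in $z$ from real negative values all the way to $z = 8\sin^2(\alpha/2)$. Analyticity of the left-hand side on the open disk $\D_8$ follows from Proposition \ref{prpObsExpMomentBound} combined with H\"older's inequality: for any $|z| < 8$, pick $p > 1$ with $p|z| < 8$ and estimate the integrand by $\bigl(\int \exp\{p|z|/\sigma^2\, \obs{\B_{\xi}}{0}{T}_0\} \d\WS{\sigma^2}{0}{T}\bigr)^{1/p}$ times $\bigl(\int \prod_{j=1}^N \obs{\B_{\xi}}{s_j}{t_j}^{l_j q}_0 \d\WS{\sigma^2}{0}{T}\bigr)^{1/q}$ with $q = p/(p-1)$; the first factor is finite by Proposition \ref{prpObsExpMomentBound}, while the second is finite because each observable has all polynomial moments (e.g.\ by Proposition \ref{prpLineExpObsFormula} with $N = 1$, together with a further H\"older step to separate the product). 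On the right-hand side, the Gaussian factor $\exp(-\tau_0 \sigma^2 k_0^2/2)$ dominates $\cos(b\, k_0)$ uniformly in $b$ on compact subsets of $\D_{2\pi}$, giving analyticity in $b$ throughout that disk. Finally, $z \mapsto b(z)$ is analytic at the origin under the $\arccosh^2$ convention of Section \ref{sectNotation} item 4 and maps a neighbourhood of $(-\infty, 8)$ into $\D_{2\pi}$, so composition yields the required analytic continuation.

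Specialising $z = 8\sin^2(\alpha/2)$, we obtain $1 - z/4 = \cos\alpha$, hence $b(z) = 2i\alpha \in \D_{2\pi}$ for $\alpha \in [0, \pi)$. Substituting this value into the Fourier integral from Proposition \ref{prpLineExpObsFormula} and using $\cos(2i\alpha\, k_0) = \cosh(2\alpha k_0)$ produces exactly \eqref{eqPrpFinalResultLineWithExponent}. The main obstacle is the analytic continuation step, specifically controlling the joint integrability of the exponential and polynomial factors for complex $z$ throughout $\D_8$; this reduces cleanly to Proposition \ref{prpObsExpMomentBound} via the H\"older estimate sketched above, and so presents no serious difficulty.
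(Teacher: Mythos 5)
Your proposal is correct and follows the paper's own proof essentially verbatim: apply Lemma \ref{lmmObsExpMomentInsert} with Proposition \ref{prpLineExpObsFormula} for $z<0$, analytically continue in $z$ through $\D_8$ (left-hand side via the exponential moment bound of Proposition \ref{prpObsExpMomentBound}, right-hand side via absolute convergence of the Fourier integral for $b\in\D_{2\pi}$ and the $\arccosh^2$ convention of Statement \ref{stmArccoshDef}), then set $z=8\sin^2\frac{\alpha}{2}$, $b=2i\alpha$, so that $\cos(b\,k_0)=\cosh(2\alpha k_0)$; your H\"older estimate just fleshes out the analyticity assertion the paper leaves implicit. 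One cosmetic slip: $b(z)$ does not map a neighbourhood of all of $(-\infty,8)$ into $\D_{2\pi}$ (it is large real for very negative $z$), but the continuation only needs agreement on a short interval of negative $z$ together with analyticity on a neighbourhood of $[0,8)$, where $b(z)\in\D_{2\pi}$ indeed holds, so the argument stands.
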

\begin{proof}
Applying Lemma \ref{lmmObsExpMomentInsert} and Proposition \ref{prpLineExpObsFormula}
we obtain 
\begin{multline}  
\int 
\exp\left\{
\frac{z}{\sigma^2}\, \obs{\B_{\xi}}{0}{T}_0
\right\}
\prod_{j=1}^N \obs{\B_{\xi}}{s_j}{t_j}^{l_j}_0
\d \WS{\sigma^2}{0}{T}(\xi)\\
=
\int_{\R_+^{N+1}} 
\prod_{j=1}^N \frac{\Gamma\Big(\frac{l_j}{2}\pm i w_1(j) \pm i w_2(j)\Big) }{2\pi^2\, \Gamma(l_j)} \cdot \left(\frac{\sigma^{2}}{2}\right)^{l_j}\\
\times
\exp\left(-\frac{\tau_0\sigma^2}{2}\cdot k_0^2\right)  \frac{\cos(b\, k_0)}{\pi} \d k_0 \cdot
\prod_{m=1}^{N}\exp\left(-\frac{\tau_m \sigma^2}{2}\cdot k_m^2\right)  \sinh(2\pi k_m)\, 2 k_m \d k_m.
\end{multline}
with $b = 2\arccosh[1-\frac{z}{4}] $.
For $z\in \D_8$ the left-hand side is analytic, and the right-hand side is analytic for $b\in \D_{2\pi}$.
Thus, using Statement \ref{stmArccoshDef}, we can analytically continue the equality above for all $z\in [0, 8)$. 
Taking $z = 8\sin^2\frac{\alpha}{2}$ we get $b = \pm  2 i\alpha$, which 
gives the desired claim.
\end{proof}

\section{Proofs of the main results}\label{sect_proofs_main_theorems}
\subsection{Proof of Theorem \ref{thrMainCorrelations}}
\label{sect_Thr_Proof}

\begin{proof}[Proof of Theorem \ref{thrMainCorrelations}]
Using Proposition \ref{prpMainExpectObservRegular} and Proposition \ref{prpObsWeightChange} we get,
\begin{multline}
\int_{\Diff^1(\T)/\SL(2,\R)} 
\left[\prod_{j=1}^{N}\obs{\phi}{s_j}{t_j}^{l_j}\right]\d \FMeas{\sigma^2}\big(\phi\big)
=
\lim_{\alpha\to\pi-} \frac{4\pi(\pi-\alpha)}{\sigma^2} \\
\times
\int_{\T}\left[  \int_{\Diff^1(\T)} \obs{\phi}{0}{T}_0
\exp\left\{
\frac{8 \sin^2\frac{\alpha}{2}}{\sigma^2}\, \obs{\phi}{0}{T}_0
\right\}
\prod_{j=1}^{N}\obs{\phi}{\sh{s_j}{x}}{\sh{t_j}{x}}_{0}^{l_j} \d\measN{\sigma^2}{0}(\phi) \right] \d x.
\end{multline}
Using the fact that
\begin{equation}
 \d\measN{\sigma^2}{0}(\phi) = \d\WS{\sigma^2}{0}{T}(\xi), \qquad \text{where } \phi = \A(\xi),
\end{equation}
and
\begin{equation}
\obs{\A(\xi)}{s}{t}_{0} 
= \obs{\B(\xi)}{s}{t}_{0} 
\end{equation}
we obtain
\begin{multline}
 \int_{\Diff^1(\T)} \obs{\phi}{0}{T}_0
\exp\left\{
\frac{8 \sin^2\frac{\alpha}{2}}{\sigma^2}\, \obs{\phi}{0}{T}_0
\right\}
\prod_{j=1}^{N}\obs{\phi}{\sh{s_j}{x}}{\sh{t_j}{x}}_{0}^{l_j} \d\measN{\sigma^2}{0}(\phi) \\
=
 \int \obs{\B_{\xi}}{0}{T}_0
\exp\left\{
\frac{8 \sin^2\frac{\alpha}{2}}{\sigma^2}\, \obs{\B_{\xi}}{0}{T}_0
\right\}
\prod_{j=1}^{N}\obs{\B_{\xi}}{\sh{s_j}{x}}{\sh{t_j}{x}}_{0}^{l_j}  \d\WS{\sigma^2}{0}{T}(\xi).
\end{multline}
Differentiating \eqref{eqPrpFinalResultLineWithExponent} in $\alpha$ gives the desired claim.
\end{proof}

\subsection{Proof of Proposition~\ref{prpExpMoment}}
\begin{proof}[Proof of Proposition~\ref{prpExpMoment}]
Using Corollary~\ref{crlMainObsMoments}, for 
\begin{equation}
C = C(\sigma,s, t) = \sup_{k>0}\left\{\exp\left(-\frac{\big(1-(t-s)\big)\sigma^2}{2}\cdot k^2 \right)\sinh(2\pi k)\, 2k\right\}
\end{equation} 
and any positive integer $l$ we have
\begin{equation}
\int  \obs{\phi}{s}{t}^{l}\d\FMeas{\sigma^2}(\CnjCl{\phi}) 
\leq 
C 
\int_{\R_+^{2}} 
\frac{\Gamma\big(\frac{l}{2} \pm i k_1 \pm i k_2\big)}{2\pi^2\, \Gamma(l)}\cdot\left(\frac{\sigma^2}{2}\right)^l
\exp\left(-\frac{(t-s)\sigma^2}{2}\cdot k_1^2\right)  
\sinh(2\pi k_1)\, 2 k_1 \d k_1 \d k_2.
\end{equation}
Using Tonelli's Theorem and Proposition~\ref{prpKeyArccoshExpansion} for $\beta = 0$ we get that for any $z\in[0,8)$, 
\begin{equation}
\int  \exp\left\{ \frac{z}{\sigma^2}\, \obs{\phi}{s}{t} \right\} \d\FMeas{\sigma^2}(\CnjCl{\phi})
\leq 
C\int_{\R_+} \exp\left(-\frac{(t-s)\sigma^2}{2}\cdot k^2\right) \cos\left(2k \arccosh\left[1-\frac{z}{4}\right]\right)\d k.
\end{equation}
Take $z = 8\sin^2 \frac{\alpha}{2}$, so that $ \cos\left(2k \arccosh\left[1-\frac{z}{4}\right]\right) = \cosh\left(2k \alpha\right)$. 
Therefore,
\begin{multline}
\int  \exp\left\{ \frac{ 8\sin^2\frac{\alpha}{2}}{\sigma^2}\, \obs{\phi}{s}{t} \right\} \d\FMeas{\sigma^2}(\CnjCl{\phi})
\leq 
C\int_{\R_+} \exp\left(-\frac{(t-s)\sigma^2}{2}\cdot k^2\right) \cosh\left(2k \alpha\right) \d k\\
= C\frac{\sqrt{\pi}}{\sqrt{2(t-s)}\sigma}\exp\left(\frac{2\alpha^2}{(t-s)\sigma^2} \right) .
\end{multline}
Taking $\alpha\to \pi$ finishes the proof.
\end{proof}

The fact that the constant $8/\sigma^2$ in the exponent is optimal (as stated in Remark~\ref{rmrkExpOpt}), essentially, follows from the fact that analyticity of the right-hand side of \eqref{eqObsZeroExpMoment} breaks down at $z = 4 (1+\cosh(a/2)) \approx 8$ for $a\approx 0$.
Below we sketch the proof.

\medskip

Fix $\lambda>8$.
Without loss of generality we can assume that $s=0$.
Let $N=N(\lambda)>1$ be a large number to be chosen later.
Take
$
h(k) 
= 
k^2\exp\left(-N^2\sigma^2 k^2/2 \right) .
$
Then,
\begin{equation}
0 \leq h(k) \leq 
\exp\left(-\frac{\big(1-t\big)\sigma^2}{2}\cdot k^2 \right)
\sinh(2\pi k)\, k, \qquad \forall k\in \R.
\end{equation}

Using Corollary~\ref{crlMainObsMoments} and Proposition~\ref{prpLineExpObsFormula} we can deduce that
\begin{equation}\label{eqRmrkOptExpProof}
\int  \exp\left\{ \frac{\lambda}{\sigma^2}\, \obs{\phi}{0}{t} \right\} \d\FMeas{\sigma^2}(\CnjCl{\phi})
\geq
\int_{\R} 
\int \exp\left\{\frac{\lambda}{\sigma^2}\, \obs{\B_{\xi}}{0}{t}_0 \right\} 
\d \WS{\sigma^2}{u}{t}(\xi) \,
\widehat{h}(u)\d u,
\end{equation}
whenever negative part of the integral in right-hand side is finite. 
Here, $\widehat{h}$ is the Fourier transform of $h$, see \eqref{eqFourierDef} for the normalisation.

Notice that $\widehat{h}(u)>0$ for $u\in (-N, N)$.
Using \eqref{eqObsZeroExpMoment} one can show that if $N$ is large enough, then
\begin{equation}
\int \exp\left\{\frac{\lambda}{\sigma^2}\, \obs{\B_{\xi}}{0}{t}_0 \right\} 
\d \WS{\sigma^2}{u}{t}(\xi) 
\end{equation}
is equal to $+\infty$ for $u$ sufficiently small (which follows from the fact that analyticity of the right-hand side of \eqref{eqObsZeroExpMoment} breaks down at $z = 4 (1+\cosh(a/2))$, which is smaller than $\lambda$ for small $a$), and bounded for $|u|>N$.
This means that the right-hand side of \eqref{eqRmrkOptExpProof} is equal to $+\infty$, finishing the proof.

\subsection{Proof of Theorem \ref{thrMainUniq}}
\begin{proof}[Proof of Theorem \ref{thrMainUniq}]
Denote $\DiffG^1(\T)= \left\{\phi\in \Diff^1(\T)\,\big| \, \phi(0)=0, \phi'(0)=1, \phi(1/2) = 1/2\right\}$. 
It is easy to see that as a topological space $\DiffG^1(\T)$ is isomorphic to  $\Diff^1(\T)/\SL(2, \R)$. 
Let $\TechMeas'$ and $\FMeas{\sigma^2}' $ be the push-forward measures of $\TechMeas$ and $\FMeas{\sigma^2}$ to $\DiffG^1(\T)$ respectively.
Then,
\begin{equation}
\int \prod_{j=1}^N \obs{\phi}{s_j}{t_j}^{l_j} \d\FMeas{\sigma^2}'(\phi)
= \int \prod_{j=1}^N \obs{\phi}{s_j}{t_j}^{l_j} \d\TechMeas'(\phi).
\end{equation}

Now we show that for all Borel $A\subset \DiffG^1(\T)$ we have 
$\FMeas{\sigma^2}'(A) =  \TechMeas'(A)$.
It is sufficient to prove this for all cylinder sets.

Take any set of points $0=t_0<t_1<\ldots <t_{M-1}< t_{M}= 1/2$ and $1/2<t_{M+1}<\ldots <t_{2M-1}< t_{2M} = 1$. 
Consider observables $\Big\{\obs{\phi}{0}{t_j}\Big\}_{j=1}^{2M-1}$ and $\Big\{\obs{\phi}{t_{j}}{t_{j+1}}\Big\}_{j=0}^{2M-1}$. 
This set of observables is non-interlaced.

Observe that for all $\phi\in \DiffG^1(\T)$ and $1<j<2M-1$, using $\sin(a-b) = \sin(a)\cos(b)-\sin(b)\cos(a)$, we get
\begin{equation}
\frac{\obs{\phi}{0}{t_j} \obs{\phi}{0}{t_{j+1}}}{\pi \,\obs{\phi}{t_j}{t_{j+1}}} 
= \frac{\phi'(0) \sin\Big(\pi\big[\phi(t_{j+1})-\phi(t_j)\big]\Big)}
{\sin\big(\pi\phi(t_{j+1})\big) \sin\big(\pi\phi(t_{j})\big)}
= \cot \big(\pi\phi(t_{j})\big) -\cot \big(\pi\phi(t_{j+1})\big).
\end{equation}
Moreover, for any $\phi\in \DiffG^1(\T)$ we have $\cot \big(\pi\phi(t_{M})\big) = 0$.
Thus, for all $0\leq j\leq 2M-1$ it is possible to express $\phi(t_j)$ in terms of observables in question using algebraic operations and trigonometric functions. 
Since for all $\phi\in \DiffG^1(\T)$ and $0< j\leq 2M-1$ ,
\begin{equation}
\obs{\phi}{0}{t_j} = \frac{\pi\sqrt{\phi'(t_j)}}{\sin\big(\pi\phi(t_j)\big)},
\end{equation}
we can express $\phi'(t_j)$ in terms of our observables too.

Moreover, notice that taking $N=0$ implies that the total mass of both measures is equal to $\mathcal{Z}(\sigma)$.

\medskip

Now take two random variables $\phi_1 \sim \frac{1}{\mathcal{Z}(\sigma)}\TechMeas'$ and $\phi_2 \sim \frac{1}{\mathcal{Z}(\sigma)}\FMeas{\sigma^2}'$. 
Notice that joint distribution of observables $\Big\{\obs{\phi}{0}{t_j}\Big\}_{j=1}^{2M-1}\cup\Big\{\obs{\phi}{t_{j}}{t_{j+1}}\Big\}_{j=0}^{2M-1}$ is the same for both $\phi_1$ and $\phi_2$, since their correlation functions are equal, and all variables have exponential moments (hence, their characteristic functions are equal).
Moreover, we have already shown that we can express both $\{\phi(t_j)\}_{j=0}^{2M-1}$ and $\{\phi'(t_j)\}_{j=0}^{2M-1}$ as continuous functions of observables in question. 
Therefore, the induced distributions on $\{\phi(t_j),\phi'(t_j) \}_{j=0}^{2M-1}$ are equal. 
This finishes the proof.
\end{proof}

\subsection{Proof of Theorem \ref{thrStressEnergy}}
Theorem \ref{thrStressEnergy} follows immediately from the following Lemma.
 
\begin{lmm}
For $\eps\to 0$ we have 
\begin{multline}
\exp\left(\frac{\eps\,\sigma^2}{2} \cdot k_2^2 \right)  \int_{\R_+} \exp\left(-\frac{\eps\, \sigma^2}{2} \cdot k_1^2 \right) \sinh(2\pi k_1) \, \frac{\Gamma\left(1\pm i k_1 \pm i k_2 \right)}{2\pi^2 \Gamma(2)} \, 2 k_1\d k_1 \\
=
\eps^{-2} + \frac{\sigma^4}{240} + \frac{\sigma^4 k_2^2}{12} + (1+k_2^5)\, O(\eps^{1/2}).
\end{multline}
\end{lmm}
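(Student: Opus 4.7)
The idea is to isolate the non-integrable tail of the integrand (which yields the divergent term) and to reduce the remainder to a closed-form moment of an exponentially decaying function. Using Remark~\ref{rmrkGammaFormula} with $n=0$ (so $|\Gamma(1+ix)|^{2}=\pi x/\sinh(\pi x)$) together with $\Gamma(2)=1$, one rewrites
\begin{equation*}
\frac{\Gamma(1\pm ik_{1}\pm ik_{2})}{2\pi^{2}\Gamma(2)}=\frac{k_{1}^{2}-k_{2}^{2}}{\cosh(2\pi k_{1})-\cosh(2\pi k_{2})}.
\end{equation*}
Together with the hyperbolic identity $\sinh(2\pi k_{1})/[\cosh(2\pi k_{1})-\cosh(2\pi k_{2})]=\tfrac{1}{2}[\coth(\pi(k_{1}-k_{2}))+\coth(\pi(k_{1}+k_{2}))]$, the integrand becomes
\begin{equation*}
e^{-\eps\sigma^{2}k_{1}^{2}/2}\,k_{1}\bigl[(k_{1}+k_{2})\,g(k_{1}-k_{2})+(k_{1}-k_{2})\,g(k_{1}+k_{2})\bigr],
\end{equation*}
where $g(x):=x\coth(\pi x)=|x|+\tilde g(x)$ and $\tilde g(x):=2|x|/(e^{2\pi|x|}-1)$ is even, smooth, and exponentially decaying.

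Splitting along this decomposition collapses the ``$|x|$'' contribution into $2k_{1}(k_{1}^{2}-k_{2}^{2})\,\mathbf{1}_{k_{1}>k_{2}}$ (for $k_{1}<k_{2}$ the two bracketed terms cancel exactly), and leaves a remainder $R(k_{1},k_{2})=k_{1}[(k_{1}+k_{2})\tilde g(k_{1}-k_{2})+(k_{1}-k_{2})\tilde g(k_{1}+k_{2})]$ that is uniformly exponentially decaying in $k_{1}$ on $k_{2}$-compact sets. The singular tail is integrated exactly via the substitution $u=k_{1}^{2}-k_{2}^{2}$:
\begin{equation*}
\int_{k_{2}}^{\infty}2k_{1}(k_{1}^{2}-k_{2}^{2})\,e^{-\eps\sigma^{2}k_{1}^{2}/2}\,dk_{1}=\frac{4}{\eps^{2}\sigma^{4}}\,e^{-\eps\sigma^{2}k_{2}^{2}/2}.
\end{equation*}

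For the remainder, substituting $u=k_{1}-k_{2}$ in one sub-integral and $u=-(k_{1}+k_{2})$ in the other and using the evenness of $\tilde g$ glues the two half-line integrals into a single integral over $\R$:
\begin{equation*}
\int_{0}^{\infty}R(k_{1},k_{2})\,dk_{1}=\int_{\R}(u^{2}+3k_{2}u+2k_{2}^{2})\,\tilde g(u)\,du=\int_{\R}u^{2}\tilde g(u)\,du+2k_{2}^{2}\int_{\R}\tilde g(u)\,du,
\end{equation*}
the linear term vanishing by evenness. The two surviving moments are standard: via $\int_{0}^{\infty}u^{s-1}/(e^{2\pi u}-1)\,du=\Gamma(s)\zeta(s)/(2\pi)^{s}$ one obtains $\int\tilde g=1/6$ (from $\zeta(2)$) and $\int u^{2}\tilde g=1/60$ (from $\zeta(4)$).

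To conclude, I insert the Gaussian cutoff back into the $R$-integral, using that $\int R\,(1-e^{-\eps\sigma^{2}k_{1}^{2}/2})\,dk_{1}$ is bounded by $\eps$ times a polynomial in $k_{2}$ (since $Rk_{1}^{2}$ remains exponentially decaying). Assembling the pieces and Taylor-expanding both $e^{-\eps\sigma^{2}k_{2}^{2}/2}/(\eps^{2}\sigma^{4})$ and the overall prefactor $e^{\eps\sigma^{2}k_{2}^{2}/2}$, the $\eps^{-1}$ contributions cancel between the two expansions and the claimed asymptotic expansion is read off, with a remainder of the stated order. \textbf{The main obstacle} is spotting the algebraic decomposition used in the first two steps: once the hyperbolic ratio is recognized as $\tfrac12[\coth+\coth]$ and $g(x)$ is split as $|x|+\tilde g(x)$, the symmetrization trick that glues $I_{1}+I_{2}$ to a single integral over $\R$ and the subsequent zeta-moment computation are routine.
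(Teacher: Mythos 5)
Your argument is correct and, after the common first step, takes a genuinely different route from the paper. Both proofs start by rewriting $\Gamma(1\pm ik_1\pm ik_2)$ via $|\Gamma(1+ix)|^{2}=\pi x/\sinh(\pi x)$ and the identity $\sinh(2\pi k_1)/[\cosh(2\pi k_1)-\cosh(2\pi k_2)]=\tfrac12[\coth(\pi(k_1-k_2))+\coth(\pi(k_1+k_2))]$. From there the paper extends the integral to all of $\R$, evaluates $\int_{\R}e^{-\eps\sigma^2k_1^2/2}k_1^{2n+1}\coth(\pi(k_1-k_2))\,\d k_1$ through the distributional Fourier transform of $\coth$, Plancherel, Hermite polynomials, and Taylor expansion of $\coth(\sqrt{\eps}\sigma w/2)$, and then cancels the resulting $\eps^{-1}$ terms against the expansion of the prefactor $e^{\eps\sigma^2k_2^2/2}$. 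You instead split $x\coth(\pi x)=|x|+\tilde g(x)$: the $|x|$ part integrates in closed form to $\frac{4}{\eps^{2}\sigma^{4}}e^{-\eps\sigma^2k_2^2/2}$, so the prefactor cancels it exactly and no $\eps^{-1}$ terms ever arise, while the $\tilde g$ part glues into $\int_{\R}(u+k_2)(u+2k_2)\tilde g(u)\,\d u=\tfrac1{60}+\tfrac{k_2^2}{3}$ via $\zeta(2),\zeta(4)$, with error $O\big(\eps(1+k_2^4)\big)$ from $1-e^{-\eps\sigma^2k_1^2/2}\le \eps\sigma^2k_1^2/2$. This is more elementary (no principal values or tempered distributions) and gives cleaner error control; your intermediate values agree with the paper's ($\tfrac1{60}$, $\tfrac{k_2^2}{3}$ after recombination).

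Two caveats. First, your final ``read off'' hides a normalisation mismatch: your (correct) pieces sum to $\frac{4}{\eps^{2}\sigma^{4}}+\frac1{60}+\frac{k_2^2}{3}+O\big(\eps(1+k_2^4)\big)$, which equals the displayed right-hand side only after multiplication by $\left(\frac{\sigma^2}{2}\right)^{2}$; that factor appears in the final display of the paper's own proof and in Theorem \ref{thrStressEnergy}, so the statement as printed omits it, and you should flag the discrepancy rather than assert exact agreement. Second, minor points: $\tilde g(x)=2|x|/(e^{2\pi|x|}-1)$ has an $|x|$ kink at $0$, so it is not smooth (harmless, since you only use evenness, decay and integrability), and your expansion of the prefactor $e^{\eps\sigma^2k_2^2/2}$ controls the error only where $\eps k_2^2$ stays bounded, so uniformity over all $k_2\in\R$ is not literally obtained --- but the paper's proof has the same limitation, so this is not a gap relative to it.
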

\begin{proof}
Using the fact that $|\Gamma(1+ix)|^2 = \pi x/\sinh(\pi x)$, and  
\begin{equation}
\frac{\sinh(2\pi k_1)}{\sinh\big(\pi (k_1-k_2)\big)\sinh\big(\pi (k_1+k_2)\big)}  
=
\coth\big(\pi(k_1-k_2)\big) + \coth\big(\pi(k_1+k_2)\big),
\end{equation}
we get
\begin{multline}
\int_{\R_+}\exp\left(-\frac{\eps\, \sigma^2}{2} \cdot k_1^2 \right) \sinh(2\pi k_1) \, \frac{\Gamma\left(1\pm i k_1 \pm i k_2 \right)}{2\pi^2 \Gamma(2)} \, 2 k_1\d k_1 \\
=
\int_{\R}\exp\left(-\frac{\eps\, \sigma^2}{2} \cdot k_1^2 \right)k_1 (k_1^2-k_2^2) \coth\big(\pi(k_1-k_2)\big)  \d k_1.
\end{multline}
It is well-known (e.g. see \citep[17.23.21]{GradshteynRyzhik})
\begin{equation}
\mathcal{F}[\coth(\pi x) ](w) = - i\coth\left(\frac{w}{2}\right),
\end{equation}
where by $\mathcal{F}$ we denote the Fourier Transform (in the sense of tempered distribution, e.g. see \citep[Chapter I.3]{SteinWeiss}) normalized as in \eqref{eqFourierDef}.
Therefore, by Plancherel identity 
\begin{multline}
\int_{\R}\exp\left(-\frac{\eps\, \sigma^2}{2} \cdot k_1^2 \right)k_1^{2n+1} \coth\big(\pi(k_1-k_2)\big) \d k_1\\
=
(-1)^n \frac{1}{\sqrt{2\pi}} \left(\eps \, \sigma^2\right)^{-(n+1)}
\int_{\R} \exp\left(-\frac{w^2}{2\eps\, \sigma^2} \right) \He_{2n+1}\left(\frac{w}{\sqrt{\eps}\sigma}\right) \coth\left(\frac{w}{2}\right) e^{i k_2 w}\d w,\\
=
(-1)^n \frac{1}{\sqrt{2\pi}} \left(\eps \, \sigma^2\right)^{-\left(n+\tfrac{1}{2}\right)}
\int_{\R} \exp\left(-\frac{w^2}{2} \right) \He_{2n+1}\left(w\right) \coth\left(\frac{\sqrt{\eps}\sigma \, w}{2}\right) e^{i k_2 \sqrt{\eps}\sigma \, w }\d w,
\end{multline}
where $\He_{k}(x) = (-1)^k e^{x^2/2}\frac{\d^k}{\d x^k } e^{-x^2/2}$ is the $k$-th Hermite polynomial, and the integrals are understood as Cauchy principal value integrals.
It is easy to see that
\begin{align}
 \coth\left(\frac{x}{2}\right) &= \frac{2}{x} + \frac{x}{6}-\frac{x^3}{360}+O(|x|^5),\\
  e^{i k_2 x} &= 1 + ik_2 x - \frac{k_2^2 x^2}{2} - \frac{i k_2^3 x^3}{6} +\frac{k_2^4 x^4}{24} + O(k_2^5 x^5),
\end{align}
where the remainder terms are uniform over $k_2\in\R$.

Therefore,
\begin{multline}
\int_{\R}\exp\left(-\frac{\eps\, \sigma^2}{2} \cdot k_1^2 \right)k_1 \coth\big(\pi(k_1-k_2)\big) \d k_1\\
 = 
\frac{1}{\sqrt{2\pi}} \left(\eps \, \sigma^2\right)^{-1}
\int_{\R} \exp\left(-\frac{w^2}{2} \right)
 \left( 2 + i\, 2\sqrt{\eps}\sigma\, k_2 w + \eps \, \sigma^2\left( \frac{1}{6}-k_2^2\right)w^2 + (1+k_2^3) O(\eps^{3/2} w^3)\right) \d w\\ 
=
2 \left(\eps \, \sigma^2\right)^{-1} + \frac{1}{6}-  k_2^2 + (1+k_2^3)\, O\left(\sqrt{\eps}\right),
\end{multline}
and
\begin{multline}
\int_{\R}\exp\left(-\frac{\eps\, \sigma^2}{2} \cdot k_1^2 \right)k_1^3 \coth\big(\pi(k_1-k_2)\big) \d k_1\\
 = 
 \frac{1}{\sqrt{2\pi}} \left(\eps \, \sigma^2\right)^{-2}
\int_{\R} \exp\left(-\frac{w^2}{2} \right)
 \Bigg( 6-2w^2 + i\ \sqrt{\eps}\sigma (6w-2w^3)\, k_2 + \eps \, \sigma^2 (3w^2-w^4) \left( \frac{1}{6}- k_2^2\right)\\ 
 -i (\eps\,\sigma^2)^{3/2}(3w^3-w^5)\frac{k_2^3}{6}  - (\eps\,\sigma^2)^2(3w^4-w^6)\left(\frac{1}{360}+\frac{k_2^2}{12}-\frac{k_2^4}{12}\right)+ (1+k_2^5)\, O(\eps^{5/2}w^5)\Bigg) \d w\\ 
=
4 \left(\eps \, \sigma^2\right)^{-2}+ \frac{1}{60} + \frac{k_2^2}{2} - \frac{k_2^4}{2}+(1+k_2^5)\, O(\eps^{1/2}).
\end{multline}

Hence,
\begin{multline}
\exp\left(\frac{\eps\,\sigma^2}{2} \cdot k_2^2 \right) \int_{\R_+} \exp\left(-\frac{\eps\, \sigma^2}{2} \cdot k_1^2 \right) \sinh(2\pi k_1) \, \frac{\Gamma\left(1\pm i k_1 \pm i k_2 \right)}{2\pi^2 \Gamma(2)}\left(\frac{\sigma^2}{2}\right)^2 \, 2 k_1\d k_1 \\
=
\eps^{-2}+\frac{\sigma^4}{240}+\frac{\sigma^4 k_2^2}{12} + (1+k_2^5)\, O(\eps^{1/2}),
\end{multline}
where the remainder terms are uniform over $k_2\in\R$.
\end{proof}

\newpage
\appendix

\section{Fourier calculations}
\label{sect_Fourier_Calc}

We use the following normalization for Fourier transform
\begin{align}\label{eqFourierDef}
\widehat{f}(\omega) &= \int_{\R} f(x) e^{-i\omega x}\d x,\\
f(x) &=  \int_{\R} \widehat{f}(\omega)\frac{e^{i\omega x}}{2\pi} \d \omega.
\end{align}
In our normalization
\begin{equation}\label{eqFourierConv}
\int f(y)g(x-y) \d y = \int \widehat{f}(\omega)\widehat{g}(\omega) \frac{e^{i \omega x}}{2\pi} \d \omega.
\end{equation}

The main goal of this Section is to prove the following proposition.
\begin{prp}\label{prpKeyArccoshExpansion}
For any $\beta, k \in \R$ and any $z\in \D_2$ we have
\begin{multline}
\cos\Big(2k\cdot \arccosh\left[\cosh(\beta/2)-z\right]\Big)
=\\
\cos(k\beta)+
2k\sinh(2\pi k)
\int_0^{\infty}
\sum_{l=1}^{\infty} \frac{\Gamma\Big(\frac{l}{2}\pm i k \pm i w\Big)}{2\pi^2 \Gamma(l)}\cdot \frac{(2 z)^l}{l!} 
\cos(w\beta)\d w,
\end{multline}
where the right-hand side converges absolutely.
\end{prp}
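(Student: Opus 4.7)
The plan is to match both sides as analytic functions of $z\in\D_2$ by comparing their Taylor coefficients in $z$ at $z=0$, the comparison being carried out in Fourier space with respect to $\beta$.

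I would first establish absolute convergence of the right-hand side for $z\in\D_2$. Using $|\Gamma(a+iy)|^2 \sim 2\pi|y|^{2a-1}e^{-\pi|y|}$, the factor $\Gamma(\tfrac{l}{2}\pm ik \pm iw)$ is bounded by $e^{-\pi(|k+w|+|k-w|)}$ up to polynomial corrections, so each $w$-integral converges. For the sum over $l$, Legendre's duplication $\Gamma(l) = 2^{l-1}\Gamma(l/2)\Gamma(l/2+\tfrac12)/\sqrt\pi$ combined with Stirling bounds the $l$-th term by $C(|z|/2)^l$ times a summable factor, giving absolute convergence on $\D_2$. The left-hand side is analytic in $z\in\D_2$: per Statement \ref{stmArccoshDef}, $\cos(2k\arccosh[\cdot])=\widetilde f(\arccosh^2[\cdot])$ with $\widetilde f(\omega)=\cos(2k\sqrt\omega)$ entire, and the only branch singularity of $\arccosh^2[\cosh(\beta/2)-z]$ occurs at $z=\cosh(\beta/2)+1\geq 2$.

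Since both sides agree at $z=0$ (both equal $\cos(k\beta)$), it suffices to match the $l$-th Taylor coefficient
\begin{equation*}
a_l(\beta) := \frac{1}{l!}\,\frac{d^l}{dz^l}\bigg|_{z=0}\cos(2k\arccosh[\cosh(\beta/2)-z])
\end{equation*}
for every $l\geq 1$. Iterating the chain rule with $y'(z)=-1/\sinh y$ and $y(0)=\beta/2$ expresses $a_l$ as a finite combination of $\sin(2ky)$ and $\cos(2ky)$ divided by odd powers of $\sinh y$, evaluated at $y=\beta/2$; in particular $a_l(\beta)$ decays exponentially as $\beta\to\infty$. By Fourier cosine inversion, the identity reduces to
\begin{equation*}
\int_0^\infty a_l(\beta)\cos(w\beta)\, d\beta = \frac{2^{l-1}\,k\,\sinh(2\pi k)}{\pi\, l!}\cdot \frac{\Gamma(\tfrac{l}{2}\pm ik\pm iw)}{\Gamma(l)},\qquad w>0.
\end{equation*}
For the base case $l=1$, $a_1(\beta)=2k\sin(k\beta)/\sinh(\beta/2)$, and the classical integral $\int_0^\infty \sin(a\beta)/\sinh(\beta/2)\,d\beta=\pi\tanh(\pi a)$ together with $\tanh A+\tanh B=\sinh(A+B)/(\cosh A\cosh B)$ give $\pi k\sinh(2\pi k)/[\cosh\pi(k+w)\cosh\pi(k-w)]$, matching the claim via $\Gamma(\tfrac12\pm ik\pm iw)=\pi^2/[\cosh\pi(k+w)\cosh\pi(k-w)]$.

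The main obstacle is extending the match to all $l\geq 1$. The cleanest route avoids case-by-case computation: apply Cauchy's integral formula to write $a_l(\beta)=(2\pi i)^{-1}\oint_{|z|=\rho}z^{-l-1}\cos(2k\arccosh[\cosh(\beta/2)-z])\,dz$ for sufficiently small $\rho$, interchange with the $\beta$-cosine transform (justified by Fubini on a small $z$-contour where absolute convergence holds, together with the exponential $\beta$-decay of $a_l$), and reduce the task to a single generating-function identity in $z$. That identity can be verified using classical integrals of the form $\int_0^\infty \sin(a\beta)\cos(b\beta)\operatorname{csch}^m(\beta/2)\,d\beta$ followed by analytic continuation in $z$. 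Extracting the $z^l$-coefficient then yields the desired formula for all $l\geq 1$, completing the proof.
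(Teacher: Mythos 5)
Your overall strategy---prove absolute convergence, note analyticity of both sides in $z\in\D_2$, and match Taylor coefficients $a_l(\beta)$ by computing their cosine transforms in $\beta$---is a legitimate dual of the paper's argument (the paper instead Fourier-analyses in the variable conjugate to $2k$, via Lemma \ref{lmmSinhRatioFourier}, and expands in $z$ under that integral). Your convergence discussion, the analyticity of the left-hand side via Statement \ref{stmArccoshDef}, and the base case $l=1$ (where $\Gamma(\tfrac12\pm ik\pm iw)=\pi^2/[\cosh\pi(k+w)\cosh\pi(k-w)]$ reduces everything to $\int_0^\infty\sin(a\beta)\operatorname{csch}(\beta/2)\,\d\beta=\pi\tanh(\pi a)$) are all correct. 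But the proof stops exactly where the real work begins: the claim that, for every $l\geq 1$, the cosine transform of $a_l(\beta)$ equals $c_l\,\Gamma\big(\tfrac l2\pm ik\pm iw\big)$ is essentially the proposition itself, and your proposed route---Cauchy's formula plus ``a single generating-function identity in $z$'' verified by ``classical integrals of the form $\int_0^\infty\sin(a\beta)\cos(b\beta)\operatorname{csch}^m(\beta/2)\,\d\beta$ followed by analytic continuation''---is a restatement of the goal, not an argument. No mechanism is supplied that produces the product of \emph{four} Gamma factors $\Gamma(\tfrac l2+ik+iw)\Gamma(\tfrac l2+ik-iw)\Gamma(\tfrac l2-ik+iw)\Gamma(\tfrac l2-ik-iw)$ for general $l$; only the $l=1$ case collapses to a reflection-type formula. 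This is precisely the role played in the paper by Lemma \ref{lmmGammaFourier}, which derives the two-dimensional Fourier representation
\begin{equation}
\frac{\Gamma(l\pm ik\pm iw)}{\Gamma^2(2l)}=\frac12\int_{\R^2}\big(e^{a/2}+e^{-a/2}+e^{b/2}+e^{-b/2}\big)^{-2l}e^{ika+iwb}\,\d a\,\d b
\end{equation}
from Euler's integral for $\Gamma$ and two changes of variables; without an ingredient of this kind your induction/generating-function step has no engine.

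There is also a concrete technical obstruction in the reduction you do sketch: writing $a_l(\beta)$ as a combination of $\sin(2ky)$, $\cos(2ky)$ over odd powers of $\sinh(\beta/2)$ and integrating term by term fails, because for $m\geq 2$ the integrals $\int_0^\infty\sin(a\beta)\cos(b\beta)\operatorname{csch}^m(\beta/2)\,\d\beta$ diverge at $\beta=0$; the finiteness of $a_l(0)$ comes from cancellations between these terms, so the ``classical integrals'' you invoke do not individually exist and some regularisation or a global identity would be needed. Compare with how the paper avoids this entirely: after Lemma \ref{lmmSinhRatioFourier} the $z$-expansion produces the manifestly integrable kernels $\sinh(x)\big(\cosh(x)+\cosh(\beta/2)\big)^{-l-1}$, a single integration by parts feeds them into Lemma \ref{lmmGammaFourier}, and the same computation at $\beta=0$ delivers the absolute-convergence bound \eqref{eqStmCosArcAbsConv} justifying the interchange of sum and integral. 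To complete your route you would need (i) a proof of the transform identity for general $l$ (e.g.\ by importing something equivalent to Lemma \ref{lmmGammaFourier}), and (ii) a justified interchange of the $z$-contour integral with the $\beta$-integral that does not pass through divergent csch-power integrals.
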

\begin{rmrk}
The right-hand side above can also be written as 
\begin{equation}
2k\sinh(2\pi k)
\int_0^{\infty}
\sum_{l=0}^{\infty} \frac{\Gamma\Big(\frac{l}{2}\pm i k \pm i w\Big)}{2\pi^2 \Gamma(l)}\cdot \frac{(2 z)^l}{l!} 
\cos(w\beta)\d w,
\end{equation}
if we interpret $l=0$ term as delta function $\delta(\omega-k)$.
\end{rmrk}
We will deduce it from the following two lemmas.

\begin{lmm}\label{lmmGammaFourier}
For all $k, w\in \R$ and $l>0$ we have
\begin{equation}
\frac{\Gamma(l\pm ik \pm iw)}{\Gamma^2(2l)} = \frac{1}{2}
\int_{\R}\int_{\R} (e^{a/2}+e^{-a/2}+e^{b/2}+e^{-b/2})^{-2l} e^{ika+iwb}\, \d a\, \d b.
\end{equation}
\end{lmm}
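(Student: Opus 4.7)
The plan is to reduce this to the classical one-dimensional Beta function identity applied twice, together with a clever change of variables that makes the hyperbolic product-to-sum identity do all the work.

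First, I would establish (or recall) the one-variable identity
\begin{equation}
\frac{\Gamma(l+ix)\Gamma(l-ix)}{\Gamma(2l)} = \int_{\R} \frac{e^{ixt}}{(2\cosh(t/2))^{2l}}\, \d t,
\end{equation}
which follows by the substitution $u = (1+e^{-t})^{-1}$, turning the right-hand side into the Euler Beta integral $B(l+ix,l-ix)$. Since the definition $\Gamma(l\pm ik\pm iw)$ factors as a product of two such pairs, $\big[\Gamma(l+i(k+w))\Gamma(l-i(k+w))\big]\cdot \big[\Gamma(l+i(k-w))\Gamma(l-i(k-w))\big]$, applying the one-variable identity once with $x=k+w$ and once with $x=k-w$ gives
\begin{equation}
\frac{\Gamma(l\pm ik\pm iw)}{\Gamma^2(2l)} = \int_{\R}\!\!\int_{\R} \frac{e^{i(k+w)t + i(k-w)s}}{(2\cosh(t/2))^{2l}(2\cosh(s/2))^{2l}}\, \d t\, \d s.
\end{equation}

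Next I would change variables $a = t+s$, $b = t-s$, so that $t = (a+b)/2$, $s=(a-b)/2$, the Jacobian contributes a factor $1/2$, and the exponent becomes precisely $ika + iwb$. The key algebraic step is the identity
\begin{equation}
2\cosh\!\Big(\tfrac{a+b}{4}\Big)\cosh\!\Big(\tfrac{a-b}{4}\Big) = \cosh(a/2) + \cosh(b/2),
\end{equation}
a special case of $2\cosh(x)\cosh(y) = \cosh(x+y)+\cosh(x-y)$. Raising to the $2l$-th power turns the denominator into exactly $(e^{a/2}+e^{-a/2}+e^{b/2}+e^{-b/2})^{2l}$, which delivers the claimed formula.

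There is no serious obstacle: the integrals converge absolutely for $l>0$ because the integrand decays like $e^{-l|t|}e^{-l|s|}$, so Fubini is automatic throughout. The only step requiring a moment of thought is spotting the change of variables $(t,s)\mapsto (a,b)$ which converts the product of two hyperbolic cosines into the sum appearing in the statement; once this is observed, the proof is essentially mechanical.
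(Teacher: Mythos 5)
Your proof is correct, and all the individual steps check out: the substitution $u=(1+e^{-t})^{-1}$ does reduce $\int_{\R}(2\cosh(t/2))^{-2l}e^{ixt}\,\d t$ to the Euler Beta integral $B(l+ix,l-ix)$, the grouping of the four Gamma factors into the conjugate pairs with arguments $k+w$ and $k-w$ is the right one, the Jacobian of $(t,s)\mapsto(a,b)=(t+s,t-s)$ produces the factor $\tfrac12$, and the product-to-sum identity for $\cosh$ converts the denominator exactly into $(e^{a/2}+e^{-a/2}+e^{b/2}+e^{-b/2})^{2l}$; absolute convergence for $l>0$ justifies Fubini. The route differs from the paper's in how the intermediate factorized identity is reached: the paper does not invoke the Beta integral, but instead writes each of the four Gamma factors as $\int_{\R}e^{u(l+ix)}e^{-e^u}\,\d u$, multiplies to get a four-dimensional integral, and integrates out two of the variables via $\int_{\R}\exp\{-y\,e^{x}\}e^{\lambda x}\,\d x=\Gamma(\lambda)y^{-\lambda}$, which produces the $\Gamma^2(2l)$ and lands on precisely the product of the two one-dimensional hyperbolic-secant Fourier transforms that you obtain directly from the Beta identity; from that point on, both proofs finish with the same product-to-sum manipulation and linear change of variables. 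Your version is more modular and arguably more transparent, since it isolates the classical one-variable identity and applies it twice; the paper's version is self-contained, in effect re-deriving that identity inside the four-fold Gamma integral rather than quoting it.
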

\begin{proof}
We start by writing
\begin{equation}
\Gamma(l+ix) = 
\int_0^{\infty} t^{ix+l-1}e^{-t}\d t 
= \int_{-\infty}^{\infty}e^{u(l+ix)} e^{-e^u}\d u.
\end{equation}
Note that the integrand on the right-hand side decays exponentially as $u\to \pm \infty$. 
Applying the equation above for $x \in \{k +w, k -w, -k +w, -k -w\}$ and multiplying them we obtain 
\begin{multline}
\Gamma(l\pm ik \pm iw) 
=
\int_{\R^4} \exp\Big\{-e^{u_1}-e^{u_2}-e^{u_3}-e^{u_4}\Big\}\\
\times\exp\Big\{l(u_1+u_2+u_3+u_4)+ik(u_1+u_2-u_3-u_4)+iw(u_1-u_2+u_3-u_4)\Big\} \d u_1 \d u_2 \d u_3 \d u_4 .
\end{multline}
Changing the variables
\begin{align}
v_1 &= \frac{1}{2} (u_1+u_4), &v_2 = \frac{1}{2} (u_1-u_4), \\
v_3 &= \frac{1}{2} (u_2+u_3), &v_4 = \frac{1}{2} (u_2-u_3), \\
\end{align}
we obtain
\begin{multline}
\Gamma(l\pm ik \pm iw) 
=
4\int_{\R^4} \exp\Big\{-e^{v_1+v_2}-e^{v_1-v_2}-e^{v_3+v_4}-e^{v_3-v_4}\Big\}\\
\times \exp\Big\{2l (v_1+v_3)+2ik (v_2+v_4) + 2iw(v_2-v_4)\Big\}\d v_1 \d v_2 \d v_3 \d v_4.
\end{multline}
Next, we integrate out $v_1$ and $v_3$ using an equality
\begin{equation}
\int_{\R}\exp\big\{-y\,e^{x}\big\}e^{\lambda\, x}\d x =\Gamma(\lambda) y^{-\lambda},
\end{equation}
and get
\begin{multline}
\frac{\Gamma(l\pm ik \pm iw)}{\Gamma^2(2l)}
= 4 \int_{\R^2} \big(e^{v_2}+e^{-v_2}\big)^{-2l}\big(e^{v_4}+e^{-v_4}\big)^{-2l}
\exp\Big\{2ik (v_2+v_4) + 2iw(v_2-v_4)\Big\}
\d v_2\d v_4\\
= 
4 \int_{\R^2} \big(e^{v_2+v_4}+e^{-v_2-v_4}+e^{v_2-v_4}+e^{v_4-v_2}\big)^{-2l}
\exp\Big\{2ik (v_2+v_4) + 2iw(v_2-v_4)\Big\}
\d v_2\d v_4.
\end{multline}
Changing the variables again 
\begin{equation}
a = 2(v_2+v_4),\qquad
b = 2(v_2-v_4),
\end{equation}
we get the desired claim.
\end{proof}

\begin{lmm} \label{lmmSinhRatioFourier}
For all $p, q\in \Compl$ such that $|p|<\pi, |q|<\pi$ we have
\begin{equation}
\frac{\cos(p \omega)-\cos(q \omega)}{\sinh(\pi \omega)} 
= \frac{i}{2\pi}\int_{\R} 
\frac{\sinh(x)\Big(\cosh(p)-\cosh(q)\Big)}{\Big(\cosh(q)+\cosh(x)\Big)\Big(\cosh(p)+\cosh(x)\Big)} 
e^{i x \omega}\d x.
\end{equation}
Notice that the integrand on the right-hand side is exponentially decaying as $x\to \pm\infty$.
\end{lmm}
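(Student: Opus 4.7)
The plan is to compute the Fourier integral on the right-hand side by contour integration and match it to the left-hand side. First I would perform partial fractions on the integrand, writing
\[
F(x) := \frac{i\sinh(x)\bigl(\cosh(p) - \cosh(q)\bigr)}{\bigl(\cosh(q) + \cosh(x)\bigr)\bigl(\cosh(p) + \cosh(x)\bigr)} = i\sinh(x)\left(\frac{1}{\cosh(x) + \cosh(q)} - \frac{1}{\cosh(x) + \cosh(p)}\right),
\]
so that the leading-order behaviour $\sinh(x)/\cosh(x) \sim \pm 1$ cancels between the two terms. A short expansion of each fraction in $e^{-|x|}$ then shows $F(x) = O(e^{-|x|})$ as $|\mathrm{Re}(x)| \to \infty$ in any horizontal strip, so the integral is absolutely convergent. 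As a meromorphic function, $F$ has only simple poles, located at $x = (2k{+}1)i\pi \pm p$ and $x = (2k{+}1)i\pi \pm q$ for $k \in \mathbb{Z}$; the hypothesis $|p|,|q|<\pi$ keeps them off the real axis.

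For $\omega > 0$ I would close the contour by a large semicircle in the upper half-plane; the exponential decay of $F$ in $|\mathrm{Re}(x)|$ combined with $|e^{i\omega x}| \leq 1$ on the upper half-plane makes the arc contribution vanish as the radius tends to infinity. At $x_0 = (2k{+}1)i\pi + p$, using $\cosh(x_0) = -\cosh(p)$ and $\tfrac{d}{dx}\bigl(\cosh(x)+\cosh(p)\bigr)\big|_{x_0} = \sinh(x_0)$, the $\sinh(x_0)$ in numerator and denominator cancels, and the cross-factor $\cosh(x_0) + \cosh(q) = \cosh(q) - \cosh(p)$ cancels the $\cosh(p) - \cosh(q)$ in the numerator up to a sign, yielding the residue $-i\,e^{-(2k+1)\pi\omega + ip\omega}$. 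The three companion poles contribute $-i\,e^{-(2k+1)\pi\omega - ip\omega}$, $+i\,e^{-(2k+1)\pi\omega + iq\omega}$, and $+i\,e^{-(2k+1)\pi\omega - iq\omega}$ (the $q$-residues flipping sign because their cross-factor is now $\cosh(p) - \cosh(q)$, in phase with the numerator).

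Summing the four residues at level $2k{+}1$ gives $2i\,e^{-(2k+1)\pi\omega}\bigl(\cos(q\omega) - \cos(p\omega)\bigr)$, and the geometric series $\sum_{k \geq 0} e^{-(2k+1)\pi\omega} = 1/(2\sinh(\pi\omega))$, combined with the residue-theorem prefactor $2\pi i$, produces $\tfrac{2\pi(\cos(p\omega) - \cos(q\omega))}{\sinh(\pi\omega)}$, matching the left-hand side after dividing by $2\pi$. The case $\omega < 0$ follows because both sides are odd in $\omega$ (the LHS by parity of $\cos$ and $\sinh$, the RHS because $F$ is odd in $x$); the case $\omega = 0$ is trivial since both sides vanish. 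Finally, since both sides are jointly analytic in $(p,q)$ on $\{|p|, |q| < \pi\}$ (the integrand has no real singularities there), the identity extends from real $p, q \in (-\pi,\pi)$ to the complex range stated by analytic continuation. The main obstacle I anticipate is simply the sign bookkeeping across the four families of residues; no step requires a genuinely new idea.
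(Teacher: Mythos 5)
Your proof is correct, but it takes a genuinely different route from the paper's. The paper never evaluates the integral directly: it quotes the tabulated Fourier pair for $\sinh(a\omega)/\sinh(b\omega)$ (Gradshteyn--Ryzhik 17.23.20), continues it analytically to imaginary $a$, writes $\cos(p\omega)-\cos(q\omega)=-2\sin\big(\tfrac{p+q}{2}\omega\big)\sin\big(\tfrac{p-q}{2}\omega\big)$, and then reduces the resulting shifted-$\cosh$ kernel to the stated one by product-to-sum identities. You instead compute the right-hand side from scratch by residues: the partial-fraction split, the pole lattice $(2k+1)i\pi\pm p$, $(2k+1)i\pi\pm q$, the cancellation of $\sinh(x_0)$ in each residue, the four signs, and the geometric series $\sum_{k\ge 0}e^{-(2k+1)\pi\omega}=1/\big(2\sinh(\pi\omega)\big)$ all check out, so in effect you reprove the table entry the paper cites; this buys a self-contained argument at the cost of contour bookkeeping, while the paper's route outsources the contour work to a standard reference and keeps only elementary trigonometric algebra. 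Two small points to tighten, neither fatal: (i) the arc estimate as stated is too quick --- on the central part of the arc, where $|\operatorname{Re}x|$ stays bounded and $F$ does not decay, the bound $|e^{i\omega x}|\le 1$ is not enough and you need the actual decay $|e^{i\omega x}|=e^{-\omega\operatorname{Im}x}$, together with radii chosen along a sequence such as $R_N=2\pi N$ (or rectangles with tops at height $2\pi N$) so that the contour stays a fixed distance from the poles, where $F$ is bounded by $2\pi i$-periodicity; (ii) the degenerate configurations ($p=\pm q$, or $p$ or $q$ equal to $0$, where poles coalesce) should be noted as being absorbed by the continuity/analytic-continuation step you already invoke at the end.
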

\begin{proof}
It is well-known (see, e.g. \citep[17.23.20]{GradshteynRyzhik}) that for $a, b\in\R$ with $|a|<|b|$ we have 
\begin{equation}\label{eqGradRyzhSinhFourier}
\frac{\sinh(a\omega)}{\sinh(b\omega)} = \frac{1}{2}\int_{\R} \frac{\sin(\pi a /b)}{b [\cos(\pi a/b)+\cosh(\pi x/b)]} e^{ix\omega }\d x.
\end{equation}
Notice that the formula above also holds true for $a\in \Compl$ with $|a|<|b|$, as both sides are analytic in $a$.

Now we write
\begin{equation}
\frac{\cos(p \omega)-\cos(q \omega)}{\sinh(\pi \omega)} = -\frac{2\sin\left(\frac{(p+q)}{2}\, \omega\right)\sin\left(\frac{(p-q)}{2}\, \omega\right)}{\sinh(\pi \omega)}.
\end{equation}
Thus, combining this with \eqref{eqGradRyzhSinhFourier} for $a = i(p+q)/2$ and $b =\pi$ we obtain
\begin{equation}
\frac{\cos(p \omega)-\cos(q \omega)}{\sinh(\pi \omega)} = \frac{i}{2\pi} 
\int_{\R}\left(\frac{\sinh\left(\frac{p+q}{2}\right)}{\cosh\left(\frac{p+q}{2}\right)+\cosh\left(x-\frac{p-q}{2}\right)}
-\frac{\sinh\left(\frac{p+q}{2}\right)}{\cosh\left(\frac{p+q}{2}\right)+\cosh\left(x+\frac{p-q}{2}\right)}
\right)e^{ix\omega} \d x.
\end{equation}
Moreover,
\begin{multline}
\frac{\sinh\left(\frac{p+q}{2}\right)}
{\cosh\left(\frac{p+q}{2}\right)+\cosh\left(x-\frac{p-q}{2}\right)}
-\frac{\sinh\left(\frac{p+q}{2}\right)}
{\cosh\left(\frac{p+q}{2}\right)+\cosh\left(x+\frac{p-q}{2}\right)} \\
= \frac{\sinh\left(\frac{p+q}{2}\right)\Big(\cosh\left(x+\frac{p-q}{2}\right)-\cosh\left(x-\frac{p-q}{2}\right)\Big)}
{\Big(\cosh\left(\frac{p+q}{2}\right)+\cosh\left(x-\frac{p-q}{2}\right)\Big)
\Big(\cosh\left(\frac{p+q}{2}\right)+\cosh\left(x+\frac{p-q}{2}\right)\Big)}.
\end{multline}
It is easy to show that the numerator on the right-hand side is equal to $\sinh(x) \left(\cosh(p)-\cosh(q)\right),$
and the denominator equals $\left(\cosh(q)+\cosh(x)\right)\left(\cosh(p)+\cosh(x)\right),$
which finishes the proof.
\end{proof}

\begin{proof}[Proof of Proposition \ref{prpKeyArccoshExpansion}]
Using Lemma \ref{lmmSinhRatioFourier} we get
\begin{multline}\label{eqLemmaCosAtccoshFirstFourier}
\frac{\cos\Big(2k\cdot \arccosh\left[\cosh(\beta/2)-z\right]\Big)-\cos(k\beta)}{\sinh(2\pi k)} \\
= \frac{-i}{2\pi}\int_{\R} \frac{z\sinh(x)}{\big(\cosh(x)+\cosh(\beta/2)\big)\big(\cosh(x)+\cosh(\beta/2)-z\big)}e^{2 i x k}\d x.
\end{multline}
Now we Taylor expand in $z$
\begin{equation}
\frac{1}{\cosh(x)+\cosh(\beta/2)-z} 
= \sum_{l=0}^{\infty} \big(\cosh(x)+\cosh(\beta/2)\big)^{-l-1} z^l,
\end{equation}
thus
\begin{equation}
\frac{z\sinh(x)}{\big(\cosh(x)+\cosh(\beta/2)\big)\big(\cosh(x)+\cosh(\beta/2)-z\big)} 
= \sinh(x)\sum_{l=1}^{\infty}\big(\cosh(x)+\cosh(\beta/2)\big)^{-l-1} z^l.
\end{equation}

Differentiating by parts and using Lemma \ref{lmmGammaFourier} we obtain that for any $l\geq 1$,
\begin{multline}\label{eqStmCosArcTermFourier}
\int_{\R}\sinh(x)\big(\cosh(x)+\cosh(\beta/2)\big)^{-l-1}e^{2 i x k} \d x 
= \frac{ik 2^{l+1}}{l} \int_{\R} \Big(e^{x}+e^{-x}+e^{\beta/2}+e^{-\beta/2}\Big)^{-l}e^{2 i x k} \d x \\
= \frac{ik 2^{l}}{\pi l}  \int_{\R} \frac{\Gamma\big(\frac{l}{2}\pm i k\pm i w \big)}{\Gamma^2(l)} e^{iw\beta} \d w.
\end{multline}

In particular, taking $\beta = 0$,
\begin{equation}
\frac{k 2^{l}}{\pi l}  \int_{\R} \frac{\Gamma\big(\frac{l}{2}\pm i k\pm i w\big)}{\Gamma^2(l)} \d w 
\leq \int_{\R}\left|\sinh(x)\right|\big(\cosh(x)+\cosh(\beta/2)\big)^{-l-1} \d x,
\end{equation}
and thus, summing over $l$ we obtain for any $|z|<2$ 
\begin{equation}\label{eqStmCosArcAbsConv}
\frac{k}{\pi} \sum_{l=1}^{\infty} 
\int_{\R} \frac{\Gamma\big(\frac{l}{2}\pm i k\pm i w\big)}{\Gamma(l)}\cdot \frac{(2|z|)^l}{l!} \d w
\leq
\int_{\R} \frac{\left|z\sinh(x)\right|}{\big(\cosh(x)+\cosh(\beta/2)\big)\big(\cosh(x)+\cosh(\beta/2)-|z|\big)}\d x <\infty.
\end{equation}

Now using \eqref{eqStmCosArcTermFourier} again and summing over $l$ we obtain
\begin{multline}
\frac{-i}{2\pi}\int_{\R} \frac{z\sinh(x)}{\big(\cosh(x)+\cosh(\beta/2)\big)\big(\cosh(x)+\cosh(\beta/2)-z\big)}e^{2ix k}\d x \\
= \frac{k}{2\pi^2} \sum_{l=1}^{\infty} 
\int_{\R} \frac{\Gamma\big(\frac{l}{2}\pm i k\pm i w\big)}{\Gamma(l)}\cdot \frac{(2z)^l}{l!} e^{iw\beta} \d w,
\end{multline}
where the expression on the right-hand side converges absolutely because of \eqref{eqStmCosArcAbsConv}.
This, together with \eqref{eqLemmaCosAtccoshFirstFourier} finishes proof.
\end{proof}

\section{Appendix}
\label{sect_Appendix}

Lemmas \ref{lmmExpMomentAprioriBound} and \ref{lemmaAnalyticContinuationExponentialMoments} have already appeared in \citep*{BLW}. 
For the reader's convenience we recall them together with their proofs.

\begin{lmm}[\citep*{BLW}]\label{lmmExpMomentAprioriBound}
Fix $\sigma, a, T>0$.
Then there exists $\eps>0$ such that
\begin{equation}
\int \exp\left(\frac{\eps}{\int_0^T e^{\xi(\tau)}\d\tau} \right)\d\WS{\sigma^2}{a}{T}(\xi)<\infty.
\end{equation}
\end{lmm}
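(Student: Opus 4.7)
The plan is to establish the tail estimate
$$\WS{\sigma^2}{a}{T}\Big(\Big\{\int_0^T e^{\xi(\tau)}\,\d\tau < \delta\Big\}\Big) \leq C\, e^{-c/\delta} \qquad \text{for small } \delta > 0,$$
with $c = c(\sigma, T, a) > 0$, from which the lemma follows by a layer-cake argument applied to $\exp(\eps/\int_0^T e^{\xi}\,\d\tau)$: for $\eps < c$ the resulting integral $\int_0^\infty \eps\, e^{\eps M}\, \WS{\sigma^2}{a}{T}(\{\int_0^T e^\xi\,\d\tau < 1/M\})\,\d M$ is finite.

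First I would reduce the event $\{\int_0^T e^\xi\,\d\tau < \delta\}$ to a control on $\min_{[0,s]} \xi$ for a short initial segment. Since
$$\int_0^T e^{\xi(\tau)}\,\d\tau \;\geq\; \int_0^s e^{\xi(\tau)}\,\d\tau \;\geq\; s\exp\!\Big(\min_{[0,s]} \xi\Big),$$
one has $\{\int_0^T e^\xi\,\d\tau < \delta\} \subseteq \{\min_{[0,s]} \xi \leq -L\}$ for every $L \geq \log(s/\delta)$.

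Second, I would estimate $\WS{\sigma^2}{a}{T}(\{\min_{[0,s]} \xi \leq -L\})$ by splitting $\xi = \xi_1 \sqcup \xi_2$ at time $s$ via Proposition \ref{prop:BBcomposition}, so that the event depends only on $\xi_1$. The classical reflection principle for the Brownian bridge gives
$$\int \indic_{\{\min_{[0,s]} \xi_1 \leq -L\}}\,\d\WS{\sigma^2}{b}{s}(\xi_1) = \frac{1}{\sqrt{2\pi s}\,\sigma}\exp\!\Big(-\frac{(b+2L)^2}{2s\sigma^2}\Big) \qquad (b \geq -L),$$
and equals the full unnormalised mass $\frac{1}{\sqrt{2\pi s}\,\sigma}\exp(-b^2/(2s\sigma^2))$ for $b \leq -L$. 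Integrating in $b$ against $\d\WS{\sigma^2}{a-b}{T-s}$ and completing the Gaussian convolution (handling the $b \leq -L$ tail via the standard Gaussian estimate) should yield
$$\WS{\sigma^2}{a}{T}\Big(\{\min_{[0,s]} \xi \leq -L\}\Big) \leq C\,\exp\!\Big(-\frac{L^2}{2s\sigma^2} + O(L)\Big) \qquad (s \leq T/2),$$
with constants depending only on $\sigma$, $T$ and $a$. Combining the two steps and choosing $s = e^2\delta$ (so that $L = \log(s/\delta) = 2$) gives the desired bound $\leq C\, e^{-c/\delta}$.

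The main obstacle is the second step: care is required so that the dominant decay is of the form $e^{-L^2/(2s\sigma^2)}$, driven by the short segment $[0,s]$, rather than the much weaker $e^{-cL^2/T}$ one obtains from a crude endpoint estimate that forgets the localisation to $[0,s]$. It is precisely this $1/s$ scaling in the exponent that, after optimisation in $s$, produces the linear-in-$1/\delta$ decay required to ensure exponential moments of $1/\int_0^T e^\xi\,\d\tau$.
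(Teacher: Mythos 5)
Your proposal is correct and follows essentially the same route as the paper: the paper's proof likewise reduces the event $\bigl\{\int_0^T e^{\xi(\tau)}\,\d\tau < \delta\bigr\}$ to the Brownian bridge dropping below a fixed level on an initial window of length proportional to $\delta$, and then uses the Gaussian small-time tail to obtain decay of order $e^{-c/\delta}$, which gives the exponential moment for small $\eps$. (One minor slip: in your first step the containment requires $L \le \log(s/\delta)$ rather than $L \ge \log(s/\delta)$; since you ultimately take $L = \log(s/\delta)$ exactly, this does not affect the argument.)
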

\begin{proof}

Let $\widetilde{\xi}$ be a Brownian bridge distributed according to the probability measure $\sqrt{2\pi T}\sigma\exp\left(\frac{a^2}{2T \sigma^2}\right) \d \WS{\sigma^2}{a}{T}(\widetilde{\xi})$.
Then 
\begin{equation} 
\Prob\left[
\frac{1}{\int_0^{T} e^{\widetilde{\xi}(\tau)}\d \tau} > \lambda
\right] 
\leq 
\Prob\left[
\min_{t\in[0, e \lambda^{-1})]} \widetilde{\xi}(t)<-1
\right]
\leq C^{-1} e^{-C\lambda} ,
\end{equation}
for some $C>0$, independent of $\lambda>10/T$. 
\end{proof}

\begin{lmm}[\citep*{BLW}]\label{lemmaAnalyticContinuationExponentialMoments}
Let $\TechMeas$ be a non-negative measure on $\R_+$. 
Assume that there exists $\eps>0$ such that the exponential moment generating function $F(z) = \int \exp \left(z X\right)\d\TechMeas(X)$ exists for all $z\in [0, \eps)$. 
Assume further that for some $R>0$, $F(z)$ can be analytically continued for all $z\in \D_R$. 
Then $\int \exp \left(z X\right)\d\TechMeas(X)$ converges absolutely for all $z\in \D_R$, and is equal to the analytic continuation of $F(z)$ to $\D_R$.
\end{lmm}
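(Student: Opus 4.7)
The plan is to combine a Taylor expansion of $F$ at $0$ with the Vivanti--Pringsheim theorem on power series with non-negative coefficients. First, for real $z\in[0,\eps)$ the non-negativity of $\TechMeas$ and of $X$ lets us apply Tonelli's theorem to expand
\begin{equation}
F(z)=\int\sum_{n=0}^{\infty}\frac{(zX)^n}{n!}\,\d\TechMeas(X)=\sum_{n=0}^{\infty}a_n z^n,\qquad a_n:=\frac{1}{n!}\int X^n\,\d\TechMeas(X)\ge 0.
\end{equation}
In particular the power series $\sum a_n z^n$ has positive radius of convergence $\rho\ge\eps$, and on $[0,\rho)$ it agrees with the finite integral $F(z)$.

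Next, I would invoke Vivanti--Pringsheim: a power series with non-negative real coefficients and radius of convergence $\rho<\infty$ must have a singularity at the real point $z=\rho$. Since $F$ is assumed to extend analytically to the whole disk $\D_R$, no such singularity can lie inside $\D_R$, so necessarily $\rho\ge R$. Hence $\sum_n a_n|z|^n<\infty$ for every $z\in\D_R$.

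With absolute convergence of the series in hand, Tonelli applied once more gives
\begin{equation}
\int e^{|z|X}\,\d\TechMeas(X)=\sum_{n=0}^{\infty}a_n|z|^n<\infty,
\end{equation}
so $\int e^{zX}\,\d\TechMeas(X)$ converges absolutely on $\D_R$. Standard arguments (differentiation under the integral, justified by the above absolute convergence together with dominated convergence on any compact subdisk of $\D_R$) show that this integral defines a holomorphic function of $z\in\D_R$. Finally, this holomorphic function coincides with $F$ on the real interval $[0,\eps)$, so by the identity principle it coincides with the analytic continuation of $F$ throughout $\D_R$.

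The main obstacle — and the only non-elementary input — is the Vivanti--Pringsheim step: without it, one cannot rule out that the Taylor series of $F$ at $0$ has radius strictly smaller than $R$ while $F$ itself nevertheless continues analytically farther by some other means. The non-negativity of the coefficients $a_n$ is precisely what prevents this, so the proof really hinges on exploiting that $\TechMeas\ge 0$ and $X\ge 0$ give non-negative Taylor coefficients.
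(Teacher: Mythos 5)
Your proof is correct and follows the same skeleton as the paper's: expand $F$ on $[0,\eps)$ into the series $\sum_n a_n z^n$ with $a_n=\frac{1}{n!}\int X^n\,\d\TechMeas(X)\ge 0$ via Tonelli, argue that this series converges absolutely on all of $\D_R$, and then use Tonelli/Fubini once more to identify its sum with $\int e^{zX}\,\d\TechMeas(X)$ and with the continuation of $F$. The one place you diverge is the middle step: the paper obtains convergence of $\sum_n a_n z^n$ on $\D_R$ from the elementary fact that the Taylor series at $0$ of a function holomorphic on $\D_R$ converges on the whole of $\D_R$ (after noting, via the identity principle on $\D_{\eps}$, that the $a_n$ are exactly the Taylor coefficients of the continuation), whereas you invoke Vivanti--Pringsheim. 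Your route is valid, but your closing claim --- that without Pringsheim one could not rule out the Taylor series of $F$ at $0$ having radius strictly smaller than $R$ --- is mistaken: the radius of convergence of the Taylor series at the centre of a disk of holomorphy is always at least the radius of that disk, by Cauchy's integral formula, irrespective of the signs of the coefficients. Positivity of $\TechMeas$ and of $X$ is genuinely needed, but only for the measure-theoretic interchanges (Tonelli to expand $F$ on $[0,\eps)$, and the final deduction that $\sum_n a_n|z|^n<\infty$ forces $\int e^{|z|X}\,\d\TechMeas(X)<\infty$), not for the complex-analytic step. So Pringsheim buys nothing here beyond the elementary argument; it would only become relevant if one wanted to locate the first singularity of the moment generating function on the positive real axis, rather than work inside a disk where analyticity is assumed.
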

\begin{proof}
Since $\TechMeas$ supported on $\R_+$, by Fubini's Theorem,
\begin{equation}
F(z) = \sum_{n\geq 0} z^n \frac{\int X^n \d\TechMeas(X)}{n!},
\end{equation}
for $z\in [0, \eps)$. Given that $F(z)$ is analytic in $\D_R$, we conclude that the right-hand side converges absolutely for $z\in \D_R$. Thus, by Fubini's Theorem again, $\E \exp \left(z X\right)$ converges for $z\in [0,R)$ and is equal to $F(z)$. We can continue the equality for the whole disk $\D_R$, since $|z^n X^n|\leq |z|^n X^n$, and all expressions are absolutely convergent in $\D_R$.
\end{proof}

\begin{stm}\label{stmSinRatioUniformBound}
For any $\delta\in (0, 1/10)$, 
if $x\in (0, 1-\delta)$, then for any $\alpha \in (\frac{9\pi}{10}, \pi)$ we have
\begin{equation}
\left|\log \left(\frac{\sin(\pi x)}{\sin (\alpha x)}\right) \right|
\leq  \frac{\pi-\alpha}{\sin (\pi\delta/2)}.
\end{equation}
\end{stm}
\begin{proof}
Since $ \sin(y)/y$ is decreasing for $y\in [0, \pi]$, we get that
\begin{equation}
\left|\frac{\d}{\d \alpha}\log\big(\sin (\alpha x)\big) \right|
= \frac{x\left|\cos(\alpha x)\right|}{\sin(\alpha x)}
\leq \frac{\alpha(1-\delta)}{\alpha\sin(\alpha(1-\delta))}
\leq
 \frac{1}{\sin (\pi\delta/2)}.
\end{equation}
\end{proof}

\begin{stm}\label{stmArccoshOfCoshLinearBound}
For any $x\in \R$ we have
\begin{equation}
\arccosh^2\left[
\cosh(x)-2
\right] > x^2-1000.
\end{equation}
\end{stm}
\begin{proof} Since $\arccosh^2\left[
\cosh(x)-2\right]$ is even, it is sufficient to prove the inequality only for $x\geq 0$

For $x\in[0, 10]$ we see that 
\begin{equation}
\arccosh^2\left[
\cosh(x)-2\right]\geq -\pi^2 >x^2-1000.
\end{equation}

For $x\in (10, \infty)$ we have
\begin{equation}
\cosh(x)-2 > \cosh\left(x-\frac{3}{x}\right), 
\end{equation}
since 
\begin{equation}
\frac{\d}{\d y}\cosh(y) = \sinh(y)>y 
\end{equation}
for $y>5$.
Therefore,
\begin{equation}
\arccosh^2\left[
\cosh(x)-2
\right] 
> \left(x-\frac{3}{x}\right)^2 >x^2-10.
\end{equation}
\end{proof}

\begin{stm}\label{stmArccoshDef}
Function $\arccosh^2(z)$ can be analytically continued from $z\in [1, \infty)$ to $z \in \Dom := \left\{z\in\Compl \, \Big| \, \Real z\geq -1 \right\}$. 
Moreover, in this continuation $\arccosh^2(z) = -\arccos^2(z)$ for $z \in (-1, 1)$.
\end{stm}
\begin{proof}
Let $f(z)= \left(z+\frac{1}{z}\right)/2$, and $g(z) = e^z$. 
Notice that $\arccosh^2(z) = \Big[\big(g^{-1} \circ f^{-1}\big)(z)\Big]^2$.

It is easy to see that $f$ is a double cover from $\Compl\backslash (-\infty, 0]$ to $\Compl\backslash (-\infty, -1]$ with only critical point $z=1$, and $g$ is a bijection between $\left\{z\in\Compl \, \Big| \, -\pi < \Im z < \pi \right\}$ and $\Compl\backslash (-\infty, 0]$ with $0$ being mapped to $1$.

Observation that $\big(f \circ g\big)(-z) = \big(f \circ g\big)(z)$ finishes the proof of the first part.

The equality $\arccosh^2(z) = -\arccos^2(z)$ for $z \in (-1, 1)$ follows from the fact that $\omega = i \arccos(z)$ and $\omega = -i \arccos(z)$ are the only solutions of $\cosh(\omega) = z$ in the domain $\left\{\omega\in\Compl \, \Big| \, -\pi < \Im \omega < \pi \right\}$.
\end{proof}

\section*{Acknowledgements}
The author thanks Roland Bauerschmidt, James Norris and Peter Wildemann for many helpful discussions and careful proofreading.

\bibliography{Schwarzian}

\begin{thebibliography}{40}
\providecommand{\natexlab}[1]{#1}
\providecommand{\url}[1]{\texttt{#1}}
\expandafter\ifx\csname urlstyle\endcsname\relax
  \providecommand{\doi}[1]{doi: #1}\else
  \providecommand{\doi}{doi: \begingroup \urlstyle{rm}\Url}\fi

\bibitem[Alekseev and Shatashvili(1989)]{AlekseevShatashvili}
A.~Alekseev and S.~Shatashvili.
\newblock Path integral quantization of the coadjoint orbits of the {V}irasoro
  group and {$2$}-d gravity.
\newblock \emph{Nuclear Phys. B}, 323\penalty0 (3):\penalty0 719--733, 1989.
\newblock ISSN 0550-3213,1873-1562.

\bibitem[Alekseev and Shatashvili(2021)]{AlekseevShatashvili2}
A.~Alekseev and S.~L. Shatashvili.
\newblock Characters, coadjoint orbits and {D}uistermaat-{H}eckman integrals.
\newblock \emph{J. Geom. Phys.}, 170:\penalty0 Paper No. 104386, 20, 2021.
\newblock ISSN 0393-0440,1879-1662.

\bibitem[Alekseev et~al.(2024)Alekseev, Chekeres, and
  Youmans]{AlekseevBosonization}
A.~Alekseev, O.~Chekeres, and D.~R. Youmans.
\newblock {Towards Bosonization of {V}irasoro Coadjoint Orbits}.
\newblock \emph{Annales Henri Poincare}, 25\penalty0 (1):\penalty0 5--34, 2024.

\bibitem[Bagrets et~al.(2016)Bagrets, Altland, and Kamenev]{Bagrets_SYK_as_LQG}
D.~Bagrets, A.~Altland, and A.~Kamenev.
\newblock {S}achdev–{Y}e–{K}itaev model as {L}iouville quantum mechanics.
\newblock \emph{Nuclear Physics B}, 911:\penalty0 191--205, 2016.
\newblock ISSN 0550-3213.

\bibitem[Bauerschmidt et~al.(2024)Bauerschmidt, Losev, and Wildemann]{BLW}
R.~Bauerschmidt, I.~Losev, and P.~Wildemann.
\newblock Probabilistic {D}efinition of the {S}chwarzian {F}ield {T}heory.
\newblock 2024.
\newblock Preprint.

\bibitem[Belokurov and
  Shavgulidze(2017)]{BelokurovShavgulidzeExactSolutionSchwarz}
V.~V. Belokurov and E.~T. Shavgulidze.
\newblock Exact solution of the {S}chwarzian theory.
\newblock \emph{Phys. Rev. D}, 96:\penalty0 101701, Nov 2017.

\bibitem[Belokurov and Shavgulidze(2018{\natexlab{a}})]{BelokurovShavgulidze3}
V.~V. Belokurov and E.~T. Shavgulidze.
\newblock Unusual view of the {S}chwarzian theory.
\newblock \emph{Modern Phys. Lett. A}, 33\penalty0 (37):\penalty0 1850221, 7,
  2018{\natexlab{a}}.

\bibitem[Belokurov and
  Shavgulidze(2018{\natexlab{b}})]{BelokurovShavgulidzeCorrelationFunctionsSchwarz}
V.~V. Belokurov and E.~T. Shavgulidze.
\newblock Correlation functions in the {S}chwarzian theory.
\newblock \emph{Journal of High Energy Physics}, 11:\penalty0 036, Nov
  2018{\natexlab{b}}.

\bibitem[Belokurov and Shavgulidze(2019)]{BelokurovShavgulidze4}
V.~V. Belokurov and E.~T. Shavgulidze.
\newblock Polar decomposition of the {W}iener measure: {S}chwarzian theory
  versus conformal quantum mechanics.
\newblock \emph{Theoretical and Mathematical Physics}, 200\penalty0
  (3):\penalty0 1324--1334, 2019.

\bibitem[Blommaert et~al.(2018)Blommaert, Mertens, and
  Verschelde]{Schwarzian_Wilson_Line}
A.~Blommaert, T.~G. Mertens, and H.~Verschelde.
\newblock {The {S}chwarzian {T}heory - A {W}ilson Line Perspective}.
\newblock \emph{JHEP}, 12:\penalty0 022, 2018.

\bibitem[Bogachev(2010)]{BogachevMalliavin}
V.~I. Bogachev.
\newblock \emph{Differentiable measures and the {M}alliavin calculus}, volume
  164 of \emph{Mathematical Surveys and Monographs}.
\newblock American Mathematical Society, Providence, RI, 2010.
\newblock ISBN 978-0-8218-4993-4.

\bibitem[Chandra et~al.(2020)Chandra, Chevyrev, Hairer, and
  Shen]{Chandra_Langevin}
A.~R. Chandra, I.~Chevyrev, M.~Hairer, and H.~Shen.
\newblock Langevin dynamic for the 2d yang–mills measure.
\newblock \emph{Publications math{\'e}matiques de l'IH{\'E}S}, 136:\penalty0 1
  -- 147, 2020.

\bibitem[Chevyrev(2018)]{Chevyrev_Yang_Mills}
I.~Chevyrev.
\newblock Yang–{M}ills measure on the two-dimensional torus as a random
  distribution.
\newblock \emph{Communications in Mathematical Physics}, 372:\penalty0 1027 --
  1058, 2018.

\bibitem[Chevyrev and Shen(2023)]{Chevyrev_Yang_Mills_Invar}
I.~Chevyrev and H.~Shen.
\newblock Invariant measure and universality of the 2{D} {Y}ang-{M}ills
  {L}angevin dynamic.
\newblock 2023.
\newblock Preprint, arxiv:2302.12160.

\bibitem[Dahlqvist and Norris(2017)]{Dahlqvist_Yang_Mills_Master_Field}
A.~Dahlqvist and J.~R. Norris.
\newblock {Y}ang–mills measure and the master field on the sphere.
\newblock \emph{Communications in Mathematical Physics}, 377:\penalty0
  1163--1226, 2017.

\bibitem[David et~al.(2016)David, Kupiainen, Rhodes, and
  Vargas]{LCFT_construction_sphere}
F.~David, A.~Kupiainen, R.~Rhodes, and V.~Vargas.
\newblock Liouville quantum gravity on the {R}iemann sphere.
\newblock \emph{Communications in Mathematical Physics}, 342, 03 2016.

\bibitem[Driver et~al.(2017)Driver, Gabriel, Hall, and Kemp]{Yang_Mills_MM2}
B.~Driver, F.~Gabriel, B.~Hall, and T.~Kemp.
\newblock The {M}akeenko–{M}igdal equation for {Y}ang–{M}ills theory on
  compact surfaces.
\newblock \emph{Communications in Mathematical Physics}, 352, 06 2017.

\bibitem[Driver et~al.(2016)Driver, Hall, and Kemp]{Yang_Mills_MM1}
B.~K. Driver, B.~C. Hall, and T.~Kemp.
\newblock Three proofs of the {M}akeenko–{M}igdal equation for
  {Y}ang–{M}ills theory on the plane.
\newblock \emph{Communications in Mathematical Physics}, 351:\penalty0 741 --
  774, 2016.

\bibitem[Gradshteyn and Ryzhik(2007)]{GradshteynRyzhik}
I.~S. Gradshteyn and I.~M. Ryzhik.
\newblock \emph{Table of integrals, series, and products}.
\newblock Elsevier/Academic Press, Amsterdam, seventh edition, 2007.
\newblock ISBN 978-0-12-373637-6; 0-12-373637-4.
\newblock Translated from the Russian, Translation edited and with a preface by
  Alan Jeffrey and Daniel Zwillinger.

\bibitem[Guillarmou et~al.(2019)Guillarmou, Rhodes, and
  Vargas]{LCFT_construction}
C.~Guillarmou, R.~Rhodes, and V.~Vargas.
\newblock Polyakov’s formulation of $2d$ bosonic string theory.
\newblock \emph{Publications mathématiques de l'IHÉS}, 130, 06 2019.

\bibitem[Guillarmou et~al.(2020)Guillarmou, Kupiainen, Rhodes, and
  Vargas]{LCFT_conformal_bootstrap}
C.~Guillarmou, A.~Kupiainen, R.~Rhodes, and V.~Vargas.
\newblock {Conformal bootstrap in {L}iouville {T}heory}.
\newblock 5 2020.
\newblock Preprint, arxiv:2005.11530.

\bibitem[Guillarmou et~al.(2021)Guillarmou, Kupiainen, Rhodes, and
  Vargas]{LCFT_Segal}
C.~Guillarmou, A.~Kupiainen, R.~Rhodes, and V.~Vargas.
\newblock Segal's axioms and bootstrap for {L}iouville theory.
\newblock 2021.
\newblock Preprint, arxiv:2112.14859.

\bibitem[Guillarmou et~al.(2024)Guillarmou, Kupiainen, and Rhodes]{LCFT_review}
C.~Guillarmou, A.~Kupiainen, and R.~Rhodes.
\newblock Review on the probabilistic construction and {C}onformal bootstrap in
  {L}iouville {T}heory.
\newblock 2024.
\newblock Preprint, arXiv:2403.12780.

\bibitem[Iliesiu et~al.(2019)Iliesiu, Pufu, Verlinde, and Wang]{JT_Wilson_Line}
L.~V. Iliesiu, S.~S. Pufu, H.~Verlinde, and Y.~Wang.
\newblock An exact quantization of {J}ackiw-{T}eitelboim gravity.
\newblock \emph{J. High Energy Phys.}, \penalty0 (11):\penalty0 091, 61, 2019.
\newblock ISSN 1126-6708,1029-8479.

\bibitem[Kitaev and Suh(2018)]{KitaevJosephine}
A.~Kitaev and S.~J. Suh.
\newblock The soft mode in the {S}achdev-{Y}e-{K}itaev model and its gravity
  dual.
\newblock \emph{J. High Energy Phys.}, \penalty0 (5):\penalty0 183, front
  matter+66, 2018.
\newblock ISSN 1126-6708.

\bibitem[Kupiainen and Oikarinen(2019)]{LCFT_Stress_Energy1}
A.~Kupiainen and J.~Oikarinen.
\newblock {Stress-Energy in {L}iouville {C}onformal {F}ield {T}heory}.
\newblock 11 2019.
\newblock Preprint, arxiv:1911.05359.

\bibitem[Kupiainen et~al.(2019)Kupiainen, Rhodes, and Vargas]{LCFT_Ward}
A.~Kupiainen, R.~Rhodes, and V.~Vargas.
\newblock Local conformal structure of {L}iouville quantum gravity.
\newblock \emph{Communications in Mathematical Physics}, 371, 11 2019.

\bibitem[Kupiainen et~al.(2020)Kupiainen, Rhodes, and Vargas]{DOZZ}
A.~Kupiainen, R.~Rhodes, and V.~Vargas.
\newblock Integrability of {L}iouville theory: proof of the {DOZZ} formula.
\newblock \emph{Ann. of Math. (2)}, 191\penalty0 (1):\penalty0 81--166, 2020.
\newblock ISSN 0003-486X.

\bibitem[L\'{e}vy(2008)]{Levy_Holonomy}
T.~L\'{e}vy.
\newblock Two-dimensional {M}arkovian holonomy fields.
\newblock \emph{Ast{\'e}risque}, 2008.

\bibitem[Levy(2018)]{Levy_Master_Field}
T.~Levy.
\newblock The master ﬁeld on the plane.
\newblock \emph{Ast{\'e}risque}, 2018.

\bibitem[Maldacena and Stanford(2016)]{MaldacenaStanford}
J.~Maldacena and D.~Stanford.
\newblock Remarks on the {S}achdev-{Y}e-{K}itaev model.
\newblock \emph{Phys. Rev. D}, 94\penalty0 (10):\penalty0 106002, 43, 2016.
\newblock ISSN 2470-0010,2470-0029.

\bibitem[Maldacena et~al.(2016)Maldacena, Stanford, and Yang]{NearlyAdS}
J.~Maldacena, D.~Stanford, and Z.~Yang.
\newblock {Conformal symmetry and its breaking in two-dimensional nearly
  anti-de Sitter space}.
\newblock \emph{Progress of Theoretical and Experimental Physics},
  2016\penalty0 (12):\penalty0 12C104, 11 2016.
\newblock ISSN 2050-3911.

\bibitem[Mertens et~al.(2017)Mertens, Turiaci, and
  Verlinde]{ConformalBootstrap}
T.~Mertens, G.~Turiaci, and H.~Verlinde.
\newblock Solving the {S}chwarzian via the {C}onformal {B}ootstrap.
\newblock \emph{Journal of High Energy Physics}, 2017, 05 2017.

\bibitem[Oikarinen(2022)]{LCFT_Stress_Energy2}
J.~Oikarinen.
\newblock Stress-energy in {L}iouville {C}onformal {F}ield {T}heory on compact
  riemann surfaces.
\newblock 2022.
\newblock Preprint, arxiv:2108.06767.

\bibitem[Saad et~al.(2018)Saad, Shenker, and Stanford]{SSS_ramp_SYK}
P.~Saad, S.~H. Shenker, and D.~Stanford.
\newblock A semiclassical ramp in {SYK} and in gravity.
\newblock 2018.
\newblock Preprint, arxiv:1806.06840.

\bibitem[Saad et~al.(2019)Saad, Shenker, and Stanford]{SaadShenkerStanford2019}
P.~Saad, S.~H. Shenker, and D.~Stanford.
\newblock {JT} gravity as a matrix integral.
\newblock Mar. 2019.
\newblock Preprint, arXiv:1903.11115.

\bibitem[Sengupta(1997)]{book_Yang_Mills}
A.~Sengupta.
\newblock \emph{Gauge theory on compact surfaces}.
\newblock 1997.
\newblock ISBN 978-1-4704-0185-6.

\bibitem[Stanford and Witten(2017)]{StanfordWittenFermionicLocalization}
D.~Stanford and E.~Witten.
\newblock {Fermionic Localization of the Schwarzian Theory}.
\newblock \emph{JHEP}, 10:\penalty0 008, 2017.

\bibitem[Stein and Weiss(1971)]{SteinWeiss}
E.~M. Stein and G.~Weiss.
\newblock \emph{Introduction to {F}ourier {A}nalysis on {E}uclidean {S}paces}.
\newblock Princeton University Press, 1971.
\newblock ISBN 9780691080789.

\bibitem[Sung and Wang(2024)]{Yilin_variation}
J.~Sung and Y.~Wang.
\newblock Quasiconformal deformation of the chordal {L}oewner driving function
  and first variation of the {L}oewner energy.
\newblock \emph{Mathematische Annalen}, pages 1--24, 04 2024.

\end{thebibliography}

\end{document}